\newtheorem{theorem}{Theorem}[section]
\newtheorem{lemma}[theorem]{Lemma}
\newtheorem{proposition}[theorem]{Proposition}
\newtheorem{corollary}[theorem]{Corollary}
\theoremstyle{definition}
\newtheorem{definition}[theorem]{Definition}
\newtheorem{example}[theorem]{Example}
\theoremstyle{remark}
\numberwithin{equation}{section}
\begin{document}
\title[Lip-Linear operators ]{Lip-Linear operators and their connection to
Lipschitz tensor products }
\author{Athmane Ferradi and Khalil Saadi}
\date{}
\dedicatory{Laboratoire d'Analyse Fonctionnelle et G\'{e}om\'{e}trie des
Espaces, Universit\'{e} de M'sila, Alg\'{e}rie.\\
Second address for the first author: Ecole Normale Sup\'{e}rieure de
Bousaada, Algeria\\
athmane.ferradi@univ-msila.dz\\
khalil.saadi@univ-msila.dz}

\begin{abstract}
The linear operators defined on the Lipschitz projective tensor product $X%
\widehat{\boxtimes }_{\pi }E$ motivate the study of a distinct class of
operators acting on the cartesian product $X\times E$. This class, denoted
by $LipL_{0}\left( X\times E;F\right) $, combines Lipschitz and linear
properties, forming an intermediate framework between bilinear and
two-Lipschitz operators. We establish an identification between this space
and $\mathcal{L}\left( X\widehat{\boxtimes }_{\pi }E;F\right) $, which also
connects it to the space of bilinear operators $\mathcal{B}\left( \text{$%
\mathcal{F}$}(X)\times E;F\right) $. Furthermore, we extend summability
concepts within this category, with a particular focus on integral and
dominated $(p;q)$-summing operators.
\end{abstract}

\maketitle

\setcounter{page}{1}


\let\thefootnote\relax\footnote{\textit{2020 Mathematics Subject
Classification.} 47B10, 46B28, 47L20.
\par
{}\textit{Key words and phrases. }Lip-Linear operators; Two-Lipschitz
operators; Lipschitz summing operators; integral Lip-Linear operators;
dominated $\left( p,q\right) $-summing Lip-Linear operators; Pietsch
factorization; factorization theorems.}

\section{Introduction and preliminaries}

Let $X$ be a pointed metric space, and let $E,F$ be Banach spaces. Inspired
by the idea presented in \cite[Proposition 6.15]{ccjv}, where the authors
defined mappings $T:X\times E\longrightarrow F$ that are Lipschitz for the
first component and linear for the second component, we investigate a
similar class of mappings defined on the cartesian product $X\times E$ with
values in a Banach space. The space of such mappings, denoted $LipL(X\times
E;F),$ is referred to as the space of Lip-Linear operators. In \cite{hadt},
the authors explore the concept of two-Lipschitz operators defined on the
cartesian product $X\times Y$ of two pointed metric spaces, where the
operators are Lipschitz with respect to both components.\textbf{\ }This
definition builds upon the work initiated by Dubei in 2009 \cite{d}. By
considering two-Lipschitz operators defined on $X\times E$ with the second
component being linear, we precisely recover the type of operators studied
in this paper. In the classical setting, tensor products serve as a powerful
tool to relate bilinear mappings to their corresponding linear mappings.
Specifically, the space of bilinear operators $\mathcal{B}\left( E\times
F;G\right) $ is isometrically identified with the space of linear operators $%
\mathcal{L}(E\widehat{\otimes }_{\pi }F;G),$ where $E\widehat{\otimes }_{\pi
}F$ is the projective tensor product of $E$ and $F$. A similar approach
applies to Lipschitz tensor products. By considering linear mappings defined
on the Lipschitz tensor product $X\boxtimes E,$ we aim to study how the
corresponding mappings behave on the cartesian product $X\times E.$ Let $%
u:X\boxtimes E\rightarrow F$ be a linear operator. The corresponding
operator $T_{u}:X\times E\rightarrow F$ defined by $T_{u}\left( x,e\right)
=u\left( \delta _{\left( x,0\right) }\boxtimes e\right) $ for each $x\in X$
and $e\in E,$ is Lip-Linear. Conversely, if $T:X\times E\rightarrow F$ is a
Lip-Linear operator, then there exists a linear operator $\widehat{T}%
:X\boxtimes E\rightarrow F$ such that%
\begin{equation*}
\widehat{T}(\dsum\limits_{i=1}^{n}\delta _{\left( x_{i},y_{i}\right)
}\boxtimes e_{i})=\dsum\limits_{i=1}^{n}T\left( x_{i},e_{i}\right) -T\left(
y_{i},e_{i}\right) .
\end{equation*}%
We therefore have the algebraic identification%
\begin{equation*}
LipL(X\times E;F)=\mathcal{L}(X\boxtimes E;F),
\end{equation*}%
which can be represented in the following diagram 
\begin{equation*}
\begin{array}{ccc}
X\times E & \overset{T}{\longrightarrow } & F \\ 
\sigma ^{LipL}\downarrow & \nearrow \widehat{T} &  \\ 
X\boxtimes E &  & 
\end{array}%
\end{equation*}%
where $\sigma ^{LipL}=\delta _{X}\boxtimes id_{E}.$ We equip the space $%
LipL(X\times E;F)$ with the norm%
\begin{equation*}
LipL\left( T\right) =\sup_{x\neq y;\left\Vert e\right\Vert =1}\frac{%
\left\Vert T\left( x,e\right) -T\left( y,e\right) \right\Vert }{d\left(
x,y\right) },
\end{equation*}%
and denote by $LipL_{0}(X\times E;F)$ the Banach space of all Lip-Linear
operators from $X\times E$ to $F$ such that $T(0,e)=0$ for each $e\in E$.
The correspondence $T\longleftrightarrow \widehat{T}$ is an isometric
isomorphism between the Banach spaces $LipL_{0}(X\times E;F)$ and $\mathcal{L%
}(X\widehat{\boxtimes }_{\pi }E;F)$ and consequently with the space of
bilinear operators $\mathcal{B}(\mathcal{F}(X)\times E;F).$ If $F=\mathbb{K}$%
, we simply denote $LipL_{0}(X\times E)$. In this case, we obtain a
Lipschitz version of a well-known classical result 
\begin{equation*}
\left( X\widehat{\boxtimes }_{\pi }E\right) ^{\ast }=LipL_{0}(X\times E).
\end{equation*}%
In the linear case, if $v:E\rightarrow F$ is a linear operator, its
corresponding bilinear form $B_{v}:E\times F^{\ast }\rightarrow \mathbb{K}$
is defined as follows 
\begin{equation*}
\begin{array}{ccc}
\left( e,z^{\ast }\right) & \longmapsto & z^{\ast }\left( v\left( e\right)
\right) .%
\end{array}%
\end{equation*}%
This approach can be extended using Lip-linear forms. Specifically, for a
Lipschitz operator $R:X\rightarrow E$, we associate a Lip-linear form $T_{R}$
defined by%
\begin{equation}
\begin{array}{cccc}
T_{R} & X\times E^{\ast } & \longrightarrow & \mathbb{K}%
\end{array}%
\begin{array}{ccc}
\left( x,e^{\ast }\right) & \longmapsto & e^{\ast }\left( R\left( x\right)
\right)%
\end{array}
\label{1,1}
\end{equation}%
In the sequel, we examine the relationship between a Lipschitz operator and
its associated Lip-linear operator in the context of Lipschitz $p$-summing
and integral operators. The theory of summing operators, developed
extensively in various directions, can be naturally extended to Lip-Linear
operators. A key objective in introducing Lip-linear operators is to
generalize this theory and establish identification results. We propose new
definitions of summability and explore the connection between Lip-Linear
operators and their associated operators within these frameworks.

This paper is structured as follows: Section 1 introduces the necessary
definitions and fundamental concepts, including Lipschitz mappings, linear
and bilinear operators, and key identification results. We also recall de
definition of itegral linear and bilinear operators, as well as
two-Lipschitz operators. In Section 2, we define the space $LipL_{0}(X\times
E;F)$ of Lip-linear operators from the cartesian product $X\times E$ to a
Banach space $F$, exploring its properties and establishing relevant
identifications. We show that $LipL_{0}(X\times E;F)$ is isometrically
isomorphic to the space of bilinear operators $\mathcal{B}(\mathcal{F}%
(X)\times E;F)$. Furthermore, when $X$ and $Y$ are pointed metric spaces, we
establish an identification between the space of two-Lipschitz operators $%
Blip_{0}(X\times Y;F)$ and $LipL_{0}(X\times \mathcal{F}(Y);F)$. Section 3
extends the concept of integral operators to Lip-linear operators, allowing
us to establish identification results between the topological dual of $X%
\widehat{\boxtimes }_{\varepsilon }F$ and the space of integral Lip-linear
forms $LipL_{0}^{int}(X\times E)$. We also explore the relationship between
Lipschitz operators and their corresponding Lip-linear operators in the
context of integrability. Additionally, several factorization results are
presented. Finally, Section 4 introduces the concept of dominated $(p,q)$%
-summing Lip-linear operators, establishing key results and exploring their
fundamental properties.

The letters $E,F$ and $G$ stand for Banach spaces over $\mathbb{K}=\mathbb{R}
$ or $\mathbb{C}$. The closed unit ball of $E$ is denoted by $B_{E}$ and its
topological dual by $E^{\ast }.$ The Banach space of all continuous linear
operators from $E$ to $F$ is denoted by $\mathcal{L}(E;F)$, and $\mathcal{B}%
(E\times F;G)$ denotes the Banach space of all continuous bilinear
operators, equipped with the standard sup norm. Let $E\widehat{\otimes }%
_{\pi }F$ and $E\widehat{\otimes }_{\varepsilon }F$ be the Banach spaces of
projective and injective tensor products of $E$ and $F,$ respectively.
Consider the canonical continuous bilinear mapping $\sigma _{2}:E\times
F\longrightarrow E\widehat{\otimes }_{\pi }F,$ defined by%
\begin{equation}
\sigma _{2}\left( x,y\right) =x\otimes y.  \label{1,2}
\end{equation}%
For every continuous bilinear mapping $T:E\times F\longrightarrow G$ there
exists a unique continuous linear operator $\overline{T}:E\widehat{\otimes }%
_{\pi }F\longrightarrow G$ that satisfies $\overline{T}(e\otimes z)=T(e,z)$
for every $(e,z)\in E\times F$ and $\left\Vert \overline{T}\right\Vert
=\left\Vert T\right\Vert $. Taking $G=\mathbb{K}$ , we obtain a canonical
identification of the dual space of the projective tensor product 
\begin{equation*}
(E\widehat{\otimes }_{\pi }F)^{\ast }=\mathcal{B}(E\times F).
\end{equation*}%
Let $X$ and $Y$ be pointed metric spaces with base points denoted by $0$.
The Banach space of Lipschitz functions $T:X\rightarrow E$ that satisfy $%
T\left( 0\right) =0$ is denoted $Lip_{0}(X;E)$ and equipped with the norm%
\begin{equation*}
Lip(T)=\sup_{x\neq y}\frac{\left\Vert T\left( x\right) -T\left( y\right)
\right\Vert }{d\left( x,y\right) }.
\end{equation*}%
The closed unit ball $B_{X^{\#}}$ of $X^{\#}:=Lip_{0}(X;\mathbb{K})$ forms a
compact Hausdorff space under the topology of pointwise convergence on $X$ .
For every $x\in X$, define the evaluation functional $\delta _{x}\in \left(
X^{\#}\right) ^{\ast }$ by $\delta _{x}\left( f\right) =f\left( x\right) .$
The closed linear span of $\left\{ \delta _{x}:x\in X\right\} $ forms a
predual of $X^{\#},$ known as the Lipschitz-free space over $X,$ denoted $%
\mathcal{F}(X),$ as introduced by Godefroy and Kalton in \cite{gk}. In \cite%
{w}, Weaver studied this predual, referring to it as the Arens--Eells space
of $X$ and denoting it \textit{\AE }$(X)$. For $x,y\in X$ and $f\in X^{\#}$,
we have%
\begin{equation*}
\delta _{\left( x,y\right) }\left( f\right) =\delta _{x}\left( f\right)
-\delta _{y}\left( f\right) =f\left( x\right) -f\left( y\right) .
\end{equation*}%
Moreover, $\delta _{\left( x,y\right) }=\delta _{\left( x,0\right) }-\delta
_{\left( y,0\right) }.$ For $m\in \mathcal{F}(X)$, its norm is%
\begin{equation*}
\Vert m\Vert =\inf \left\{ \sum_{i=1}^{n}|\lambda _{i}|d(x_{i},y_{i}):\
m=\sum_{i=1}^{n}\lambda _{i}\delta _{\left( x_{i},y_{i}\right) }\right\}
\end{equation*}%
For any $T\in Lip_{0}(X;E)$, there is a unique linearization $T_{L}\in 
\mathcal{L}(\mathcal{F}(X);E)$ such that%
\begin{equation*}
T=T_{L}\circ \delta _{X},
\end{equation*}%
with $\Vert T_{L}\Vert =Lip(T)$. This yields the isometric identification%
\begin{equation}
Lip_{0}(X;E)=\mathcal{L}(\mathcal{F}(X);E).  \label{13}
\end{equation}%
If $E$ and $F$ are Banach space and $T:E\rightarrow F$ is a linear operator,
then the corresponding linear operator $T_{L}$ is given by%
\begin{equation}
T_{L}=T\circ \beta _{E},  \label{14}
\end{equation}%
where $\beta _{E}:\mathcal{F}\left( E\right) \rightarrow E$ is linear
quotient map verifying $\beta _{E}\circ \delta _{E}=id_{E}$ and $\left\Vert
\beta _{E}\right\Vert \leq 1$ (see \cite[Lemma 2.4]{gk} for more details
about $\beta _{E}$). We denote by $X\boxtimes E$ the Lipschitz tensor
product of $X$ and $E,$ as introduced and studied by Cabrera-Padilla et al.
in \cite{ccjv}. This space is spanned by the functionals $\delta _{\left(
x,y\right) }\boxtimes e$ on $Lip_{0}(X;E^{\ast }),$ defined by $\delta
_{\left( x,y\right) }\boxtimes e(f)=\langle f(x)-f(y),e\rangle $ for $x,y\in
X$ and $e\in E.$ The Lipschitz projective and injective norms on $X\boxtimes
E$ are given by 
\begin{equation*}
\pi (u)=\inf \left\{ \sum_{i=1}^{n}d(x_{i},y_{i})\Vert e_{i}\Vert \right\} 
\text{, }\varepsilon \left( u\right) =\sup_{f\in B_{X},\left\Vert e^{\ast
}\right\Vert =1}\left\{ \left\vert \sum_{i=1}^{n}\left( f\left( x_{i}\right)
-f\left( y_{i}\right) \right) e^{\ast }\left( e_{i}\right) \right\vert
\right\} ,
\end{equation*}%
where the infimum and supremum are taken over all representations of $u$ of
the form $\ u=\sum_{i=1}^{n}\delta _{\left( x_{i},y_{i}\right) }\boxtimes
e_{i}.$ We denote by $X\widehat{\boxtimes }_{\pi }E$ and $X\widehat{%
\boxtimes }_{\varepsilon }E$ the completions of $X\boxtimes E$ with respect
to $\pi \left( \cdot \right) $ and $\varepsilon \left( \cdot \right) ,$
respectively. According to \cite[Proposition 6.7 and ]{ccjv}, the
isometrically identifications%
\begin{equation}
X\widehat{\boxtimes }_{\pi }E=\mathcal{F}\left( X\right) \widehat{\otimes }%
_{\pi }E\text{ and }X\widehat{\boxtimes }_{\varepsilon }E=\mathcal{F}\left(
X\right) \widehat{\otimes }_{\varepsilon }E  \label{1,5}
\end{equation}%
hold via the mapping 
\begin{equation}
I\left( u\right) =I\left( \dsum\limits_{i=1}^{n}\delta _{\left(
x_{i};y_{i}\right) }\boxtimes e_{i}\right) =\dsum\limits_{i=1}^{n}\delta
_{\left( x_{i};y_{i}\right) }\otimes e_{i}.  \label{1,7}
\end{equation}%
This leads to the following isometric identifications%
\begin{equation}
\mathcal{B}\left( \mathcal{F}(X)\times E;F\right) =\mathcal{L}\left( 
\mathcal{F}(X)\widehat{\otimes }_{\pi }E;F\right) =\mathcal{L}\left( X%
\widehat{\boxtimes }_{\pi }E;F\right) .  \label{1,8}
\end{equation}%
Let $E,F$ and $G$ be Banach spaces. In \cite{v}, the author introduced the
concept of integral multilinear operators. A bilinear operator $B:E\times
F\rightarrow G$ is said to be integral (in the sense of Grothendieck) if
there exists a constant $C>0$ such that for any sequences $\left(
x_{i}\right) _{i=1}^{n}\subset E$, $\left( y_{i}\right) _{i=1}^{n}\subset F$
and $\left( z_{i}^{\ast }\right) _{i=1}^{n}\subset G^{\ast }$ we have 
\begin{equation}
\left\vert \dsum\limits_{i=1}^{n}\left\langle B(x_{i},y_{i}),z_{i}^{\ast
}\right\rangle \right\vert \leq C\sup_{x^{\ast }\in B_{E^{\ast }},y^{\ast
}\in B_{F^{\ast }}}\left\Vert \dsum\limits_{i=1}^{n}x^{\ast }\left(
x_{i}\right) y^{\ast }\left( y_{i}\right) z_{i}^{\ast }\right\Vert _{G^{\ast
}}.  \label{1,9}
\end{equation}%
The class of all integral Lip-Linear operators is denoted by $\mathcal{BI}%
(E\times F;G)$, which forms a Banach space with the integral norm%
\begin{equation*}
\mathfrak{I}_{int}^{2}(B)=\inf \left\{ C:C\text{ satisfies }(\ref{1,9}%
)\right\} .
\end{equation*}%
The same definition applies to linear operators, a concept originally
introduced in \cite{g}. We denote by $\mathcal{I}(E;F)$ the Banach space of
all integral linear operators and by $\mathfrak{I}(\cdot )$ its norm.
Furthermore, we have the identification (see, for example, \cite[Proposition
2.6]{v} and \cite[P. 59]{ryn})%
\begin{equation}
\mathcal{BI}(E\times F;G)=\mathcal{I}(E\widehat{\otimes }_{\varepsilon }F;G)
\label{1,10}
\end{equation}%
as well as the duality relation 
\begin{equation}
\left( E\widehat{\otimes }_{\varepsilon }F\right) ^{\ast }=\mathcal{BI}%
(E\times F).  \label{1,11}
\end{equation}%
We recall the definition of a two-Lipschitz operator $\left( \text{see \cite%
{hadt} and \cite{d}}\right) $. Let $X,Y$ be pointed metric spaces, and let $%
F $ be Banach space. A mapping $T:X\times Y\longrightarrow F$ is called a
two-Lipschitz operator if there exists a constant $C>0$ such that, for all $%
x,x^{\prime }\in X$ and $y,y^{\prime }\in Y$, 
\begin{equation*}
\left\Vert T(x,y)-T(x,y^{\prime })-T(x^{\prime },y)+T(x^{\prime },y^{\prime
})\right\Vert \leq Cd(x,x^{\prime })d(y,y^{\prime }),
\end{equation*}%
We denote by $BLip_{0}(X\times Y;F)$ the Banach space of all two-Lipschitz
operators $T$ satisfying $T(0,y)=T\left( x,0\right) =0$ for ever $x\in X$
and $y\in Y$. The norm on this space is given by%
\begin{equation*}
Blip\left( T\right) =\sup_{x\neq x^{\prime };y\neq y^{\prime }}\frac{%
\left\Vert T(x,y)-T(x,y^{\prime })-T(x^{\prime },y)+T(x^{\prime },y^{\prime
})\right\Vert }{d(x,x^{\prime })d(y,y^{\prime })}.
\end{equation*}

\section{\textsc{The Class of Lip-Linear operators}}

Let $X$ be a pointed metric space and $E$ a Banach space. We define
Lip-linear operators on the cartesian product $X\times E$ as mappings that
combine the properties of Lipschitz and linear operators. These operators
can be interpreted either as linear mappings acting on the Lipschitz tensor
product $X\boxtimes E,$ or as bilinear mappings acting on $\mathcal{F}\left(
X\right) \times E$. Among our results, we show that every two-Lipschitz
operator defined on the cartesian product of two pointed metric spaces $%
X\times Y$ can be factored through a Lip-linear operator defined on $X\times 
\mathcal{F}\left( Y\right) $.

\begin{definition}
Let $X$ be a pointed metric space, and let $E,F$ be Banach spaces. A mapping 
$T:X\times E\longrightarrow F$ is called a Lip-Linear operator if there is a
constant $C>0$ such that for all $x,y\in X$ and $e\in E$, 
\begin{equation}
\left\Vert T(x,e)-T(y,e)\right\Vert \leq Cd(x,y)\left\Vert e\right\Vert ,
\label{21}
\end{equation}%
and $T(x,\cdot )$ is a linear operator for each fixed $x\in X$.
\end{definition}

By definition, $T(x,0)=0$ for every $x\in X$. We denote by $LipL_{0}(X\times
E;F)$ the space of all Lip-Linear operators from $X\times E$ to $F$ such
that $T(0,e)=0$ for every $e\in E$. For $T\in LipL_{0}(X\times E;F),$ define 
\begin{equation*}
LipL\left( T\right) :=\inf \left\{ C:\text{ }C\text{ satisfies~ }(\ref{21}%
)\right\} =\sup_{x\neq y;\left\Vert e\right\Vert =1}\frac{\left\Vert T\left(
x,e\right) -T\left( y,e\right) \right\Vert }{d\left( x,y\right) }.
\end{equation*}%
It can be shown that $(LipL_{0},LipL\left( \cdot \right) )$ is a Banach
space. The proof is omitted. Note that, every Lip-Linear operator is
two-Lipschitz, with $Blip(T)\leq LipL(T).$ More generally, if $E,F$ and $G$
be Banach spaces, the following inclusions hold%
\begin{equation}
\mathcal{B}\left( E\times F;G\right) \subset LipL_{0}\left( E\times
F;G\right) \subset BLip_{0}(E\times F;G).  \label{22}
\end{equation}%
and%
\begin{equation*}
Blip(T)\leq LipL(T)\leq \Vert T\Vert .
\end{equation*}%
Indeed, by definition, the inclusions in (\ref{22}) become evident. Let $%
T\in \mathcal{B}\left( E\times F;G\right) $, we have%
\begin{eqnarray*}
Blip(T) &=&\sup_{x\neq y\text{ }x^{\prime }\neq y^{\prime }}\frac{\left\Vert
T\left( x,y\right) -T\left( x,y^{\prime }\right) -T\left( x^{\prime
},y\right) +T\left( x^{\prime },y^{\prime }\right) \right\Vert }{\left\Vert
x-x^{\prime }\right\Vert \left\Vert y-y^{\prime }\right\Vert } \\
&=&\sup_{x\neq y\text{ }x^{\prime }\neq y^{\prime }}\frac{\left\Vert T\left(
x,y-y^{\prime }\right) -T\left( x^{\prime },y-y^{\prime }\right) \right\Vert 
}{\left\Vert x-x^{\prime }\right\Vert \left\Vert y-y^{\prime }\right\Vert }
\end{eqnarray*}%
for $e=y-y^{\prime }$ with $\left\Vert e\right\Vert =1$, this becomes%
\begin{equation*}
Blip(T)\leq \sup_{x\neq y\text{ }\left\Vert e\right\Vert =1}\frac{\left\Vert
T\left( x,e\right) -T\left( x^{\prime },e\right) \right\Vert }{\left\Vert
x-x^{\prime }\right\Vert }=LipL(T).
\end{equation*}%
again, 
\begin{eqnarray*}
LipL(T) &=&\sup_{x\neq y\text{ }\left\Vert e\right\Vert =1}\frac{\left\Vert
T\left( x-x^{\prime },e\right) \right\Vert }{\left\Vert x-x^{\prime
}\right\Vert } \\
&\leq &\sup_{x\neq y\text{ }\left\Vert e\right\Vert =1}\frac{\left\Vert
T\right\Vert \left\Vert x-x^{\prime }\right\Vert \left\Vert e\right\Vert }{%
\left\Vert x-x^{\prime }\right\Vert }=\left\Vert T\right\Vert .
\end{eqnarray*}

\begin{proposition}
Let $R:X\rightarrow G$ be a Lipschitz operator and $v:E\rightarrow F$ a
linear operator. For any bilinear operator $B:G\times F\rightarrow H$, the
operator $T=B\circ (R,v)$ is Lip-Linear, and%
\begin{equation*}
LipL\left( T\right) \leq \left\Vert B\right\Vert Lip\left( R\right)
\left\Vert v\right\Vert .
\end{equation*}%
This means that $\mathcal{B}\circ (Lip_{0},\mathcal{L})\left( X\times
E;H\right) \subset LipL_{0}\left( X\times E;H\right) $.
\end{proposition}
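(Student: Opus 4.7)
The plan is essentially a direct verification that the composition $T=B\circ(R,v)$ satisfies the three defining properties of a member of $LipL_{0}(X\times E;H)$: linearity in the second slot, a Lipschitz bound in the first slot with the stated constant, and vanishing at the base point. No deep tool is needed; the content is bookkeeping that uses only the definitions of $Lip_{0}$, of a linear operator, and of the norm on $\mathcal{B}(G\times F;H)$.

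First I would fix $x\in X$ and check that $e\mapsto T(x,e)=B(R(x),v(e))$ is linear: given $e_{1},e_{2}\in E$ and $\lambda\in\mathbb{K}$, linearity of $v$ gives $v(e_{1}+\lambda e_{2})=v(e_{1})+\lambda v(e_{2})$, and then linearity of $B$ in its second variable yields
\begin{equation*}
T(x,e_{1}+\lambda e_{2})=B(R(x),v(e_{1}))+\lambda B(R(x),v(e_{2}))=T(x,e_{1})+\lambda T(x,e_{2}).
\end{equation*}
Second, I would derive the Lipschitz estimate in the first variable. For $x,y\in X$ and $e\in E$, linearity of $B$ in the first slot gives $B(R(x),v(e))-B(R(y),v(e))=B(R(x)-R(y),v(e))$, and then the bilinear norm bound combined with the Lipschitz condition on $R$ and the operator norm bound on $v$ yields
\begin{equation*}
\|T(x,e)-T(y,e)\|\leq\|B\|\,\|R(x)-R(y)\|\,\|v(e)\|\leq\|B\|\,Lip(R)\,\|v\|\,d(x,y)\,\|e\|.
\end{equation*}
This shows $T$ is Lip-Linear and, after dividing by $d(x,y)\|e\|$ and taking the supremum over $x\neq y$ and $\|e\|=1$, gives $LipL(T)\leq\|B\|\,Lip(R)\,\|v\|$.

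Finally, for $T$ to belong to $LipL_{0}$ rather than merely $LipL$, I need $T(0,e)=0$ for all $e$, which amounts to $B(R(0),v(e))=0$; since $R\in Lip_{0}(X;G)$ by hypothesis we have $R(0)=0$, and bilinearity of $B$ closes the argument. I do not anticipate a real obstacle here: the only subtlety to flag is the implicit base-point convention in the notation $\mathcal{B}\circ(Lip_{0},\mathcal{L})$, which is exactly what guarantees the vanishing at $0$. Once this is noted, the proposition follows immediately from the chain of inequalities above.
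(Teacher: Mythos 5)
Your proposal is correct and follows essentially the same route as the paper's proof: linearity of $T(x,\cdot)$ from bilinearity of $B$, and the Lipschitz estimate from $B(R(x),v(e))-B(R(y),v(e))=B(R(x)-R(y),v(e))$ together with the norm bounds on $B$, $R$, and $v$. Your explicit verification of $T(0,e)=0$ via $R(0)=0$ is a small detail the paper leaves implicit, but it does not change the argument.
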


\begin{proof}
For $x\in X,$ the operator $T(x,\cdot )=B(u\left( x\right) ,\cdot )$ is
linear. Let $x,y\in X$ and $e\in E,$ we have%
\begin{eqnarray*}
\left\Vert T(x,e)-T(y,e)\right\Vert &=&\left\Vert B(R\left( x\right)
,v\left( e\right) )-B(R\left( y\right) ,v\left( e\right) )\right\Vert \\
&=&\left\Vert B(R\left( x\right) -R\left( y\right) ,v\left( e\right)
)\right\Vert \\
&\leq &\left\Vert B\right\Vert \left\Vert R\left( x\right) -R\left( y\right)
\right\Vert \left\Vert v\left( e\right) \right\Vert \\
&\leq &\left\Vert B\right\Vert Lip\left( R\right) \left\Vert v\right\Vert
d\left( x,y\right) \left\Vert e\right\Vert .
\end{eqnarray*}%
Thus, $T=B(R,v)$ is Lip-Linear, and 
\begin{equation*}
LipL\left( T\right) \leq \left\Vert B\right\Vert Lip\left( R\right)
\left\Vert v\right\Vert .
\end{equation*}
\end{proof}

For the mapping $T:X\times E\longrightarrow F,$ we define the associated
operators $A_{T}:x\mapsto A_{T}\left( x\right) $ and $B_{T}:e\mapsto
B_{T}\left( e\right) ,$ where $A_{T}(x)\left( e\right) =T(x,e)$ and $%
B_{T}(e)\left( x\right) =T(x,e)$ for every $x\in X$ and $e\in E.$ This
yields the following characterization of a Lip-linear operator.

\begin{proposition}
For a mapping $T:X\times E\longrightarrow F$, the following statements are
equivalent.

1) $T\in LipL_{0}(X\times E;F)$.

2) The Lipschitz operator $A_{T}\in Lip_{0}(X;\mathcal{L}(E;F))$

3) The linear operator $B_{T}\in \mathcal{L}(E;Lip_{0}(X;F)).$

In this case, 
\begin{equation}
LipL(T)=Lip\left( A_{T}\right) =\left\Vert B_{T}\right\Vert .  \label{2.3}
\end{equation}
\end{proposition}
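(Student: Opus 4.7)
The plan is to establish the cyclic chain of implications $1) \Rightarrow 2) \Rightarrow 3) \Rightarrow 1)$, arranging each step so that it yields one of the norm inequalities $LipL(T) \geq Lip(A_T) \geq \Vert B_T \Vert \geq LipL(T)$. Concatenating these forces the equalities in \eqref{2.3}, so the three conditions carry the same quantitative content and are in particular equivalent.

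For $1)\Rightarrow 2)$, I would fix $x\in X$: linearity of $T(x,\cdot)$ is built into the definition of $LipL_0$, so $A_T(x)$ is a linear map $E\to F$. Continuity comes from evaluating the Lip-Linear inequality at $y=0$ together with $T(0,e)=0$, giving $\Vert A_T(x)(e)\Vert \leq LipL(T)\,d(x,0)\Vert e\Vert$. The Lipschitz estimate for $A_T$ in the operator norm follows from taking the supremum over $\Vert e\Vert\leq 1$ in $\Vert T(x,e)-T(y,e)\Vert \leq LipL(T)\,d(x,y)\Vert e\Vert$, so $Lip(A_T)\leq LipL(T)$, and $A_T(0)=0$ because $T(0,\cdot)=0$.

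For $2)\Rightarrow 3)$, define $B_T(e)(x)=A_T(x)(e)$. Then $B_T(e)\in Lip_0(X;F)$ since $\Vert B_T(e)(x)-B_T(e)(y)\Vert\leq \Vert A_T(x)-A_T(y)\Vert_{\mathcal{L}(E;F)}\Vert e\Vert \leq Lip(A_T)\,d(x,y)\Vert e\Vert$ and $B_T(e)(0)=A_T(0)(e)=0$. Linearity of the assignment $e\mapsto B_T(e)$ into $Lip_0(X;F)$ is transferred directly from the pointwise linearity of each $A_T(x)$, and the estimate just displayed gives $\Vert B_T\Vert\leq Lip(A_T)$. For $3)\Rightarrow 1)$, reading off $T(x,e)=B_T(e)(x)$: linearity of $B_T$ yields linearity of $T(x,\cdot)$ for each $x$; the vanishing $T(0,e)=B_T(e)(0)=0$ and $T(x,0)=B_T(0)(x)=0$ are automatic; and $\Vert T(x,e)-T(y,e)\Vert=\Vert B_T(e)(x)-B_T(e)(y)\Vert \leq Lip(B_T(e))\,d(x,y)\leq \Vert B_T\Vert\,\Vert e\Vert\, d(x,y)$, so $T\in LipL_0(X\times E;F)$ with $LipL(T)\leq \Vert B_T\Vert$.

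No step hides a genuine obstacle; the argument is essentially a bookkeeping exercise in currying. The only points that require slight attention are (i) checking that $A_T$ and $B_T$ really take values in the base-point-preserving spaces $Lip_0(X;\mathcal{L}(E;F))$ and $Lip_0(X;F)$, which is precisely where the normalization conditions $T(0,e)=0$ and $T(x,0)=0$ are used, and (ii) tracking which direction of each inequality is being produced, so that the final chain closes up to yield equalities rather than mere comparability.
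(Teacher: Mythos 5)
Your proof is correct and follows essentially the same route as the paper: both arguments come down to comparing the suprema defining $LipL(T)$, $Lip(A_T)$ and $\left\Vert B_T\right\Vert$ after currying. The only difference is organizational: you close a cyclic chain of one-sided inequalities and explicitly check the base-point and linearity conditions, whereas the paper computes the two norm equalities directly by interchanging suprema and leaves those routine verifications implicit.
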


\begin{proof}
We verify the equality $(\ref{2.3})$ as follows:%
\begin{eqnarray*}
Lip\left( A_{T}\right) &=&\sup_{x\neq y}\frac{\left\Vert A_{T}\left(
x\right) -A_{T}\left( y\right) \right\Vert }{d\left( x,y\right) }%
=\sup_{\left\Vert e\right\Vert =1}\sup_{x\neq y}\frac{\left\Vert A_{T}\left(
x\right) \left( e\right) -A_{T}\left( y\right) \left( e\right) \right\Vert }{%
d\left( x,y\right) } \\
&=&\sup_{\left\Vert e\right\Vert =1}\sup_{x\neq y}\frac{\left\Vert T\left(
x,e\right) -T\left( y,e\right) \right\Vert }{d\left( x,y\right) }=LipL(T).
\end{eqnarray*}%
For the second part, we compute%
\begin{eqnarray*}
\left\Vert B_{T}\right\Vert &=&\sup_{\left\Vert e\right\Vert =1}Lip\left(
B_{T}\left( e\right) \right) =\sup_{\left\Vert e\right\Vert =1}\sup_{x\neq y}%
\frac{\left\Vert B_{T}\left( e\right) \left( x\right) -B_{T}\left( e\right)
\left( y\right) \right\Vert }{d\left( x,y\right) } \\
&=&\sup_{\left\Vert e\right\Vert =1}\sup_{x\neq y}\frac{\left\Vert T\left(
x,e\right) -T\left( y,e\right) \right\Vert }{d\left( x,y\right) }=LipL(T).
\end{eqnarray*}
\end{proof}

As an immediate consequence of the last Proposition, we have the following
result.

\begin{corollary}
Let $X$ be a pointed metric space, and let $E$ and $F$ be Banach spaces. For
a two-Lipschitz mapping $T:X\times E\longrightarrow F$, the following
statements are equivalent.

1) $T\in LipL_{0}(X\times E;F)$.

2) $A_{T}\left( x\right) \in \mathcal{L}(E;F)$ for every fixed $x\in X.$

In this case, we have $LipL(T)=Blip(T).$
\end{corollary}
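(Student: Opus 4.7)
The plan is to read off $(1)\Rightarrow(2)$ from the preceding proposition, and to prove $(2)\Rightarrow(1)$ by specializing the two-Lipschitz inequality at $e'=0$. The equality of norms will then come for free by pairing the resulting inequality with the reverse inequality $Blip(T)\leq LipL(T)$ that was established right after the definition of $LipL_0(X\times E;F)$.

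For $(1)\Rightarrow(2)$, assuming $T\in LipL_0(X\times E;F)$, the definition gives linearity of $T(x,\cdot)$ for each $x\in X$. Boundedness of $A_T(x)$ can be obtained either by invoking Proposition 2.3, which yields $A_T\in Lip_0(X;\mathcal{L}(E;F))$ directly, or by using the Lip-Linear inequality at $y=0$ together with $T(0,e)=0$ to get $\|T(x,e)\|\leq LipL(T)\,d(x,0)\|e\|$. Either route gives $A_T(x)\in\mathcal{L}(E;F)$.

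For $(2)\Rightarrow(1)$, the linearity of $T(x,\cdot)$ is exactly hypothesis (2). It remains to verify the Lip-Linear inequality. The key observation is that, since $T$ is two-Lipschitz in $BLip_0(X\times E;F)$, we have $T(x,0)=T(y,0)=0$, so taking $e'=0$ in
\[
\|T(x,e)-T(x,e')-T(y,e)+T(y,e')\|\leq Blip(T)\,d(x,y)\,\|e-e'\|
\]
collapses the left-hand side to $\|T(x,e)-T(y,e)\|$ and the right-hand side to $Blip(T)\,d(x,y)\,\|e\|$. This proves $T\in LipL_0(X\times E;F)$ with $LipL(T)\leq Blip(T)$.

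For the norm equality, the inequality $Blip(T)\leq LipL(T)$ was already established in the chain of inclusions (\ref{22}) that followed the definition; combined with $LipL(T)\leq Blip(T)$ obtained in the previous paragraph, this yields $LipL(T)=Blip(T)$. There is no genuine obstacle in this corollary: the only subtlety worth flagging is that the base-point conditions $T(x,0)=0$ are essential for the $e'=0$ specialization to succeed, and these are available because the ambient space $BLip_0(X\times E;F)$ enforces them.
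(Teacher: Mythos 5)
Your proof is correct and follows the route the paper intends: the paper states this corollary without proof as an "immediate consequence" of the preceding proposition, and your argument — $(1)\Rightarrow(2)$ via that proposition, $(2)\Rightarrow(1)$ by setting $e'=0$ in the two-Lipschitz inequality and using the base-point conditions of $BLip_{0}$, then combining $LipL(T)\leq Blip(T)$ with the previously established $Blip(T)\leq LipL(T)$ — is exactly the missing verification. No gaps.
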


The following result is straightforward.

\begin{proposition}
Let $X$ and $X_{0}$ be pointed metric spaces, and let $E,F,E_{0},F_{0}$ be
Banach spaces. If $R\in Lip_{0}(X;X_{0})$, $v\in \mathcal{L}(E;E_{0})$, $%
T\in LipL_{0}(X_{0}\times E_{0};F_{0})$ and $w\in \mathcal{L}(F_{0};F),$ we
have 
\begin{equation*}
w\circ T\circ (R,v)\in LipL_{0}(X,E;F).
\end{equation*}%
Moreover, 
\begin{equation*}
LipL(w\circ T\circ (R,v))\leq \left\Vert w\right\Vert LipL(T)Lip(R)\Vert
v\Vert .
\end{equation*}
\end{proposition}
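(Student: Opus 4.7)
The plan is to verify directly that $S := w \circ T \circ (R, v)$, defined by $S(x,e) = w\bigl(T(R(x), v(e))\bigr)$, satisfies the two defining properties of a Lip-Linear operator in $LipL_0(X \times E; F)$, and then to estimate the norm by chaining the natural inequalities available from each factor.

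First I would check the pointed and linearity conditions. Since $R \in Lip_0(X;X_0)$ we have $R(0) = 0$, and since $T \in LipL_0(X_0 \times E_0; F_0)$ we have $T(0, v(e)) = 0$ for every $e$; composing with the linear $w$ yields $S(0,e) = w(T(0, v(e))) = 0$. For the second slot, fix $x \in X$ and observe that $S(x, \cdot) = w \circ T(R(x), \cdot) \circ v$ is a composition of the linear maps $v$, $T(R(x), \cdot) \in \mathcal{L}(E_0; F_0)$ (linearity of $T$ in its second entry), and $w$, hence linear. This already gives that $S$ is a candidate Lip-Linear map.

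Next I would verify inequality $(\ref{21})$ and read off the norm bound in one computation. For $x, y \in X$ and $e \in E$, bound
\begin{equation*}
\|S(x,e) - S(y,e)\| \leq \|w\| \, \|T(R(x), v(e)) - T(R(y), v(e))\|
\leq \|w\| \, LipL(T) \, d(R(x), R(y)) \, \|v(e)\|,
\end{equation*}
using the operator norm of $w$ and the defining inequality of $T \in LipL_0$. Then applying $d(R(x), R(y)) \leq Lip(R)\, d(x,y)$ and $\|v(e)\| \leq \|v\|\,\|e\|$ yields
\begin{equation*}
\|S(x,e) - S(y,e)\| \leq \|w\| \, LipL(T) \, Lip(R) \, \|v\| \, d(x,y) \, \|e\|,
\end{equation*}
which shows both that $S \in LipL_0(X \times E; F)$ and that $LipL(S) \leq \|w\| \, LipL(T) \, Lip(R) \, \|v\|$.

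There is essentially no obstacle here: the statement is the ideal/composition property for the class $LipL_0$, and the argument is a direct concatenation of the respective Lipschitz and operator-norm estimates, mirroring the proof of the earlier proposition on $B \circ (R, v)$. The only point worth flagging is being careful about the order in which the $Lip_0$ condition on $R$ (used for $R(0)=0$) and the $LipL_0$ condition on $T$ (used for $T(0, \cdot) = 0$) cooperate to guarantee $S(0, e) = 0$, so that $S$ lands in $LipL_0$ rather than merely in $LipL$.
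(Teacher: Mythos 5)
Your proof is correct and is exactly the direct verification the paper has in mind: the paper states this proposition without proof, calling it straightforward, and your chaining of the estimates mirrors the proof it does give for the earlier proposition on $B\circ (R,v)$. Nothing to add.
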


\begin{example}
1) Consider the functions $f\in X^{\#}$, $e^{\ast }\in E^{\ast }$ and $z\in
F.$ Define the mapping ${f\cdot e^{\ast }\cdot z}:X\times E\longrightarrow F$
by%
\begin{equation*}
{f\cdot e^{\ast }\cdot z}(x,e)=f(x)e^{\ast }(e)z.
\end{equation*}%
A simple calculation shows that this mapping is Lip-Linear and 
\begin{equation*}
LipL(f\cdot g\cdot e)=Lip(f)\Vert e^{\ast }\Vert \Vert z\Vert .
\end{equation*}%
2) Consider the map $R\in Lip_{0}(X;F)$ and $e^{\ast }\in E^{\ast }$. Define
the mapping $T_{R,e^{\ast }}:X\times E\longrightarrow F$ by $T_{R,e^{\ast
}}(x,e)=R(x)e^{\ast }(e).$ Then, a simple calculation shows that this
mapping is Lip-Linear and 
\begin{equation*}
LipL(T_{f,e^{\ast }})=Lip(R)\Vert e^{\ast }\Vert .
\end{equation*}%
3) The mapping $\Psi :X\times Lip_{0}\left( X;E\right) \rightarrow E$
defined by%
\begin{equation*}
\Psi \left( x,R\right) =R\left( x\right) ,
\end{equation*}%
is Lip-Linear and has the norm $1$.
\end{example}

\begin{lemma}
\label{Lemme1}Let $X$ be a pointed metric space, and let $E,F$ be Banach
spaces. Let $v:E\rightarrow F$ be a linear operator. The operator $\delta
_{X}\boxtimes v:X\times E\rightarrow X\widehat{\boxtimes }_{\pi }F$ defined
by 
\begin{equation*}
\delta _{X}\boxtimes v\left( x,e\right) =\delta _{\left( x,0\right)
}\boxtimes v\left( e\right) ,
\end{equation*}%
is Lip-linear, and we have%
\begin{equation*}
LipL\left( \delta _{X}\boxtimes v\right) =\left\Vert v\right\Vert .
\end{equation*}
\end{lemma}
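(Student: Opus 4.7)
The plan is to split the argument into a linearity check in the second variable, a Lipschitz estimate in the first variable that yields the upper bound $LipL(\delta_X \boxtimes v) \leq \|v\|$, and a converse estimate using the identification $X\widehat{\boxtimes}_\pi F = \mathcal{F}(X)\widehat{\otimes}_\pi F$ to get the lower bound.

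First, I would observe that for fixed $x \in X$, the map $e \mapsto \delta_{(x,0)}\boxtimes v(e)$ is linear, since $v$ is linear and the Lipschitz tensor is linear in its second slot (this is immediate from the defining action on $Lip_0(X;E^*)$). Also, $\delta_X\boxtimes v(0,e) = \delta_{(0,0)}\boxtimes v(e) = 0$, so the normalization at the base point holds.

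Next, for the upper bound, I would use the identities
\begin{equation*}
\delta_X\boxtimes v(x,e) - \delta_X\boxtimes v(y,e) = \delta_{(x,0)}\boxtimes v(e) - \delta_{(y,0)}\boxtimes v(e) = \delta_{(x,y)}\boxtimes v(e),
\end{equation*}
invoking the relation $\delta_{(x,y)} = \delta_{(x,0)} - \delta_{(y,0)}$ recalled in the preliminaries together with bilinearity of $\boxtimes$. Applying the projective Lipschitz norm $\pi$ to the single-term representation $\delta_{(x,y)}\boxtimes v(e)$ yields $\pi(\delta_{(x,y)}\boxtimes v(e)) \leq d(x,y)\,\|v(e)\| \leq d(x,y)\,\|v\|\,\|e\|$. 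Dividing by $d(x,y)$ and taking the supremum over $x\neq y$ and $\|e\|=1$ gives $LipL(\delta_X\boxtimes v) \leq \|v\|$.

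For the reverse inequality, the key point is that the projective norm of a rank-one tensor is exactly the product of the norms of its factors. I would exploit the isometric identification $X\widehat{\boxtimes}_\pi F = \mathcal{F}(X)\widehat{\otimes}_\pi F$ from (\ref{1,5}), under which $\delta_{(x,0)}\boxtimes v(e)$ corresponds to $\delta_x \otimes v(e)$, so
\begin{equation*}
\pi(\delta_{(x,0)}\boxtimes v(e)) = \|\delta_x\|_{\mathcal{F}(X)}\,\|v(e)\|_F = d(x,0)\,\|v(e)\|
\end{equation*}
for any $x\neq 0$. Setting $y=0$ in the quotient $\pi(\delta_{(x,y)}\boxtimes v(e))/d(x,y)$ therefore gives $\|v(e)\|$, and taking the supremum over $\|e\|=1$ produces $LipL(\delta_X\boxtimes v) \geq \|v\|$.

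The only genuinely delicate step is the equality $\pi(\delta_x\otimes v(e)) = d(x,0)\|v(e)\|$; the $\leq$ direction is trivial, while the $\geq$ direction requires either Hahn--Banach (dualizing to a rank-one bilinear form achieving the product of norms) or citing the classical fact that the projective norm of an elementary tensor coincides with the product of norms, both of which are standard once the identification (\ref{1,5}) is in hand.
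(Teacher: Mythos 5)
Your proposal is correct and follows essentially the same route as the paper: both reduce the difference to the elementary tensor $\delta_{(x,y)}\boxtimes v(e)$ and use that its projective norm equals $d(x,y)\,\Vert v(e)\Vert$ to get equality of the two quantities. You are in fact slightly more careful than the paper, which asserts $\left\Vert \delta _{\left( x,y\right) }\boxtimes v\left( e\right)\right\Vert =\left\Vert \delta _{\left( x,y\right) }\right\Vert \left\Vert v\left(e\right) \right\Vert$ without comment, whereas you correctly flag that the lower bound for the norm of an elementary tensor is the only nontrivial point and indicate how to justify it.
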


\begin{proof}
It is clear that $\delta _{X}\boxtimes v$ is linear for the second variable,
and 
\begin{equation*}
\delta _{X}\boxtimes v\left( 0,e\right) =\delta _{\left( 0,0\right)
}\boxtimes v\left( e\right) =0,
\end{equation*}%
for every $e\in E$. Let $x,y\in X$ and $e\in E$ then%
\begin{eqnarray*}
\left\Vert \delta _{X}\boxtimes v\left( x,e\right) -\delta _{X}\boxtimes
v\left( y,e\right) \right\Vert &=&\left\Vert \delta _{\left( x,0\right)
}\boxtimes v\left( e\right) -\delta _{\left( y,0\right) }\boxtimes v\left(
e\right) \right\Vert \\
&=&\left\Vert \left( \delta _{\left( x,0\right) }-\delta _{\left( y,0\right)
}\right) \boxtimes v\left( e\right) \right\Vert \\
&=&\left\Vert \delta _{\left( x,y\right) }\boxtimes v\left( e\right)
\right\Vert \\
&=&\left\Vert \delta _{\left( x,y\right) }\right\Vert \left\Vert v\left(
e\right) \right\Vert \\
&\leq &\left\Vert v\right\Vert d\left( x,y\right) \left\Vert e\right\Vert ,
\end{eqnarray*}%
thus, $\delta _{X}\boxtimes v$ is Lip-linear\ and 
\begin{eqnarray*}
LipL\left( \delta _{X}\boxtimes v\right) &=&\sup_{x\neq y,\left\Vert
e\right\Vert =1}\frac{\left\Vert \delta _{X}\boxtimes v\left( x,e\right)
-\delta _{X}\boxtimes v\left( y,e\right) \right\Vert }{d\left( x,y\right) }
\\
&=&\sup_{x\neq y,\left\Vert e\right\Vert =1}\frac{\left\Vert \delta _{\left(
x,y\right) }\right\Vert \left\Vert v\left( e\right) \right\Vert }{d\left(
x,y\right) } \\
&=&\sup_{x\neq y,\left\Vert e\right\Vert =1}\frac{d\left( x,y\right)
\left\Vert v\left( e\right) \right\Vert }{d\left( x,y\right) }=\left\Vert
v\right\Vert .
\end{eqnarray*}
\end{proof}

Consider the mapping $\sigma ^{LipL}:X\times E\rightarrow X\widehat{%
\boxtimes }_{\pi }E$ defined by 
\begin{equation*}
\sigma ^{LipL}(x,e)=\delta _{\left( x,0\right) }\boxtimes e.
\end{equation*}%
It is straightforward to verify that $\sigma ^{LipL}=\delta _{X}\boxtimes
id_{E}.$ By Lemma $\ref{Lemme1},$\ we conclude that $\sigma ^{LipL}\in
LipL_{0}(X\times E;X\widehat{\boxtimes }_{\pi }E)$ and $LipL(\sigma
^{LipL})=1.$ Let $T\in LipL_{0}(X\times E;F),$ from $(\ref{22}),$ we know
that $T$ is two-Lipschitz. The factorization theorem for two-Lipschitz
operators, as presented in \cite[Remark 2.7]{hadt}, guarantees the existence
of a unique bounded linear operator $\widetilde{T}:\mathcal{F}(X)\widehat{%
\otimes }_{\pi }\mathcal{F}(E)\rightarrow F$ such that%
\begin{equation*}
T=\widetilde{T}\circ \sigma _{2}\circ \left( \delta _{X},\delta _{E}\right) ,
\end{equation*}%
where $\sigma _{2}$ is the canonical continuous bilinear map defined in $%
\left( \ref{1,2}\right) $. This implies that the following diagram commutes:%
\begin{equation*}
\begin{array}{ccc}
X\times E &  &  \\ 
\left( \delta _{X},\delta _{E}\right) \downarrow & T\searrow &  \\ 
\mathcal{F}(X)\times \mathcal{F}(E) &  & F \\ 
\sigma _{2}\downarrow & \widetilde{T}\nearrow &  \\ 
\mathcal{F}(X)\widehat{\otimes }_{\pi }\mathcal{F}(E) &  & 
\end{array}%
\end{equation*}%
The following theorem provides a further factorization result for Lip-linear
operators.

\begin{theorem}
\label{IdentLinearisation}For every Lip-Linear operator $T\in
LipL_{0}(X\times E;F),$ there exists a unique bounded linear operator $%
\widehat{T}:X\widehat{\boxtimes }_{\pi }E\rightarrow F$ defined by 
\begin{equation*}
\widehat{T}(\delta _{\left( x,y\right) }\boxtimes e)=T(x,e)-T(y,e),
\end{equation*}%
for all $x,y\in X$ and $e\in E.$ In other words, $T=\widehat{T}\circ \sigma
^{LipL},$ ensuring that the following diagram commutes 
\begin{equation*}
\begin{array}{ccc}
X\times E & \overset{T}{\longrightarrow } & F \\ 
\sigma ^{LipL}\downarrow & \nearrow \widehat{T} &  \\ 
X\widehat{\boxtimes }_{\pi }E &  & 
\end{array}%
\end{equation*}%
In this case, we have%
\begin{equation*}
LipL\left( T\right) =\left\Vert \widehat{T}\right\Vert .
\end{equation*}
\end{theorem}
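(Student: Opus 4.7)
The plan is to avoid constructing $\widehat{T}$ directly on simple tensors (where well-definedness on different representations would be the sticking point) and instead assemble it from the universal properties that have already been recorded in the preliminaries, namely the linearization identity $Lip_{0}(X;G)=\mathcal{L}(\mathcal{F}(X);G)$ from (\ref{13}), the projective lift $\mathcal{B}(\mathcal{F}(X)\times E;F)=\mathcal{L}(\mathcal{F}(X)\widehat{\otimes}_{\pi}E;F)$ from (\ref{1,8}), and the canonical isometry $I:X\widehat{\boxtimes}_{\pi}E\to\mathcal{F}(X)\widehat{\otimes}_{\pi}E$ from (\ref{1,5})--(\ref{1,7}).

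More precisely, I would first invoke Proposition 2.4 to pass from $T$ to the Lipschitz map $A_{T}\in Lip_{0}(X;\mathcal{L}(E;F))$ with $Lip(A_{T})=LipL(T)$. Applying (\ref{13}) with $G=\mathcal{L}(E;F)$ yields a unique linear operator $(A_{T})_{L}\in\mathcal{L}(\mathcal{F}(X);\mathcal{L}(E;F))$ with $\|(A_{T})_{L}\|=Lip(A_{T})$ and $(A_{T})_{L}\circ\delta_{X}=A_{T}$. Uncurrying, this is a bounded bilinear operator $\widetilde{B}:\mathcal{F}(X)\times E\to F$, $\widetilde{B}(m,e)=(A_{T})_{L}(m)(e)$, with $\|\widetilde{B}\|=LipL(T)$. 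The projective universal property (\ref{1,8}) then produces a unique $\widehat{B}\in\mathcal{L}(\mathcal{F}(X)\widehat{\otimes}_{\pi}E;F)$ with $\|\widehat{B}\|=\|\widetilde{B}\|$ and $\widehat{B}(m\otimes e)=\widetilde{B}(m,e)$. Finally I set $\widehat{T}:=\widehat{B}\circ I$, which belongs to $\mathcal{L}(X\widehat{\boxtimes}_{\pi}E;F)$ with $\|\widehat{T}\|=\|\widehat{B}\|=LipL(T)$, since $I$ is an isometry.

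It remains to verify the explicit formula. On a basic tensor, tracing the construction gives
\begin{equation*}
\widehat{T}(\delta_{(x,y)}\boxtimes e)=\widehat{B}(\delta_{(x,y)}\otimes e)=\widetilde{B}(\delta_{(x,y)},e)=(A_{T})_{L}(\delta_{x}-\delta_{y})(e),
\end{equation*}
and linearity of $(A_{T})_{L}$ together with $(A_{T})_{L}(\delta_{z})=A_{T}(z)$ yields $A_{T}(x)(e)-A_{T}(y)(e)=T(x,e)-T(y,e)$. In particular, $\widehat{T}\circ\sigma^{LipL}(x,e)=\widehat{T}(\delta_{(x,0)}\boxtimes e)=T(x,e)-T(0,e)=T(x,e)$, so the diagram commutes. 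Uniqueness is automatic because the elements $\delta_{(x,y)}\boxtimes e$ have dense linear span in $X\widehat{\boxtimes}_{\pi}E$.

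I expect no serious obstacle, precisely because the three identifications listed above do all the heavy lifting; the only place where one could naively run into trouble is well-definedness on representations $\sum_{i}\delta_{(x_{i},y_{i})}\boxtimes e_{i}$, and this is exactly what the passage through $\mathcal{F}(X)\widehat{\otimes}_{\pi}E$ via $I$ sidesteps. If one prefers a direct argument, the estimate $\|\widehat{T}\|\le LipL(T)$ is obtained by bounding $\|\sum_{i}T(x_{i},e_{i})-T(y_{i},e_{i})\|\le LipL(T)\sum_{i}d(x_{i},y_{i})\|e_{i}\|$ and taking the infimum to reach $\pi$, while the reverse inequality $LipL(T)\le\|\widehat{T}\|$ follows from $T=\widehat{T}\circ\sigma^{LipL}$ combined with Lemma \ref{Lemme1}, which gives $LipL(\sigma^{LipL})=1$.
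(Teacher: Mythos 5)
Your proof is correct, but it takes a genuinely different route from the paper's. The paper constructs $\widehat{T}$ directly: it defines $\widehat{T}(u)=\sum_{i}T(x_{i},e_{i})-T(y_{i},e_{i})$ on a representation $u=\sum_{i}\delta_{(x_{i},y_{i})}\boxtimes e_{i}$, derives $\Vert\widehat{T}(u)\Vert\leq LipL(T)\sum_{i}d(x_{i},y_{i})\Vert e_{i}\Vert$, passes to the infimum over representations to get $\Vert\widehat{T}(u)\Vert\leq LipL(T)\pi(u)$, obtains the reverse inequality $LipL(T)\leq\Vert\widehat{T}\Vert$ by evaluating at $\delta_{(x,y)}\boxtimes e$, and then extends to the completion; it also verifies linearity and surjectivity of $T\mapsto\widehat{T}$, which your argument omits but which are not actually part of the statement being proved. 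You instead assemble $\widehat{T}=\widehat{B}\circ I$ by chaining the norm-preserving identifications $T\mapsto A_{T}\mapsto (A_{T})_{L}\mapsto\widetilde{B}\mapsto\widehat{B}$ through $(\ref{13})$, $(\ref{1,8})$ and $(\ref{1,7})$, so the norm identity comes for free. What your route buys is precisely the point you flag: the paper's direct definition tacitly assumes that $\sum_{i}T(x_{i},e_{i})-T(y_{i},e_{i})$ depends only on $u$ and not on its representation (the infimum argument only makes sense once this is known), whereas factoring through $\mathcal{F}(X)\widehat{\otimes}_{\pi}E$ makes well-definedness automatic. The trade-off is that you lean on the three identifications as black boxes while the paper's computation is self-contained; your closing remark essentially reconstructs the paper's direct estimate as the alternative. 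One cosmetic slip: the result giving $Lip(A_{T})=LipL(T)$ is the second proposition of Section 2, not the corollary you cite, but nothing in the argument depends on that.
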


\begin{proof}
Let us first show that $\widehat{T}$ is bounded with respect to the
Lipschitz projective norm on $X\boxtimes E$. For any element $%
u=\sum_{i=1}^{n}\delta _{\left( x_{i},y_{i}\right) }\boxtimes e_{i}\in
X\boxtimes E,$ we have 
\begin{eqnarray*}
\left\Vert \widehat{T}\left( u\right) \right\Vert &=&\left\Vert
\sum_{i=1}^{n}T\left( x_{i},e_{i}\right) -T\left( y_{i},e_{i}\right)
\right\Vert \\
&\leq &\sum_{i=1}^{n}\left\Vert T\left( x_{i},e_{i}\right) -T\left(
y_{i},e_{i}\right) \right\Vert \\
&\leq &LipL\left( T\right) \sum_{i=1}^{n}d\left( x_{i},y_{i}\right)
\left\Vert e_{i}\right\Vert .
\end{eqnarray*}%
Since this holds for every representation of $u,$ it follows that%
\begin{equation*}
\Vert \widehat{T}(u)\Vert \leq LipL(T)\pi (u).
\end{equation*}%
Thus, $\widehat{T}$ is bounded, and we have $\left\Vert \widehat{T}%
\right\Vert \leq LipL(T).$ Conversely, we have 
\begin{eqnarray*}
\Vert T(x,e)-T(y,e)\Vert &=&\Vert \widehat{T}(\delta _{\left( x,y\right)
}\boxtimes e)\Vert \\
&\leq &\Vert \widehat{T}\Vert d(x,y)\Vert e\Vert
\end{eqnarray*}%
which implies that $LipL(T)\leq \Vert \widehat{T}\Vert .$ Therefore, the
operator $\widehat{T}:X\boxtimes _{\pi }E\rightarrow F$ has a unique
extension to $X\widehat{\boxtimes }_{\pi }E$ with the same norm. The mapping 
$T\mapsto \widehat{T}$ is clearly a linear isometry. For scalars $\alpha $
and $\beta ,$ we have 
\begin{eqnarray*}
\widehat{\alpha S+\beta T}\left( u\right) &=&\widehat{\alpha S+\beta T}%
(\sum_{i=1}^{n}\delta _{\left( x_{i},y_{i}\right) }\boxtimes e_{i}) \\
&=&\sum_{i=1}^{n}\left( \alpha S+\beta T\right) \left( x_{i},e_{i}\right)
-\left( \alpha S+\beta T\right) \left( y_{i},e_{i}\right) \\
&=&\sum_{i=1}^{n}\alpha \left( \widehat{S}\left( \delta _{\left(
x_{i},y_{i}\right) }\boxtimes e_{i}\right) \right) +\beta \widehat{T}\left(
\delta _{\left( x_{i},y_{i}\right) }\boxtimes e_{i}\right) \\
&=&\alpha \widehat{S}(\sum_{i=1}^{n}\delta _{\left( x_{i},y_{i}\right)
}\boxtimes e_{i})+\beta \widehat{T}(\sum_{i=1}^{n}\delta _{\left(
x_{i},y_{i}\right) }\boxtimes e_{i}).
\end{eqnarray*}%
Thus, we conclude that%
\begin{equation*}
\widehat{\alpha S+\beta T}\left( u\right) =\left( \alpha \widehat{S}+\beta 
\widehat{T}\right) \left( u\right) .
\end{equation*}%
It remains to show that this mapping is surjective. Let $A\in \mathcal{L}(X%
\widehat{\boxtimes }_{\pi }E;F)$. The Lip-Linear operator defined by 
\begin{equation*}
T(x,e)=A(\delta _{\left( x,0\right) }\boxtimes e)
\end{equation*}%
satisfies $\widehat{T}=A,$ completing the proof.
\end{proof}

The previous theorem establishes the following isometric identification 
\begin{equation}
LipL_{0}(X\times E;F)=\mathcal{L}(X\widehat{\boxtimes }_{\pi }E;F),
\label{2.4}
\end{equation}%
via the correspondence $T\leftrightarrow \widehat{T}$. Consequently, from $(%
\ref{1,8})$, it follows that $LipL_{0}(X\times E;F)$ is isometrically
isomorphic to $\mathcal{B}(\mathcal{F}(X)\times E;F).$ This implies that for
every Lip-Linear operator $T:X\times E\longrightarrow F$, there exists a
unique bilinear operator $T_{bil}:\mathcal{F}\left( X\right) \times
E\longrightarrow F$ such that the following diagram commutes%
\begin{equation}
\begin{array}{ccccc}
X\times E & \overset{T}{\longrightarrow } & F &  &  \\ 
\delta _{X}\times id_{E}\downarrow & T_{bil}\nearrow & \overline{T_{bil}}%
\uparrow & \nwarrow \widehat{T} &  \\ 
\text{$\mathcal{F}$}\left( X\right) \times E & \overset{\sigma _{2}}{%
\longrightarrow } & \text{$\mathcal{F}$}\left( X\right) \widehat{\otimes }%
_{\pi }E & \overset{I^{-1}}{\longrightarrow } & X\widehat{\boxtimes }_{\pi }E%
\end{array}
\label{2.5}
\end{equation}%
In other words, $T_{bil}=\widehat{T}\circ I^{-1}\circ \sigma _{2}.$ This
implies that the linearization of $T_{LipL}$ satisfies $\overline{T_{bil}}=%
\widehat{T}\circ I^{-1}.$ Taking $F=\mathbb{K}$ in $(\ref{2.4}),$ we find $%
LipL_{0}(X\times E)=(X\widehat{\boxtimes }_{\pi }E)^{\ast }.$ This duality
leads to a new formula for the Lipschitz projective norm. For each $u\in X%
\widehat{\boxtimes }_{\pi }E,$ we have%
\begin{equation*}
\pi \left( u\right) =\sup \left\{ \left\vert \left\langle u,T\right\rangle
\right\vert :T\in LipL_{0}(X\times E)\text{ and }LipL(T)\leq 1\right\} .
\end{equation*}

According to \cite[Page 11]{dk}, and the formula $\left( \ref{13}\right) ,$
we have the following identification.

\begin{corollary}
Let $X$ be a pointed metric space, and let $E,F$ be Banach spaces. The
following isometric identifications hold%
\begin{equation*}
LipL_{0}(X\times E;F)=\mathcal{L}(\mathcal{F}(X);\mathcal{L}\left(
E;F\right) )=Lip_{0}(X;\mathcal{L}\left( E;F\right) ).
\end{equation*}
\end{corollary}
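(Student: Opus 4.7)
My plan is to obtain the two identifications by chaining results already proved in the paper, so that no new technical machinery is required. The key observation is that the Proposition characterizing Lip-Linear operators via the associated map $A_{T}$ already delivers a natural candidate for the first isomorphism: the correspondence $T \longleftrightarrow A_{T}$, where $A_{T}(x)(e) = T(x,e)$.

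First, I would establish the isometry $LipL_{0}(X\times E;F) = Lip_{0}(X;\mathcal{L}(E;F))$ directly via this correspondence. The Proposition already gives $LipL(T) = Lip(A_{T})$, and one easily checks that $A_{T}(0) = 0$, so $A_{T} \in Lip_{0}(X;\mathcal{L}(E;F))$. Linearity of $T \mapsto A_{T}$ is immediate, and the map is surjective: given $R \in Lip_{0}(X;\mathcal{L}(E;F))$, the mapping $T_{R}(x,e) := R(x)(e)$ lies in $LipL_{0}(X\times E;F)$ with $A_{T_{R}} = R$. Thus the two spaces coincide isometrically.

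Second, I would apply the standard linearization identification $(\ref{13})$, namely $Lip_{0}(X;Z) = \mathcal{L}(\mathcal{F}(X);Z)$, taking $Z = \mathcal{L}(E;F)$. This yields $Lip_{0}(X;\mathcal{L}(E;F)) = \mathcal{L}(\mathcal{F}(X);\mathcal{L}(E;F))$ via $R \leftrightarrow R_{L}$ with $R = R_{L}\circ \delta_{X}$ and $Lip(R) = \Vert R_{L}\Vert$. Composing the two isomorphisms, the Lip-Linear operator $T$ corresponds to the unique linear operator $u \in \mathcal{L}(\mathcal{F}(X);\mathcal{L}(E;F))$ satisfying $u(\delta_{x})(e) = T(x,e)$ for every $x\in X$ and $e\in E$.

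There is no genuine obstacle here: the substantive work was already carried out in the Proposition on the characterization of Lip-Linear operators through $A_{T}$, and in the Godefroy–Kalton linearization $(\ref{13})$. The proof reduces to a brief verification that the correspondence $T \mapsto A_{T}$ is a well-defined surjective linear isometry, followed by the invocation of $(\ref{13})$ with values in the Banach space $\mathcal{L}(E;F)$.
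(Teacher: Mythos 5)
Your proof is correct, but it takes a different route from the paper's. The paper obtains the corollary as a one-line consequence of the tensor-product machinery: it combines the linearization theorem $LipL_{0}(X\times E;F)=\mathcal{L}(X\widehat{\boxtimes }_{\pi }E;F)$ from $(\ref{2.4})$ with the identification $X\widehat{\boxtimes }_{\pi }E=\mathcal{F}(X)\widehat{\otimes }_{\pi }E$ and the classical fact $\mathcal{L}(E\widehat{\otimes }_{\pi }F;G)=\mathcal{L}(E;\mathcal{L}(F;G))$ cited from \cite[Page 11]{dk}, and then invokes $(\ref{13})$. You instead bypass the tensor product entirely: the correspondence $T\leftrightarrow A_{T}$, whose isometry $LipL(T)=Lip(A_{T})$ is already proved in the proposition characterizing Lip-Linear operators, gives $LipL_{0}(X\times E;F)=Lip_{0}(X;\mathcal{L}(E;F))$ directly once one supplies the (easy) surjectivity via $T_{R}(x,e)=R(x)(e)$, and then a single application of $(\ref{13})$ with values in $\mathcal{L}(E;F)$ finishes the argument. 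Your route is more self-contained, since it needs no external reference to the Defant--Floret identity and makes the underlying bijection completely explicit; the paper's route is shorter on the page but leans on the previously built identification $(\ref{2.4})$ and the cited classical result. Both yield the same composite correspondence $u(\delta_{x})(e)=T(x,e)$, and there is no gap in your argument.
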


\begin{proposition}
Let $X,Y$ be pointed metric spaces, and $F$ a Banach space. If $T:X\times
Y\rightarrow F$ is a two-Lipschitz operator, then there exists a unique
Lip-Linear operator $T_{LipL}:X\times \mathcal{F}\left( Y\right) \rightarrow
F$ such that%
\begin{equation*}
T=T_{LipL}\circ \left( id_{X},\delta _{Y}\right) .
\end{equation*}%
Conversely, if $T:X\times \mathcal{F}\left( Y\right) \rightarrow F$ is a
Lip-Linear operator, then there exists a unique two-Lipschitz operator $%
T_{Blip}:X\times Y\rightarrow F$ such that%
\begin{equation*}
\left( T_{Blip}\right) _{LipL}=T.
\end{equation*}%
Additionally, we have the following isometric identification%
\begin{equation*}
BLip_{0}\left( X\times Y;F\right) =LipL_{0}(X\times \mathcal{F}(X);F)
\end{equation*}
\end{proposition}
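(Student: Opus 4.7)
The plan is to build the correspondence between $BLip_0(X\times Y;F)$ and $LipL_0(X\times \mathcal{F}(Y);F)$ by applying the universal linearization property of the Lipschitz-free space $\mathcal{F}(Y)$ in the second coordinate. Given $T\in BLip_0(X\times Y;F)$, I would first observe that for each fixed $x\in X$ the partial map $T(x,\cdot):Y\to F$ satisfies $T(x,0)=0$, and, choosing $y'=0$ in the two-Lipschitz inequality together with $T(x,0)=T(x',0)=0$, one gets $\|T(x,y)-T(x',y)\|\leq Blip(T)\,d(x,x')\,d(y,0)$, so in particular $T(x,\cdot)\in Lip_0(Y;F)$ with $Lip(T(x,\cdot))\leq Blip(T)\,d(x,0)$. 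The isometric identification $(\ref{13})$ then provides a unique linear extension $(T(x,\cdot))_L\in \mathcal{L}(\mathcal{F}(Y);F)$, and I would define
\[
T_{LipL}(x,\mu):=(T(x,\cdot))_L(\mu),\qquad (x,\mu)\in X\times\mathcal{F}(Y).
\]
Linearity in $\mu$ and the vanishing $T_{LipL}(0,\mu)=0$ are then built in, and $T_{LipL}(x,\delta_y)=T(x,y)$ gives $T=T_{LipL}\circ(id_X,\delta_Y)$.

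Next I would verify the Lip-linearity of $T_{LipL}$ together with the isometric equality $Blip(T)=LipL(T_{LipL})$. For any finite representation $\mu=\sum_i \lambda_i \delta_{(y_i,y_i')}$, linearity in $\mu$ gives
\[
T_{LipL}(x,\mu)-T_{LipL}(x',\mu)=\sum_i \lambda_i\bigl[T(x,y_i)-T(x,y_i')-T(x',y_i)+T(x',y_i')\bigr],
\]
and the two-Lipschitz bound yields $\|T_{LipL}(x,\mu)-T_{LipL}(x',\mu)\|\leq Blip(T)\,d(x,x')\sum_i|\lambda_i|d(y_i,y_i')$. Taking the infimum over representations of $\mu$ produces $LipL(T_{LipL})\leq Blip(T)$. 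For the reverse direction, choosing $\mu=\delta_{(y,y')}$ (which has norm at most $d(y,y')$ in $\mathcal{F}(Y)$) directly recovers the two-Lipschitz difference and yields $Blip(T)\leq LipL(T_{LipL})$.

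For the converse I would, given $S\in LipL_0(X\times\mathcal{F}(Y);F)$, set $S_{Blip}(x,y):=S(x,\delta_y)$ and use linearity in the second slot to compute
\[
S_{Blip}(x,y)-S_{Blip}(x,y')-S_{Blip}(x',y)+S_{Blip}(x',y')=S(x,\delta_{(y,y')})-S(x',\delta_{(y,y')}),
\]
which is bounded by $LipL(S)\,d(x,x')\,d(y,y')$, so $S_{Blip}\in BLip_0(X\times Y;F)$ with $Blip(S_{Blip})\leq LipL(S)$. The identity $(S_{Blip})_{LipL}=S$ follows because both sides are linear in the second variable and agree on pairs $(x,\delta_y)$, and the span of the evaluation functionals is dense in $\mathcal{F}(Y)$ while $S(x,\cdot)$ and $(S_{Blip})_{LipL}(x,\cdot)$ are continuous linear. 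Uniqueness of $T_{LipL}$ in the first part follows from the same density argument together with uniqueness of the linearization in $(\ref{13})$.

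The main technical point, rather than a genuine obstacle, is the passage from finite combinations $\sum\lambda_i\delta_{(y_i,y_i')}$ to arbitrary elements of $\mathcal{F}(Y)$: one has to check that the estimate on finite sums combined with the infimum definition of the norm on $\mathcal{F}(Y)$ delivers both the Lipschitz constant in the first coordinate and the norm equality in one stroke; this is handled by density of the linear span of Dirac differences together with continuity of the linear extensions $(T(x,\cdot))_L$. Combining the two directions and the matching norm estimates then yields the isometric identification $BLip_0(X\times Y;F)=LipL_0(X\times \mathcal{F}(Y);F)$ via $T\leftrightarrow T_{LipL}$.
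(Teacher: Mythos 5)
Your proof is correct, but it takes a genuinely different and more elementary route than the paper. The paper reaches the correspondence through the full linearization machinery: it passes from $T\in BLip_{0}(X\times Y;F)$ to its two-Lipschitz linearization $\widetilde{T}:\mathcal{F}(X)\widehat{\otimes }_{\pi }\mathcal{F}(Y)\rightarrow F$ (quoting the factorization theorem of Hamidi et al.), then pulls back through the isometries $I$ and $\sigma ^{LipL}$ and the identification $LipL_{0}(X\times E;F)=\mathcal{L}(X\widehat{\boxtimes }_{\pi }E;F)$ to define $T_{LipL}=\widetilde{T}\circ I\circ \sigma ^{LipL}$, with the converse handled symmetrically. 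You instead linearize only in the second variable: for each fixed $x$ you observe $T(x,\cdot )\in Lip_{0}(Y;F)$, apply the universal property of $\mathcal{F}(Y)$ to get $(T(x,\cdot ))_{L}$, and set $T_{LipL}(x,\mu )=(T(x,\cdot ))_{L}(\mu )$, verifying the Lip-linearity and the isometry by hand on finite combinations of Dirac differences and extending by density. Your approach is self-contained (it never invokes the two-Lipschitz factorization theorem or the tensor identifications), and it makes the norm equality $Blip(T)=LipL(T_{LipL})$ completely transparent via the infimum formula for the norm on $\mathcal{F}(Y)$; the paper's approach buys compatibility with the diagrams relating $\widehat{T}$, $\widetilde{T}$ and $T_{bil}$ that are used in the corollary immediately following the proposition. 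One cosmetic slip: in your first step you display the inequality obtained by setting $y'=0$, which controls $\Vert T(x,y)-T(x',y)\Vert$, but then conclude that $T(x,\cdot )$ is Lipschitz with constant $Blip(T)d(x,0)$ --- that conclusion requires the symmetric choice $x'=0$, giving $\Vert T(x,y)-T(x,y')\Vert \leq Blip(T)d(x,0)d(y,y')$; the fix is immediate and does not affect the argument.
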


\begin{proof}
Let $T\in BLip_{0}\left( X\times Y;F\right) ,$ and let $\widetilde{T}:%
\mathcal{F}(X)\widehat{\otimes }_{\pi }\mathcal{F}(Y)\rightarrow F$ be its
linearization, i.e., 
\begin{equation*}
T=\widetilde{T}\circ \sigma _{2}\circ \left( \delta _{X},\delta _{Y}\right) .
\end{equation*}%
By $\left( \ref{2.4}\right) ,$ there exists a unique Lip-Linear operator $%
T_{LipL}:X\times \mathcal{F}(Y)\rightarrow F$ such that%
\begin{equation*}
T_{LipL}=\widetilde{T}\circ I\circ \sigma ^{LipL},
\end{equation*}%
ensuring the commutativity of the following diagram%
\begin{equation*}
\begin{array}{ccc}
\mathcal{F}(X)\widehat{\otimes }_{\pi }\text{$\mathcal{F}$}\left( Y\right) & 
\overset{\widetilde{T}}{\rightarrow } & F \\ 
\uparrow I &  & \uparrow T_{LipL} \\ 
X\widehat{\boxtimes }_{\pi }\text{$\mathcal{F}$}\left( Y\right) & \overset{%
\sigma ^{LipL}}{\longleftarrow } & X\times \text{$\mathcal{F}$}\left(
Y\right)%
\end{array}%
\end{equation*}%
where $I$ is the isometric isomorphism defined in $(\ref{1,7}).$ It follows
that $LipL\left( T_{LipL}\right) =\left\Vert \widetilde{T}\right\Vert ,$ and 
\begin{equation*}
T_{LipL}\circ \left( id_{X},\delta _{Y}\right) =\widetilde{T}\circ I\circ
\sigma ^{LipL}\circ \left( id_{X},\delta _{Y}\right) .
\end{equation*}%
We have $\widetilde{T}\circ I\circ \sigma ^{LipL}\circ \left( id_{X},\delta
_{Y}\right) =T.$ Indeed, for $x\in X$ and $e\in E,$ we have%
\begin{eqnarray*}
\widetilde{T}\circ I\circ \sigma ^{LipL}\circ \left( id_{X},\delta
_{Y}\right) \left( x,y\right) &=&\widetilde{T}\circ I\circ \sigma
^{LipL}\circ \left( x,\delta _{y}\right) \\
&=&\widetilde{T}\circ I\circ \left( \delta _{x}\boxtimes \delta _{y}\right)
\\
&=&\widetilde{T}\left( \delta _{x}\otimes \delta _{y}\right) \\
&=&\widetilde{T}\circ \sigma _{2}\circ \left( \delta _{X},\delta _{Y}\right)
\left( x,y\right) \\
&=&T\left( x,y\right) .
\end{eqnarray*}%
For the converse, let $T:X\times \mathcal{F}\left( Y\right) \rightarrow F$
be a Lip-Linear operator. Define%
\begin{equation*}
T_{Blip}=\widehat{T}\circ I\circ \sigma ^{LipL}
\end{equation*}%
where $\widehat{T}:X\widehat{\boxtimes }_{\pi }\mathcal{F}\left( Y\right)
\rightarrow F$ is the linearization of the Lip-Linear operator $T.$ By \cite[%
Remark 2.7]{hadt}, there exists a unique two-Lipschitz operator $%
T_{Blip}:X\times Y\rightarrow F$ such that%
\begin{equation*}
\widehat{T}\circ I^{-1}\circ \sigma _{2}\circ \left( \delta _{X},\delta
_{Y}\right) =T_{Blip}
\end{equation*}%
with $Blip\left( T_{Blip}\right) =\left\Vert \widehat{T}\right\Vert .$ We
have $\widetilde{T_{Blip}}=\widehat{T}\circ I^{-1}$ and 
\begin{equation*}
\left( T_{Blip}\right) _{LipL}=\widetilde{T_{Blip}}\circ I\circ \sigma
^{LipL}=\widehat{T}\circ I^{-1}\circ I\circ \sigma ^{LipL}=T.
\end{equation*}
\end{proof}

\begin{corollary}
Let $T:X\times E\rightarrow F$ be a Lip-Linear operator, $\widehat{T}$ its
linearization and $\widetilde{T}$ its linearization as a two-Lipschitz
operator. We have%
\begin{equation*}
\widetilde{T}=\widehat{T}\circ I^{-1}\circ id_{\text{$\mathcal{F}$}\left(
X\right) }\otimes \beta _{E},
\end{equation*}%
which implies that the following diagram commutes%
\begin{equation*}
\begin{array}{ccc}
\text{$\mathcal{F}$}\left( X\right) \widehat{\otimes }_{\pi }\text{$\mathcal{%
F}$}\left( E\right) & \overset{\widetilde{T}}{\longrightarrow } & F \\ 
id_{\text{$\mathcal{F}$}\left( X\right) }\otimes \beta _{E}\downarrow &  & 
\uparrow \widehat{T} \\ 
\text{$\mathcal{F}$}\left( X\right) \widehat{\otimes }_{\pi }E & \overset{%
I^{-1}}{\longrightarrow } & X\widehat{\boxtimes }_{\pi }E%
\end{array}%
\end{equation*}%
where $\beta _{E}$ is the quotient map given in $\left( \ref{14}\right) .$
\end{corollary}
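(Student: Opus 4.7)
The plan is to observe that both $\widetilde{T}$ and $\widehat{T}\circ I^{-1}\circ (id_{\mathcal{F}(X)}\otimes \beta_E)$ are continuous linear operators from $\mathcal{F}(X)\widehat{\otimes}_\pi \mathcal{F}(E)$ to $F$, so by continuity and density it suffices to verify equality on the elementary tensors $\delta_x\otimes \delta_e$ (the closed linear span of $\{\delta_x\otimes \delta_e : x\in X,\, e\in E\}$ is dense in $\mathcal{F}(X)\widehat{\otimes}_\pi \mathcal{F}(E)$, since $\overline{\mathrm{span}}\{\delta_x\}=\mathcal{F}(X)$ and $\overline{\mathrm{span}}\{\delta_e\}=\mathcal{F}(E)$, and the projective tensor norm is cross).

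First, I would record the two relevant factorizations of $T$ already available in the paper. On the one hand, Theorem \ref{IdentLinearisation} provides $T=\widehat{T}\circ \sigma^{LipL}$, that is, $T(x,e)=\widehat{T}(\delta_{(x,0)}\boxtimes e)$. On the other hand, since (\ref{22}) ensures $T\in BLip_0(X\times E;F)$, the factorization recalled just before Theorem \ref{IdentLinearisation} (from \cite[Remark 2.7]{hadt}) yields $T=\widetilde{T}\circ \sigma_2\circ (\delta_X,\delta_E)$, i.e.\ $T(x,e)=\widetilde{T}(\delta_x\otimes \delta_e)$.

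Next, I would chase the diagram on elementary tensors. Using $\beta_E\circ \delta_E=id_E$ together with the definition (\ref{1,7}) of $I$, I compute
\begin{equation*}
\widehat{T}\circ I^{-1}\circ (id_{\mathcal{F}(X)}\otimes \beta_E)(\delta_x\otimes \delta_e)
=\widehat{T}\circ I^{-1}(\delta_x\otimes e)
=\widehat{T}(\delta_{(x,0)}\boxtimes e)
=T(x,e),
\end{equation*}
which by the previous paragraph equals $\widetilde{T}(\delta_x\otimes \delta_e)$. Hence the two operators agree on every elementary tensor $\delta_x\otimes \delta_e$.

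Finally, both operators are linear, so they coincide on the algebraic span of such tensors; and both are bounded (the norms of $\widehat{T}$, $I^{-1}$, and $id_{\mathcal{F}(X)}\otimes \beta_E$ are all finite, and $\widetilde{T}$ is continuous by construction), so the equality extends by density to the whole completion $\mathcal{F}(X)\widehat{\otimes}_\pi \mathcal{F}(E)$. There is no real obstacle here beyond checking that $I$, $\beta_E$ and the projective tensor construction interact correctly on elementary tensors; the only point requiring a little care is noting that $\delta_{(x,0)}=\delta_x-\delta_0=\delta_x$ in $\mathcal{F}(X)$, which is what makes the identity $I(\delta_{(x,0)}\boxtimes e)=\delta_x\otimes e$ true and closes the diagram.
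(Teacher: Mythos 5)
Your proposal is correct and follows essentially the same route as the paper: you evaluate both operators on the generating elements $\delta_x\otimes\delta_e$ (equivalently, precompose with $\sigma_2\circ(\delta_X,\delta_E)$), obtain $T(x,e)$ on both sides via $\beta_E\circ\delta_E=id_E$ and $I^{-1}(\delta_{(x,0)}\otimes e)=\delta_{(x,0)}\boxtimes e$, and conclude by uniqueness. The only cosmetic difference is that you spell out the density/cross-norm argument explicitly where the paper simply invokes the uniqueness clause of the two-Lipschitz linearization from \cite[Remark 2.7]{hadt}.
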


\begin{proof}
Let $x\in X$ and $e\in E.$ We have%
\begin{eqnarray*}
&&\widehat{T}\circ I^{-1}\circ id_{\text{$\mathcal{F}$}\left( X\right)
}\otimes \beta _{E}\circ \sigma _{2}\circ \left( \delta _{X},\delta
_{E}\right) \left( x,e\right) \\
&=&\widehat{T}\circ I^{-1}\circ id_{\text{$\mathcal{F}$}\left( X\right)
}\otimes \beta _{E}\left( \delta _{\left( x,0\right) }\otimes \delta
_{\left( e,0\right) }\right) \\
&=&\widehat{T}\circ I^{-1}\circ id_{\text{$\mathcal{F}$}\left( X\right)
}\left( \delta _{\left( x,0\right) }\right) \otimes \beta _{E}\left( \delta
_{\left( e,0\right) }\right) \\
&=&\widehat{T}\circ I^{-1}\circ \left( \delta _{\left( x,0\right) }\otimes
e\right) \\
&=&\widehat{T}\circ \left( \delta _{\left( x,0\right) }\boxtimes e\right) =%
\widehat{T}\circ \sigma ^{LipL}\left( x,e\right) =T\left( x,e\right) .
\end{eqnarray*}%
Due to the uniqueness of the bounded linear operator satisfying this
relationship, we conclude that%
\begin{equation*}
\widetilde{T}=\widehat{T}\circ I^{-1}\circ id_{\text{$\mathcal{F}$}\left(
X\right) }\otimes \beta _{E}.
\end{equation*}
\end{proof}

\section{\textsc{Integral Lip-Linear operators and the dual of }$X\widehat{%
\boxtimes }_{\protect\varepsilon }E$}

Let $E$ and $F$ be Banach spaces, and let $E\widehat{\otimes }_{\varepsilon
}F$ denote their injective tensor product. In the classical case, it is
well-known that the dual space of $E\widehat{\otimes }_{\varepsilon }F$
coincides with the space of integral bilinear forms on $E\times F.$
Motivated by this fact, we introduce a new definition of integral Lip-Linear
operators and establish that their linearization operators are integral.
Among our results, we prove that the dual space of $X\widehat{\boxtimes }%
_{\varepsilon }E$ coincides with the space of integral Lip-Linear forms. Let 
$X$ be a pointed metric space, and let $E,F$ be Banach space. A Lip-Linear
operator $T:X\times E\rightarrow F$ is said to be integral if there exists a
constant $C>0$ such that for any $x_{1},\ldots ,x_{n},y_{1},\ldots ,y_{n}$
in $X$, $e_{1},\ldots ,e_{n}$ in $E$ and $z_{1}^{\ast },...,z_{n}^{\ast }\in
F^{\ast }$ we have 
\begin{equation}
\left\vert \dsum\limits_{i=1}^{n}\left\langle
T(x_{i},e_{i})-T(y_{i},e_{i}),z_{i}^{\ast }\right\rangle \right\vert \leq
C\sup_{f\in B_{X^{\#}},\left\Vert e^{\ast }\right\Vert =1}\left\Vert
\dsum\limits_{i=1}^{n}\left( f(x_{i})-f(y_{i})\right) e^{\ast }\left(
e_{i}\right) z_{i}^{\ast }\right\Vert _{F^{\ast }}.  \label{31}
\end{equation}%
The class of all integral Lip-Linear operators is denoted by $%
LipL_{0}^{int}(X\times E;F)$. For such operators, we define%
\begin{equation*}
\mathfrak{I}_{int}^{LipL}(T)=\inf \left\{ C:C\text{ satisfies }(\ref{31}%
)\right\} .
\end{equation*}%
It is straightforward to verify that the pair $\left( LipL_{0}^{int}(X\times
E;F),\mathfrak{I}_{int}^{LipL}(\cdot )\right) $ forms a Banach space. The
importance of this new definition lies in its connection with the
linearization operators, allowing the extension of certain classical
results. The next theorem establishes this relationship.

\begin{theorem}
\label{TheoremFond} Let $X$ be a pointed metric space, and let $E,F$ be
Banach spaces. The following properties are equivalent:

1) The Lip-Linear operator $T$ is integral.

2) The linearization $\widehat{T}:X\widehat{\boxtimes }_{\varepsilon
}E\longrightarrow F$ is intgral.
\end{theorem}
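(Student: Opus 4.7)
The plan is to prove the equivalence by comparing the integrality inequality (\ref{31}) for $T$ with the classical integrality inequality for $\widehat{T}$ viewed as a linear operator on $X\widehat{\boxtimes }_{\varepsilon }E$. The two conditions look different because the first supremum ranges only over elementary pairs $(f,e^{\ast })\in B_{X^{\#}}\times B_{E^{\ast }}$, while the second ranges over the entire dual ball $B_{(X\widehat{\boxtimes }_{\varepsilon }E)^{\ast }}$; the heart of the argument is therefore to show that these two suprema coincide.

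The starting observation is that by Theorem \ref{IdentLinearisation},
\[\sum_{i}\langle T(x_{i},e_{i})-T(y_{i},e_{i}),z_{i}^{\ast }\rangle =\sum_{i}\langle \widehat{T}(u_{i}),z_{i}^{\ast }\rangle ,\qquad u_{i}:=\delta _{(x_{i},y_{i})}\boxtimes e_{i}.\]
The key technical step is then the following identification: for any finite families $(u_{i})\subset X\boxtimes E$ and $(z_{i}^{\ast })\subset F^{\ast }$,
\[\sup_{f\in B_{X^{\#}},\,\Vert e^{\ast }\Vert \leq 1}\Big\Vert \sum_{i}(f(x_{i})-f(y_{i}))e^{\ast }(e_{i})z_{i}^{\ast }\Big\Vert _{F^{\ast }}=\sup_{g^{\ast }\in B_{(X\widehat{\boxtimes }_{\varepsilon }E)^{\ast }}}\Big\Vert \sum_{i}g^{\ast }(u_{i})z_{i}^{\ast }\Big\Vert _{F^{\ast }}.\]
I would prove this by first rewriting each $F^{\ast }$-norm via $\Vert v^{\ast }\Vert _{F^{\ast }}=\sup_{w\in B_{F}}|v^{\ast }(w)|$, then moving the scalars $z_{i}^{\ast }(w)$ inside the functionals $g^{\ast }$ (resp.\ inside the elementary functional $\phi _{f,e^{\ast }}:\delta _{(x,y)}\boxtimes e\mapsto (f(x)-f(y))e^{\ast }(e)$) by linearity, and observing that both sides then collapse to $\sup_{w\in B_{F}}\varepsilon \bigl(\sum_{i}z_{i}^{\ast }(w)u_{i}\bigr)$; the non-trivial equality for the left-hand side is exactly the defining formula for the Lipschitz injective norm $\varepsilon (\cdot )$ recalled in the preliminaries.

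With this in hand, both implications become routine. For (1) $\Rightarrow $ (2), applying (\ref{31}) to elementary tensors $u_{i}=\delta _{(x_{i},y_{i})}\boxtimes e_{i}$ and substituting the identification of suprema yields the integral inequality for $\widehat{T}$ on simple tensors; specialising to $n=1$ gives $\Vert \widehat{T}(u)\Vert \leq \mathfrak{I}_{int}^{LipL}(T)\,\varepsilon (u)$, so $\widehat{T}$ extends uniquely from $X\boxtimes E$ to $X\widehat{\boxtimes }_{\varepsilon }E$. The integral inequality is then promoted to arbitrary $u_{i}\in X\boxtimes E$ by writing $u_{i}=\sum_{k}\delta _{(x_{i,k},y_{i,k})}\boxtimes e_{i,k}$ and reindexing (with $z_{i}^{\ast }$ repeated over $k$), and finally to the whole completion by density, since both sides are $\varepsilon $-continuous in each $u_{i}$ thanks to the uniform bound $|g^{\ast }(u-u^{(n)})|\leq \varepsilon (u-u^{(n)})$. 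Conversely, (2) $\Rightarrow $ (1) follows by specialising the integrality of $\widehat{T}$ to the simple tensors $u_{i}=\delta _{(x_{i},y_{i})}\boxtimes e_{i}$ and re-reading the result through the same identification of suprema. A careful tracking of constants throughout yields the norm equality $\mathfrak{I}_{int}^{LipL}(T)=\mathfrak{I}(\widehat{T})$ as a byproduct.

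The main obstacle I anticipate is the suprema-identification: one must verify that enlarging the supremum from the elementary functionals $\phi _{f,e^{\ast }}$ to the full dual ball $B_{(X\widehat{\boxtimes }_{\varepsilon }E)^{\ast }}$ does not change its value, which is precisely the content of the defining formula of $\varepsilon (\cdot )$ on $X\boxtimes E$. Once this is settled, the extension of $\widehat{T}$ and the density argument in (1) $\Rightarrow $ (2) are the remaining points to handle with some care.
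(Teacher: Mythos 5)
Your proposal is correct and follows essentially the same route as the paper: both proofs hinge on the identity $\sup_{f\in B_{X^{\#}},\,e^{\ast}\in B_{E^{\ast}}}\bigl\Vert\sum_{i}(f\boxtimes e^{\ast})(u_{i})z_{i}^{\ast}\bigr\Vert=\sup_{\phi\in B_{(X\widehat{\boxtimes}_{\varepsilon}E)^{\ast}}}\bigl\Vert\sum_{i}\phi(u_{i})z_{i}^{\ast}\bigr\Vert$, after which each implication is a direct translation between (\ref{31}) and the integral inequality for $\widehat{T}$. The paper merely asserts this identity, whereas you correctly sketch its proof via $\Vert v^{\ast}\Vert=\sup_{w\in B_{F}}|v^{\ast}(w)|$ and the defining formula for $\varepsilon(\cdot)$, and you also make explicit the $\varepsilon$-boundedness and density step that the paper leaves implicit.
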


\begin{proof}
$1)\Longrightarrow 2):$ Let $T:X\times E\longrightarrow F$ be an itegral
Lip-Linear operator. For $u_{j}=\sum_{i=1}^{n}\delta _{\left(
x_{i}^{j},y_{i}^{j}\right) }\boxtimes e_{i}^{j}\in X\boxtimes _{\varepsilon
}E$ and $z_{j}^{\ast }\in F^{\ast }\left( 1\leq j\leq m\right) .$ We have 
\begin{eqnarray*}
\left\vert \sum_{j=1}^{m}\left\langle \widehat{T}(u_{j}),z_{j}^{\ast
}\right\rangle \right\vert &=&\left\vert
\sum_{j=1}^{m}\sum_{i=1}^{n}\left\langle
T(x_{i}^{j},e_{i}^{j})-T(y_{i}^{j},e_{i}^{j}),z_{j}^{\ast }\right\rangle
\right\vert \\
&\leq &\mathfrak{I}_{int}^{LipL}\left( T\right) \sup_{f\in
B_{X^{\#}},\left\Vert e^{\ast }\right\Vert =1}\left\Vert
\sum_{j=1}^{m}\sum_{i=1}^{n}\left( f(x_{i}^{j})-f(y_{i}^{j})\right) e^{\ast
}\left( e_{i}^{j}\right) z_{j}^{\ast }\right\Vert _{F^{\ast }} \\
&\leq &\mathfrak{I}_{int}^{LipL}\left( T\right) \sup_{f\in
B_{X^{\#}},\left\Vert e^{\ast }\right\Vert =1}\left\Vert
\sum_{j=1}^{m}f\boxtimes e^{\ast }\left( u_{j}\right) z_{j}^{\ast
}\right\Vert _{F^{\ast }}.
\end{eqnarray*}%
Since 
\begin{equation*}
\sup_{f\in B_{X^{\#}},\left\Vert e^{\ast }\right\Vert =1}\left\Vert
\sum_{j=1}^{m}f\boxtimes e^{\ast }\left( u_{j}\right) z_{j}^{\ast
}\right\Vert _{F^{\ast }}=\sup \left\{ \left\Vert \sum_{j=1}^{m}\phi \left(
u_{j}\right) z_{j}^{\ast }\right\Vert _{F^{\ast }}:\phi \in B_{\left( X%
\widehat{\boxtimes }_{\varepsilon }E\right) ^{\ast }}\right\}
\end{equation*}%
It follows that $\widehat{T}$ is integral, with $\mathfrak{I}\left( \widehat{%
T}\right) \leq \mathfrak{I}_{int}^{LipL}(T).$

$2)\Longrightarrow 1):$ Suppose $\widehat{T}:X\widehat{\boxtimes }%
_{\varepsilon }E\longrightarrow F$ is integral. Let $\left( x_{i}\right)
_{i=1}^{n},\left( y_{i}\right) _{i=1}^{n}\subset X$, $\left( e_{i}\right)
_{i=1}^{n}\subset E$ and $\left( z_{i}^{\ast }\right) _{i=1}^{n}\subset
F^{\ast }$. Then%
\begin{eqnarray*}
\left\vert \dsum\limits_{i=1}^{n}\left\langle
T(x_{i},e_{i})-T(y_{i},e_{i}),z_{i}^{\ast }\right\rangle \right\vert
&=&\left\vert \sum_{i=1}^{n}\left\langle \widehat{T}\left( \delta _{\left(
x_{i},y_{i}\right) }\boxtimes e_{i}\right) ,z_{i}^{\ast }\right\rangle
\right\vert \\
&\leq &\mathfrak{I}\left( \widehat{T}\right) \sup \left\{ \left\Vert
\sum_{i=1}^{n}\phi \left( \delta _{\left( x_{i},y_{i}\right) }\boxtimes
e_{i}\right) z_{i}^{\ast }\right\Vert _{F^{\ast }}:\phi \in B_{\left( X%
\widehat{\boxtimes }_{\varepsilon }E\right) ^{\ast }}\right\} \\
&\leq &\mathfrak{I}\left( \widehat{T}\right) \sup_{f\in
B_{X^{\#}},\left\Vert e^{\ast }\right\Vert =1}\left\Vert
\sum_{i=1}^{n}\left( f(x_{i})-f(y_{i})\right) e^{\ast }\left( e_{i}\right)
z_{i}^{\ast }\right\Vert _{F^{\ast }}.
\end{eqnarray*}%
Thus, $T$ is integral and $\mathfrak{I}_{int}^{LipL}(T)\leq \mathfrak{I}%
\left( \widehat{T}\right) .$
\end{proof}

\begin{corollary}
Let $X$ be a pointed metric space, and let $E,F$ be Banach spaces. We have
the following isometric identification%
\begin{equation}
LipL_{0}^{int}(X\times E;F)=\mathcal{I}\left( X\widehat{\boxtimes }%
_{\varepsilon }E;F\right) .  \label{32}
\end{equation}
\end{corollary}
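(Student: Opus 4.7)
The plan is to derive the corollary as an immediate packaging of the equivalence proved in Theorem \ref{TheoremFond}, together with the linearization isometry of Theorem \ref{IdentLinearisation}. The correspondence $T\longleftrightarrow \widehat{T}$ supplied by Theorem \ref{IdentLinearisation} is already a linear bijection between $LipL_{0}(X\times E;F)$ and $\mathcal{L}(X\widehat{\boxtimes}_{\pi}E;F)$, so what remains is to check that this same correspondence restricts to a norm preserving bijection between the subspaces $LipL_{0}^{int}(X\times E;F)$ and $\mathcal{I}(X\widehat{\boxtimes}_{\varepsilon}E;F)$.

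First, I would verify the forward inclusion. Given $T\in LipL_{0}^{int}(X\times E;F)$, Theorem \ref{TheoremFond} (the implication $1)\Longrightarrow 2)$) tells me that $\widehat{T}$ is integral as an operator on $X\widehat{\boxtimes}_{\varepsilon}E$, together with the estimate $\mathfrak{I}(\widehat{T})\leq \mathfrak{I}_{int}^{LipL}(T)$. Since integrality on the dense subspace $X\boxtimes E$ automatically forces continuity with respect to $\varepsilon(\cdot)$, the operator $\widehat{T}$ extends uniquely to $X\widehat{\boxtimes}_{\varepsilon}E$ while retaining its integral norm, placing it in $\mathcal{I}(X\widehat{\boxtimes}_{\varepsilon}E;F)$.

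Next, for surjectivity, I would start from an arbitrary $A\in \mathcal{I}(X\widehat{\boxtimes}_{\varepsilon}E;F)$. Because $\varepsilon(\cdot)\leq \pi(\cdot)$ on $X\boxtimes E$, the restriction $A|_{X\boxtimes E}$ is continuous with respect to the projective norm, hence extends uniquely to a bounded linear operator on $X\widehat{\boxtimes}_{\pi}E$. Theorem \ref{IdentLinearisation} then produces a unique $T\in LipL_{0}(X\times E;F)$ with $\widehat{T}=A$ on $X\boxtimes E$, defined by $T(x,e)=A(\delta_{(x,0)}\boxtimes e)$. Invoking the implication $2)\Longrightarrow 1)$ of Theorem \ref{TheoremFond} yields $T\in LipL_{0}^{int}(X\times E;F)$ with $\mathfrak{I}_{int}^{LipL}(T)\leq \mathfrak{I}(\widehat{T})=\mathfrak{I}(A)$.

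Combining the two inequalities already contained in the proof of Theorem \ref{TheoremFond} gives $\mathfrak{I}_{int}^{LipL}(T)=\mathfrak{I}(\widehat{T})$, so the bijection is an isometry; linearity of $T\mapsto \widehat{T}$ was established in Theorem \ref{IdentLinearisation}. The only subtle point, and what I would flag as the main obstacle, is the interplay between the two tensor topologies: the linearization $\widehat{T}$ is manufactured as a $\pi$-continuous operator, yet one must consistently read it as living on $X\widehat{\boxtimes}_{\varepsilon}E$ in order to invoke the classical notion of integrality. This is resolved by the fact that an integral operator on a normed space is automatically continuous for the injective norm, and by uniqueness of extension from the common dense subspace $X\boxtimes E$.
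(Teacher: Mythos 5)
Your proposal is correct and follows essentially the same route as the paper: both use the correspondence $T\leftrightarrow\widehat{T}$, obtain well-definedness and the isometry from the two inequalities of Theorem \ref{TheoremFond}, and prove surjectivity by pulling an integral $\varphi$ back to $T(x,e)=\varphi(\delta_{(x,0)}\boxtimes e)$ (the paper re-verifies the integrality inequality for this $T$ directly, which is just the $2)\Longrightarrow 1)$ direction you cite). Your explicit remarks on the $\pi$-versus-$\varepsilon$ topology issue make precise a point the paper leaves implicit, but the substance is identical.
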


\begin{proof}
Define the map%
\begin{equation*}
\Gamma :LipL_{0}^{int}(X\times E;F)\rightarrow \mathcal{I}\left( X\widehat{%
\boxtimes }_{\varepsilon }E;F\right)
\end{equation*}%
by $\Gamma \left( T\right) =\widehat{T}.$ By Theorem $\ref{TheoremFond}$, $%
\Gamma $ is well-defined and isometric. We will now show that $\Gamma $ is
surjective. Let $\varphi \in \mathcal{I}\left( X\widehat{\boxtimes }%
_{\varepsilon }E;F\right) .$ Define%
\begin{equation*}
T_{\varphi }\left( x,e\right) =\varphi \left( \delta _{x}\boxtimes e\right) .
\end{equation*}%
Then, $T_{\varphi }$ is Lip-Linear operator. For any $\left( x_{i}\right)
_{i=1}^{n},\left( y_{i}\right) _{i=1}^{n}\subset X,$ $\left( e_{i}\right)
_{i=1}^{n}\subset E$ and $\left( z_{i}^{\ast }\right) _{i=1}^{n}\subset
F^{\ast },$ then%
\begin{eqnarray*}
\left\vert \sum_{i=1}^{n}\left\langle T_{\varphi }\left( x_{i},e_{i}\right)
-T_{\varphi }\left( y_{i},e_{i}\right) ,z_{i}^{\ast }\right\rangle
\right\vert &=&\left\vert \sum_{i=1}^{n}\left\langle \varphi \left( \delta
_{x_{i}}\boxtimes e_{i}\right) -\varphi \left( \delta _{y_{i}}\boxtimes
e_{i}\right) ,z_{i}^{\ast }\right\rangle \right\vert \\
&=&\left\vert \sum_{i=1}^{n}\left\langle \varphi \left( \delta _{\left(
x_{i},y_{i}\right) }\boxtimes e_{i}\right) ,z_{i}^{\ast }\right\rangle
\right\vert
\end{eqnarray*}%
Since $\varphi $ is integral,%
\begin{equation*}
\left\vert \sum_{i=1}^{n}\left\langle T_{\varphi }\left( x_{i},e_{i}\right)
-T_{\varphi }\left( y_{i},e_{i}\right) ,z_{i}^{\ast }\right\rangle
\right\vert \leq \mathfrak{I}\left( \varphi \right) \sup_{f\in
B_{X^{\#}},\left\Vert e^{\ast }\right\Vert =1}\left\Vert
\sum_{i=1}^{n}\left( f(x_{i})-f(y_{i})\right) e^{\ast }\left( e_{i}\right)
z_{i}^{\ast }\right\Vert _{F^{\ast }}
\end{equation*}%
Thus, $T_{\varphi }$ is integral. Finally, for $u=\sum_{i=1}^{n}\delta
_{\left( x_{i},y_{i}\right) }\boxtimes e_{i}$, we have%
\begin{equation*}
\widehat{T_{\varphi }}\left( u\right) =\widehat{T_{\varphi }}\left(
\sum_{i=1}^{n}\delta _{\left( x_{i},y_{i}\right) }\boxtimes e_{i}\right)
=\sum_{i=1}^{n}T_{\varphi }\left( x_{i},e_{i}\right) -T_{\varphi }\left(
y_{i},e_{i}\right)
\end{equation*}%
Substituting $T_{\varphi }\left( x_{i},e_{i}\right) =\varphi \left( \delta
_{x_{i}}\boxtimes e_{i}\right) ,$ we get%
\begin{equation*}
\widehat{T_{\varphi }}\left( u\right) =\sum_{i=1}^{n}\varphi \left( \delta
_{x_{i}}\boxtimes e_{i}\right) -\varphi \left( \delta _{y_{i}}\boxtimes
e_{i}\right) =\sum_{i=1}^{n}\varphi \left( \delta _{\left(
x_{i},y_{i}\right) }\boxtimes e_{i}\right) =\varphi \left( u\right) .
\end{equation*}%
Then, $\widehat{T_{\varphi }}=\varphi $ on $X\boxtimes _{\varepsilon }E.$ By
density, this equality extends to $X\widehat{\boxtimes }_{\varepsilon }E.$
Therefore, we conclude that $\Gamma $ is surjective.
\end{proof}

The following result establishes the relationship between a Lip-linear
operator and its corresponding bilinear operator in the context of
integrability.

\begin{proposition}
\label{ProTandTbil}Let $X$ be a pointed metric space and $E$ a Banach space.
For a Lip-Linear operator $T:X\times E\rightarrow F$, the following
assertions are equivalent

1) The Lip-Linear operator $T$ belongs to $LipL_{0}^{int}(X\times E;F)$.

2) The bilinear operator $T_{bil}$ belongs to $\mathcal{BI}(\mathcal{F}%
\left( X\right) \times E;F)$.

Moreover, we have the equality%
\begin{equation*}
\mathfrak{I}_{int}^{LipL}(T)=\mathfrak{I}_{int}^{2}(T_{bil}).
\end{equation*}

This leads to the following isometric identification%
\begin{equation}
LipL_{0}^{int}(X\times E;F)=\mathcal{BI}(\mathcal{F}\left( X\right) \times
E;F)  \label{33}
\end{equation}
\end{proposition}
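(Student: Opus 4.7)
The plan is to chain together Theorem \ref{TheoremFond}, the classical linearization $(\ref{1,10})$, and the commutative diagram $(\ref{2.5})$ through the isometric isomorphism $I$ from $(\ref{1,5})$. The equivalence $1)\Leftrightarrow 2)$ together with the equality of integral norms then becomes a transitivity argument, after which the isometric identification $(\ref{33})$ is obtained by restricting the algebraic isomorphism $T\leftrightarrow T_{bil}$ already recorded in $(\ref{2.4})$ and $(\ref{1,8})$.

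First I would invoke Theorem \ref{TheoremFond} together with its corollary: $T\in LipL_{0}^{int}(X\times E;F)$ if and only if its linearization $\widehat{T}$ belongs to $\mathcal{I}(X\widehat{\boxtimes}_{\varepsilon}E;F)$, with $\mathfrak{I}_{int}^{LipL}(T)=\mathfrak{I}(\widehat{T})$. Dually, by $(\ref{1,10})$, $T_{bil}\in\mathcal{BI}(\mathcal{F}(X)\times E;F)$ if and only if $\overline{T_{bil}}\in\mathcal{I}(\mathcal{F}(X)\widehat{\otimes}_{\varepsilon}E;F)$, with $\mathfrak{I}_{int}^{2}(T_{bil})=\mathfrak{I}(\overline{T_{bil}})$. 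Diagram $(\ref{2.5})$ then provides $\overline{T_{bil}}=\widehat{T}\circ I^{-1}$. Since by $(\ref{1,5})$ the map $I$ is an isometric isomorphism from $X\widehat{\boxtimes}_{\varepsilon}E$ onto $\mathcal{F}(X)\widehat{\otimes}_{\varepsilon}E$, the ideal property of the Banach operator ideal $(\mathcal{I},\mathfrak{I})$ yields the two inequalities $\mathfrak{I}(\widehat{T})\le\mathfrak{I}(\overline{T_{bil}})$ and $\mathfrak{I}(\overline{T_{bil}})\le\mathfrak{I}(\widehat{T})$, hence $\mathfrak{I}(\widehat{T})=\mathfrak{I}(\overline{T_{bil}})$. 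Chaining the three equalities produces at once $1)\Leftrightarrow 2)$ and $\mathfrak{I}_{int}^{LipL}(T)=\mathfrak{I}_{int}^{2}(T_{bil})$. For $(\ref{33})$, I would simply restrict the existing isometric isomorphism $T\leftrightarrow T_{bil}$ between $LipL_{0}(X\times E;F)$ and $\mathcal{B}(\mathcal{F}(X)\times E;F)$ to the integral subspaces; the equivalence and norm equality just established turn this restriction into a surjective linear isometry.

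The main point requiring care is the distinction of tensor topologies: diagram $(\ref{2.5})$ was stated in the $\pi$-setting, while integrability forces a passage to the $\varepsilon$-completion. I would handle this by noting that the identity $\overline{T_{bil}}\circ\sigma_{2}(m,e)=\widehat{T}\circ I^{-1}(m\otimes e)$ is purely algebraic on the uncompleted tensor product $\mathcal{F}(X)\otimes E$; once integrability delivers continuous extensions of both sides to $\mathcal{F}(X)\widehat{\otimes}_{\varepsilon}E$, these extensions necessarily coincide there by density, and the transitivity argument above goes through unchanged.
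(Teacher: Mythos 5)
Your proposal is correct and follows essentially the same route as the paper: both arguments rest on the identity $\overline{T_{bil}}=\widehat{T}\circ I^{-1}$ from diagram $(\ref{2.5})$, the fact that $I$ is an isometric isomorphism in the $\varepsilon$-setting by $(\ref{1,5})$, Theorem \ref{TheoremFond}, and the classical identification $(\ref{1,10})$. Your explicit treatment of the $\pi$-versus-$\varepsilon$ issue and of the ideal property of $(\mathcal{I},\mathfrak{I})$ is a welcome refinement of a step the paper passes over silently, but it is not a different proof.
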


\begin{proof}
From the following commutative diagram%
\begin{equation*}
\begin{array}{ccccc}
X\times E & \overset{T}{\longrightarrow } & F &  &  \\ 
\delta _{X}\times id_{E}\downarrow & T_{bil}\nearrow & \overline{T_{bil}}%
\uparrow & \nwarrow \widehat{T} &  \\ 
\text{$\mathcal{F}$}\left( X\right) \times E & \overset{\sigma _{2}}{%
\longrightarrow } & \text{$\mathcal{F}$}\left( X\right) \widehat{\otimes }%
_{\varepsilon }E & \overset{I^{-1}}{\longrightarrow } & X\widehat{\boxtimes }%
_{\varepsilon }E%
\end{array}%
\end{equation*}%
we obtain that $\overline{T_{bil}}=\widehat{T}\circ I^{-1}$ is inegral if
and only if $\widehat{T}$ is integral, from which the result follows
immediately. For the identification $(\ref{33})$, we have by $\left( \ref%
{1,10}\right) $ 
\begin{equation*}
\mathcal{BI}(\mathcal{F}\left( X\right) \times E;F)=\mathcal{I}(\mathcal{F}%
\left( X\right) \widehat{\otimes }_{\varepsilon }E;F)
\end{equation*}%
Using the identifications $\left( \ref{1,5}\right) $ and $(\ref{32})$, we
deduce that%
\begin{equation*}
LipL_{0}^{int}(X\times E;F)=\mathcal{BI}(\mathcal{F}\left( X\right) \times
E;F).
\end{equation*}
\end{proof}

By setting $F=\mathbb{K}$ in $(\ref{33})$, the identifications $(\ref{1,5})$
and $(\ref{1,11})$ yield%
\begin{equation}
\left( X\widehat{\boxtimes }_{\varepsilon }E\right) ^{\ast
}=LipL_{0}^{int}(X\times E).  \label{34}
\end{equation}

Let $R:X\rightarrow E$ be a Lipschitz operator, and let $T_{R}$ be its
associated Lip-Linear form, as defined in $\left( \ref{1,1}\right) $. It is
straightforward to verify that for $m\in \mathcal{F}$$\left( X\right) $ and $%
e^{\ast }\in E^{\ast }$, then%
\begin{equation*}
\widehat{T_{R}}\left( m\boxtimes e^{\ast }\right) =e^{\ast }\left(
R_{L}\left( m\right) \right) ,
\end{equation*}%
where $\widehat{T_{R}}$ and $R_{L}$ are the linearizations of $T_{R}$ and $%
R, $ respectively. The definition of Lipschitz integral operators is given
in \cite{cab}$.$ According to \cite[Proposition 2.4]{cab}, the Lipschitz
operator $R$ is Lipschitz integral if and only if its linearization $R_{L}$
is also integral. Furthermore, by \cite[Definition 2.2.]{v}, $R_{L}$ is
integral, if and only if the bilinear operator%
\begin{equation*}
\begin{array}{cccc}
T_{R_{L}} & \text{$\mathcal{F}$}\left( X\right) \times E^{\ast } & 
\longrightarrow & \mathbb{K} \\ 
& \left( m,e^{\ast }\right) & \longmapsto & e^{\ast }\left( R_{L}\left(
m\right) \right)%
\end{array}%
\end{equation*}%
is integral. Based on this, we can state the following result.

\begin{proposition}
For a Lipschitz operator $u:X\longrightarrow E$, the following statements
are equivalent.

1) $R\in Lip_{0GI}\left( X;E\right) $.

2) The Lip-Linear form $T_{R}:X\times E^{\ast }\rightarrow \mathbb{K}$ is
integral.

In this case,%
\begin{equation*}
Lip_{GI}\left( R\right) =\mathfrak{I}_{int}^{LipL}(T_{R}).
\end{equation*}
\end{proposition}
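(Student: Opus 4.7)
The plan is to chain together three identifications already established: (a) $R$ is Lipschitz integral iff its linearization $R_L$ is integral (cited from \cite[Proposition 2.4]{cab}); (b) $R_L$ is linearly integral iff its associated bilinear form $T_{R_L}:\mathcal{F}(X)\times E^{\ast}\to\mathbb{K}$, $(m,e^{\ast})\mapsto e^{\ast}(R_L(m))$, is integral (cited from \cite[Definition 2.2]{v}); and (c) by Proposition \ref{ProTandTbil}, a Lip-Linear operator is integral if and only if its associated bilinear operator $T_{bil}$ on $\mathcal{F}(X)\times E^{\ast}$ is integral, with equal norms.

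First I would identify $T_{R_L}$ with $(T_R)_{bil}$. By the commutative diagram \eqref{2.5} characterizing $T_{bil}$, we have $(T_R)_{bil}(\delta_x,e^{\ast})=T_R(x,e^{\ast})=e^{\ast}(R(x))$ for every $x\in X$ and $e^{\ast}\in E^{\ast}$. Since $R_L\circ\delta_X=R$, this equals $e^{\ast}(R_L(\delta_x))=T_{R_L}(\delta_x,e^{\ast})$. By density of $\mathrm{span}\{\delta_x:x\in X\}$ in $\mathcal{F}(X)$ and bilinearity and continuity (both operators are bounded bilinear forms on $\mathcal{F}(X)\times E^{\ast}$), it follows that $(T_R)_{bil}=T_{R_L}$ as bilinear maps.

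Combining the three equivalences, we obtain the chain
\begin{equation*}
R\in Lip_{0GI}(X;E)\ \Longleftrightarrow\ R_L\in\mathcal{I}(\mathcal{F}(X);E)\ \Longleftrightarrow\ T_{R_L}\in\mathcal{BI}(\mathcal{F}(X)\times E^{\ast})\ \Longleftrightarrow\ T_R\in LipL_0^{int}(X\times E^{\ast}),
\end{equation*}
which is exactly the desired equivalence $(1)\Leftrightarrow(2)$.

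For the norm equality, I would track the isometric constants along the same chain: $Lip_{GI}(R)=\mathfrak{I}(R_L)$ by \cite[Proposition 2.4]{cab}, $\mathfrak{I}(R_L)=\mathfrak{I}_{int}^{2}(T_{R_L})$ by \cite[Definition 2.2]{v}, and $\mathfrak{I}_{int}^{2}((T_R)_{bil})=\mathfrak{I}_{int}^{LipL}(T_R)$ by the norm equality in Proposition \ref{ProTandTbil}. Since $(T_R)_{bil}=T_{R_L}$, combining these gives $Lip_{GI}(R)=\mathfrak{I}_{int}^{LipL}(T_R)$. No step requires genuine work beyond this bookkeeping; the main (and only) non-trivial checkpoint is the identification $(T_R)_{bil}=T_{R_L}$, which is immediate once one writes out the defining diagram and uses $R=R_L\circ\delta_X$.
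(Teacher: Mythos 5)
Your proposal is correct and follows essentially the same route as the paper: both reduce the statement to the identification of the bilinear form $B_{R_L}(m,e^{\ast})=e^{\ast}(R_L(m))$ with $(T_R)_{bil}$, and then chain the cited equivalences from \cite{cab}, \cite{v}, and Proposition \ref{ProTandTbil}. The only cosmetic difference is that the paper verifies $(T_R)_{bil}=B_{R_L}$ directly from the formula $\widehat{T_R}(m\boxtimes e^{\ast})=e^{\ast}(R_L(m))$ for general $m$, whereas you check it on the $\delta_x$ and extend by density; both are fine.
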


\begin{proof}
Let $R_{L}:\mathcal{F}\left( X\right) \rightarrow E$ be the linearization of 
$R.$ Its corresponding bilinear form is defined as%
\begin{equation*}
\begin{array}{cccc}
B_{R_{L}}: & \text{$\mathcal{F}$}\left( X\right) \times E^{\ast } & 
\rightarrow & \mathbb{K} \\ 
& \left( m,e^{\ast }\right) & \mapsto & e^{\ast }\left( R_{L}\left( m\right)
\right)%
\end{array}%
\end{equation*}%
On the other hand, by $\left( \ref{2.5}\right) $, the bilinear form
associated with $T_{R}$ is given by%
\begin{equation*}
\left( T_{R}\right) _{bil}:\text{$\mathcal{F}$}\left( X\right) \times
E^{\ast }\rightarrow \mathbb{K}
\end{equation*}%
where 
\begin{eqnarray*}
\left( T_{R}\right) _{bil}\left( m,e^{\ast }\right) &=&\widehat{T_{R}}\circ
I^{-1}\circ \sigma \left( m,e^{\ast }\right) \\
&=&\widehat{T_{R}}\left( m\boxtimes e^{\ast }\right) \\
&=&e^{\ast }\left( R_{L}\left( m\right) \right)
\end{eqnarray*}%
Therefore, we conclude that $B_{R_{L}}=\left( T_{R}\right) _{bil},$ which
establishes the equivalence.
\end{proof}

Now, we give a factorization result of integral Lip-Linear operators like
that given by Villanueva for integral $m$-linear operators \cite{v}.

\begin{theorem}
Given a Lip-Linear operator $T:X\times E\rightarrow F$. Then, $T$ is
integral if and only if there exist a regular Borel measure $\mu $ on $%
B_{X^{\#}}\times B_{E^{\ast }}$ and a linear operator $b:L_{1}\left( \mu
\right) \longrightarrow F^{\ast \ast }$ such that the following diagram
commutes:%
\begin{equation}
\begin{array}{ccccc}
X\times E & \overset{T}{\longrightarrow } & F & \longrightarrow & F^{\ast
\ast } \\ 
\sigma ^{LipL}\downarrow &  &  &  & \uparrow b \\ 
X\widehat{\boxtimes }_{\varepsilon }E & \overset{\gamma }{\longrightarrow }
& \mathcal{C}\left( B_{X^{\#}}\right) \widehat{\otimes }_{\varepsilon }%
\mathcal{C}\left( B_{E^{\ast }}\right) & \overset{J}{\longrightarrow } & 
L_{1}\left( \mu \right)%
\end{array}
\label{35}
\end{equation}%
where $J$ is the natural inclusion and $\gamma $ is embedding map defined by%
\begin{equation*}
\gamma \left( \delta _{x}\boxtimes e\right) =\delta _{x}\otimes e.
\end{equation*}
\end{theorem}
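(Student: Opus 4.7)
The plan is to reduce the statement to the classical Grothendieck factorization theorem for integral linear operators by passing through the linearization $\widehat{T}: X \widehat{\boxtimes}_{\varepsilon} E \rightarrow F$ and invoking Corollary 3.2. A key preparatory observation is that $\gamma$ is an isometric embedding into $C(B_{X^{\#}} \times B_{E^{\ast}})$: the evaluation maps $\mathcal{F}(X) \hookrightarrow C(B_{X^{\#}})$ (sending $m$ to its restriction to the weak-$\ast$ compact ball) and $E \hookrightarrow C(B_{E^{\ast}})$ are both isometric embeddings by duality; since the injective tensor norm preserves such embeddings, their tensor product realizes $\mathcal{F}(X) \widehat{\otimes}_{\varepsilon} E \hookrightarrow C(B_{X^{\#}}) \widehat{\otimes}_{\varepsilon} C(B_{E^{\ast}}) = C(B_{X^{\#}} \times B_{E^{\ast}})$ isometrically, which under the identification (1.5) is precisely $\gamma$, acting on elementary tensors as $\gamma(\delta_{x} \boxtimes e)(f, e^{\ast}) = f(x)\, e^{\ast}(e)$.

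For the forward implication, assume $T$ is integral. By Corollary 3.2, $\widehat{T}$ is integral in the linear sense. The classical Grothendieck factorization theorem for integral operators (Ryan, Diestel--Jarchow--Tonge), in the version that accommodates a prescribed isometric embedding $Y \hookrightarrow C(K)$, yields a regular Borel measure $\mu$ on $K = B_{X^{\#}} \times B_{E^{\ast}}$ together with a bounded operator $b : L_{1}(\mu) \rightarrow F^{\ast\ast}$ such that $j_{F} \circ \widehat{T} = b \circ J \circ \gamma$, where $j_{F}: F \hookrightarrow F^{\ast\ast}$ is the canonical embedding and $J: C(K) \rightarrow L_{1}(\mu)$ is the natural inclusion. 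Composing with $\sigma^{LipL}$ and invoking $T = \widehat{T} \circ \sigma^{LipL}$ from Theorem \ref{IdentLinearisation} produces the required commutative diagram.

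Conversely, suppose the diagram commutes. Using $T = \widehat{T} \circ \sigma^{LipL}$, linearization forces the identity $j_{F} \circ \widehat{T} = b \circ J \circ \gamma$ to hold on the dense subspace $X \boxtimes E$, hence on all of $X \widehat{\boxtimes}_{\varepsilon} E$ by continuity. Since $J: C(K) \rightarrow L_{1}(\mu)$ is integral (a classical Grothendieck fact, with $\mathfrak{I}(J) = \|\mu\|$) and the integral operators form a Banach operator ideal, the composition $b \circ J \circ \gamma$ is integral; since $j_{F}$ is an isometric embedding, $\widehat{T}$ itself must be integral, hence $T \in LipL_{0}^{int}(X \times E; F)$ by Corollary 3.2.

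The principal difficulty is invoking the classical factorization theorem in the specific form that permits $K = B_{X^{\#}} \times B_{E^{\ast}}$ with embedding $\gamma$, rather than the canonical choice $K = B_{Y^{\ast}}$ for $Y = X \widehat{\boxtimes}_{\varepsilon} E$. This is handled by the standard principle that any isometric embedding $i: Y \hookrightarrow C(K)$ may replace the canonical one: one extends $\widehat{T}$ (viewed on $\gamma(Y)$) via Hahn--Banach to an integral operator on $C(K)$, applies the Riesz representation theorem to obtain the measure $\mu$ on $K$, and then defines $b$ on the image in $L_{1}(\mu)$ before extending by norm-density to the full $L_{1}(\mu)$; the integral norm of $\widehat{T}$ equals the product $\|b\|\,\|\mu\|$ realized by this factorization.
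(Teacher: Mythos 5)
Your proof is correct, but it is organized around a different intermediate object than the paper's. The paper passes from $T$ to its associated bilinear operator $T_{bil}$ on $\mathcal{F}(X)\times E$ (via Proposition \ref{ProTandTbil}) and then invokes Villanueva's factorization theorem for integral \emph{bilinear} operators as a black box, reading off the diagram by precomposing with $\delta_X\times id_E$. You instead work directly with the linearization $\widehat{T}$ on $X\widehat{\boxtimes}_{\varepsilon}E$ (via the identification $LipL_{0}^{int}(X\times E;F)=\mathcal{I}(X\widehat{\boxtimes}_{\varepsilon}E;F)$) and run the classical \emph{linear} Grothendieck--Ryan machinery yourself: isometric embedding $\gamma$ into $\mathcal{C}(B_{X^{\#}}\times B_{E^{\ast}})$ by injectivity of the $\varepsilon$-norm, extension of the integral operator to the ambient $\mathcal{C}(K)$, Riesz representation to get $\mu$, and factorization through $L_{1}(\mu)$. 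The two routes rest on the same underlying mechanism (Villanueva's theorem is proved exactly this way), but yours has the advantage of making the converse cleaner and fully rigorous: you correctly observe that $J$ is integral and that the ideal property plus the fact that $j_{F}\circ\widehat{T}$ integral implies $\widehat{T}$ integral closes the loop, whereas the paper's converse is sketchier. One small point you gloss over: in the converse, the boundedness of $\widehat{T}$ with respect to the injective norm is not given a priori (only $\pi$-boundedness is), but it follows from the identity $j_{F}\circ\widehat{T}=b\circ J\circ\gamma$ on the dense subspace $X\boxtimes E$, since the right-hand side is $\varepsilon$-continuous; it would be worth saying this explicitly rather than appealing to "continuity" of $\widehat{T}$.
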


\begin{proof}
Suppose $T:X\times E\rightarrow F$ is intgral. By Proposition $\ref%
{ProTandTbil}$, its corresponding bilinear $T_{bil}$ is also integral. By a
factorization result, essentially due to Villanueva \cite{v}, there exist a
regular Borel measure $\mu $ on $B_{X^{\#}}\times B_{E^{\ast }}$ a linear
operator $b:L_{1}\left( \mu \right) \longrightarrow F^{\ast \ast }$ such
that $k_{F}\circ T$ factorizes as%
\begin{equation*}
\begin{array}{ccccccc}
\mathcal{F}\left( X\right) \times E & \overset{i_{\mathcal{F}\left( X\right)
}\otimes i_{E}}{\longrightarrow } & \mathcal{C}\left( B_{X^{\#}}\right) 
\widehat{\otimes }_{\varepsilon }\mathcal{C}\left( B_{E^{\ast }}\right) & 
\overset{J}{\longrightarrow } & L_{1}\left( \mu \right) & \overset{b}{%
\longrightarrow } & F^{\ast \ast }%
\end{array}%
\end{equation*}%
In other words, $k_{F}\circ T_{bil}=b\circ J\circ \left( i_{\mathcal{F}%
\left( X\right) }\times i_{E}\right) .$ From \cite[Page 50 and 57]{ryn} and $%
\left( \ref{1,5}\right) $, we have 
\begin{equation*}
X\widehat{\boxtimes }_{\varepsilon }E=\text{$\mathcal{F}$}\left( E\right) 
\widehat{\otimes }_{\varepsilon }E\subset \mathcal{C}\left( B_{X^{\#}}\times
B_{E^{\ast }}\right) =\mathcal{C}\left( B_{X^{\#}}\right) \widehat{\otimes }%
_{\varepsilon }\mathcal{C}\left( B_{E^{\ast }}\right) .
\end{equation*}%
We define the embedding map%
\begin{equation*}
\begin{array}{ccc}
X\widehat{\boxtimes }_{\varepsilon }E & \overset{\gamma }{\longrightarrow }
& \mathcal{C}\left( B_{X^{\#}}\right) \widehat{\otimes }_{\varepsilon }%
\mathcal{C}\left( B_{E^{\ast }}\right)%
\end{array}%
\end{equation*}%
where $\gamma \circ \sigma ^{LipL}\left( x,e\right) =\delta _{x}\otimes e.$
Using this, $T$ factorizes as%
\begin{eqnarray*}
k_{F}\circ T &=&k_{F}\circ T_{bil}\circ \left( \delta _{X}\times
id_{E}\right) \\
&=&b\circ J\circ \left( i_{\mathcal{F}\left( X\right) }\times i_{E}\right)
\circ \left( \delta _{X}\times id_{E}\right) \\
&=&b\circ J\circ \gamma \circ \sigma ^{LipL}
\end{eqnarray*}%
Conversely, suppose there exists a regular Borel measure $\mu $ on $%
B_{X^{\#}}\times B_{E^{\ast }}$ and a linear operator $b:L_{1}\left( \mu
\right) \longrightarrow F^{\ast \ast }$ such that the following diagram $%
\left( \ref{35}\right) $ commutes. Then, $T_{bil}$ factorizes as%
\begin{eqnarray*}
k_{E}\circ T_{bil} &=&k_{E}\circ \widehat{T}\circ I^{-1}\circ \sigma \\
&=&\widehat{k_{E}\circ T}\circ I^{-1}\circ \sigma \\
&=&\widehat{b\circ J\circ \gamma \circ \sigma ^{LipL}}\circ I^{-1}\circ
\sigma \\
&=&b\circ J\circ \widehat{\gamma \circ \sigma ^{LipL}}\circ I^{-1}\circ
\sigma
\end{eqnarray*}%
Finally, since 
\begin{equation*}
\widehat{\gamma \circ \sigma ^{LipL}}\circ I^{-1}\circ \sigma =i_{\mathcal{F}%
\left( X\right) }\otimes i_{E}
\end{equation*}%
we obtain the desired result.
\end{proof}

\begin{proposition}
Let $T$ be a Lip-Linear form on $X\times E$. Then $T$ is integral if and
only if there exists a regular Borel measure $\mu $ on $B_{X^{\#}}\times
B_{E^{\ast }}$\textperiodcentered\ such that%
\begin{equation*}
T\left( x,e\right) =\dint\limits_{B_{X^{\#}}\times B_{E^{\ast }}}f\left(
x\right) e^{\ast }\left( e\right) d\mu \left( f,e^{\ast }\right)
\end{equation*}%
for every $x\in X$ and $e\in E.$ Furthermore, the integral norm of $T$
satisfies%
\begin{equation*}
\mathfrak{I}_{int}^{LipL}=\inf \left\{ \mu \right\} ,
\end{equation*}%
where $\mu $ is regular Borel measure on $B_{X^{\#}}\times B_{E^{\ast }}$
that satisfies the above condition.
\end{proposition}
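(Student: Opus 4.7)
The plan is to derive the integral representation from the duality identification $(X\widehat{\boxtimes}_{\varepsilon}E)^{\ast}=LipL_{0}^{int}(X\times E)$ stated in $(\ref{34})$, combined with the isometric embedding of $X\widehat{\boxtimes}_{\varepsilon}E$ into $\mathcal{C}(B_{X^{\#}}\times B_{E^{\ast}})$ and the Riesz representation theorem on this compact Hausdorff space.

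For the direct implication, I would start by passing to the linearization $\widehat{T}\in(X\widehat{\boxtimes}_{\varepsilon}E)^{\ast}$, which exists by Theorem \ref{TheoremFond} and satisfies $\|\widehat{T}\|=\mathfrak{I}_{int}^{LipL}(T)$ via $(\ref{34})$. Using $(\ref{1,5})$ together with the embedding $\gamma$ introduced in the preceding factorization theorem, I obtain an isometric embedding $\gamma:X\widehat{\boxtimes}_{\varepsilon}E\hookrightarrow\mathcal{C}(B_{X^{\#}})\widehat{\otimes}_{\varepsilon}\mathcal{C}(B_{E^{\ast}})=\mathcal{C}(B_{X^{\#}}\times B_{E^{\ast}})$ under which $\delta_{x}\boxtimes e$ is identified with the continuous function $(f,e^{\ast})\mapsto f(x)e^{\ast}(e)$. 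By Hahn--Banach, the functional $\widehat{T}$ extends to $\Phi\in\mathcal{C}(B_{X^{\#}}\times B_{E^{\ast}})^{\ast}$ with $\|\Phi\|=\|\widehat{T}\|$. The Riesz representation theorem then furnishes a regular Borel measure $\mu$ on $B_{X^{\#}}\times B_{E^{\ast}}$ with $\|\mu\|=\|\Phi\|$ satisfying $\Phi(h)=\int h\,d\mu$ for every $h\in\mathcal{C}(B_{X^{\#}}\times B_{E^{\ast}})$. Applying this to $h=\gamma(\delta_{x}\boxtimes e)$ gives
\begin{equation*}
T(x,e)=\widehat{T}(\delta_{x}\boxtimes e)=\Phi(\gamma(\delta_{x}\boxtimes e))=\int_{B_{X^{\#}}\times B_{E^{\ast}}}f(x)e^{\ast}(e)\,d\mu(f,e^{\ast}),
\end{equation*}
which is the desired representation, together with $\|\mu\|=\mathfrak{I}_{int}^{LipL}(T)$.

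For the converse direction and the norm equality, I would verify directly that any $T$ admitting such a representation is integral with $\mathfrak{I}_{int}^{LipL}(T)\leq\|\mu\|$. Given finite families $(x_{i})_{i=1}^{n},(y_{i})_{i=1}^{n}\subset X$ and $(e_{i})_{i=1}^{n}\subset E$, linearity of the integral yields
\begin{equation*}
\left|\sum_{i=1}^{n}\bigl(T(x_{i},e_{i})-T(y_{i},e_{i})\bigr)\right|=\left|\int\sum_{i=1}^{n}\bigl(f(x_{i})-f(y_{i})\bigr)e^{\ast}(e_{i})\,d\mu\right|\leq\|\mu\|\cdot\sup_{f,e^{\ast}}\left|\sum_{i=1}^{n}\bigl(f(x_{i})-f(y_{i})\bigr)e^{\ast}(e_{i})\right|,
\end{equation*}
which is precisely the scalar version of $(\ref{31})$. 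Taking the infimum over all representing measures gives $\mathfrak{I}_{int}^{LipL}(T)\leq\inf\{\|\mu\|\}$, and the reverse inequality is witnessed by the particular $\mu$ constructed via Hahn--Banach and Riesz, which attains $\|\mu\|=\mathfrak{I}_{int}^{LipL}(T)$.

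The main delicate point will be the careful identification of $\gamma(\delta_{x}\boxtimes e)\in\mathcal{C}(B_{X^{\#}}\times B_{E^{\ast}})$ with the continuous function $(f,e^{\ast})\mapsto f(x)e^{\ast}(e)$. This requires unwinding the canonical identification $\mathcal{C}(K)\widehat{\otimes}_{\varepsilon}\mathcal{C}(L)=\mathcal{C}(K\times L)$ and combining it with the evaluation embeddings $\mathcal{F}(X)\hookrightarrow\mathcal{C}(B_{X^{\#}})$ (through $\delta_{x}\mapsto\mathrm{ev}_{x}$) and $E\hookrightarrow\mathcal{C}(B_{E^{\ast}})$. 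Once this identification is settled, the remainder is a routine application of Hahn--Banach, the Riesz representation theorem, and straightforward estimates.
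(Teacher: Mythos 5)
Your proposal is correct, and it follows the same overall strategy as the paper (pass to the linearization $\widehat{T}$ on $X\widehat{\boxtimes}_{\varepsilon}E$ and represent it by a measure on $B_{X^{\#}}\times B_{E^{\ast}}$), but the execution differs in a worthwhile way. The paper transfers the problem to the bilinear form $B_{\widehat{T}\circ I^{-1}}$ on $\mathcal{F}(X)\times E$ via the identification $X\widehat{\boxtimes}_{\varepsilon}E=\mathcal{F}(X)\widehat{\otimes}_{\varepsilon}E$ and then invokes the classical representation theorem for integral bilinear forms (Ryan, Proposition 3.14) as a black box, in both directions; you instead unpack that classical theorem and run the argument directly: isometric embedding $\gamma$ of $X\widehat{\boxtimes}_{\varepsilon}E$ into $\mathcal{C}(B_{X^{\#}})\widehat{\otimes}_{\varepsilon}\mathcal{C}(B_{E^{\ast}})=\mathcal{C}(B_{X^{\#}}\times B_{E^{\ast}})$, Hahn--Banach extension, Riesz representation, and for the converse a direct verification of the defining inequality $(\ref{31})$ (after absorbing the scalars $z_{i}^{\ast}$ into the second arguments, which is legitimate by linearity in $e$). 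What your route buys is a self-contained proof and, importantly, an actual proof of the norm statement $\mathfrak{I}_{int}^{LipL}(T)=\inf\Vert\mu\Vert$ with attainment, which the paper asserts but never argues; what the paper's route buys is brevity and a cleaner reduction to known bilinear theory. The one point you should make fully explicit when writing this up is the isometry of the composite embedding (injectivity of the $\varepsilon$-norm applied to $\mathcal{F}(X)\hookrightarrow\mathcal{C}(B_{X^{\#}})$ and $E\hookrightarrow\mathcal{C}(B_{E^{\ast}})$, plus $\mathcal{C}(K)\widehat{\otimes}_{\varepsilon}\mathcal{C}(L)=\mathcal{C}(K\times L)$), since without isometry the Hahn--Banach step would not preserve the norm and the equality $\Vert\mu\Vert=\mathfrak{I}_{int}^{LipL}(T)$ would degrade to an inequality; you have correctly flagged this as the delicate point.
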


\begin{proof}
Suppose $T$ is integral, by $\left( \ref{1,5}\right) $, $\widehat{T}$ is a
bounded on $X\widehat{\boxtimes }_{\varepsilon }E.$ By the identification $%
\left( \ref{1,5}\right) $, we deduce that $\widehat{T}\circ I^{-1}$ is a
bounded linear form on $\mathcal{F}\left( X\right) \widehat{\otimes }%
_{\varepsilon }E,$ and its corresponding bilinear form $B_{\widehat{T}\circ
I^{-1}}$ is defined through the following commutative diagram%
\begin{equation*}
\begin{array}{ccc}
\text{$\mathcal{F}$}\left( X\right) \widehat{\otimes }_{\varepsilon }E%
\overset{I^{-1}}{\longrightarrow } & X\widehat{\boxtimes }_{\varepsilon }E%
\overset{\widehat{T}}{\longrightarrow } & \mathbb{K} \\ 
\uparrow \sigma & B_{\widehat{T}\circ I^{-1}}\nearrow &  \\ 
\text{$\mathcal{F}$}\left( X\right) \times E &  & 
\end{array}%
\end{equation*}%
By \cite[Proposition 3.14]{ryn}, there exists a regular Borel measure $\mu $
on $B_{X^{\#}}\times B_{E^{\ast }}$\textperiodcentered\ such that%
\begin{equation*}
B_{\widehat{T}\circ I^{-1}}\left( m,e\right) =\dint\limits_{B_{X^{\#}}\times
B_{E^{\ast }}}f\left( m\right) e^{\ast }\left( e\right) d\mu \left(
f,e^{\ast }\right) .
\end{equation*}%
If we set $m=\delta _{\left( x,0\right) }$, we find%
\begin{equation*}
B_{\widehat{T}\circ I^{-1}}\left( \delta _{\left( x,0\right) },e\right) =%
\widehat{T}\circ I^{-1}\circ \sigma \left( \delta _{\left( x,0\right)
},e\right) =\widehat{T}\left( \delta _{\left( x,0\right) }\boxtimes e\right)
=T\left( x,e\right) ,
\end{equation*}%
which gives the desired result. Conversely, suppose there exists a regular
Borel measure $\mu $ on $B_{X^{\#}}\times B_{E^{\ast }}$\textperiodcentered\
such that%
\begin{equation*}
T\left( x,e\right) =\dint\limits_{B_{X^{\#}}\times B_{E^{\ast }}}f\left(
x\right) e^{\ast }\left( e\right) d\mu \left( f,e^{\ast }\right) .
\end{equation*}%
For $m=\sum_{i=1}^{n}\lambda _{i}\delta _{\left( x_{i},y_{i}\right) }$ and $%
e\in E,$ we have%
\begin{eqnarray*}
B_{\widehat{T}\circ I^{-1}}\left( m,e\right) &=&\widehat{T}\circ I^{-1}\circ
\sigma \left( m,e\right) =\widehat{T}\circ I^{-1}\left( m\otimes e\right) \\
&=&\widehat{T}\left( m\boxtimes e\right) =\sum_{i=1}^{n}T\left(
x_{i},\lambda _{i}e\right) -T\left( y_{i},\lambda _{i}e\right)
\end{eqnarray*}%
On the other hand,%
\begin{eqnarray*}
\sum_{i=1}^{n}T\left( x_{i},\lambda _{i}e\right) -T\left( y_{i},\lambda
_{i}e\right) &=&\sum_{i=1}^{n}\dint\limits_{B_{X^{\#}}\times B_{E^{\ast
}}}\left( f\left( x_{i}\right) -f\left( y_{i}\right) \right) e^{\ast }\left(
\lambda _{i}e\right) d\mu \left( f,e^{\ast }\right) \\
&=&\dint\limits_{B_{X^{\#}}\times B_{E^{\ast }}}f\left(
\sum_{i=1}^{n}\lambda _{i}\delta _{\left( x_{i},y_{i}\right) }\right)
e^{\ast }\left( e\right) d\mu \left( f,e^{\ast }\right) \\
&=&\dint\limits_{B_{X^{\#}}\times B_{E^{\ast }}}f\left( m\right) e^{\ast
}\left( e\right) d\mu \left( f,e^{\ast }\right) .
\end{eqnarray*}%
Therefore, 
\begin{equation*}
B_{\widehat{T}\circ I^{-1}}\left( m,e\right) =\dint\limits_{B_{X^{\#}}\times
B_{E^{\ast }}}f\left( m\right) e^{\ast }\left( e\right) d\mu \left(
f,e^{\ast }\right)
\end{equation*}%
This equality holds for elements of the form $m=\sum_{i=1}^{n}\lambda
_{i}\delta _{\left( x_{i},y_{i}\right) }$. By density, it extends to $%
\mathcal{F}\left( X\right) \times E.$ Then, by \cite[Proposition 3.14]{ryn}, 
$\widehat{T}\circ I^{-1}$ is bounded on $\mathcal{F}\left( X\right) \widehat{%
\otimes }_{\varepsilon }E$, which implies that $\widehat{T}$ is bounded on $X%
\widehat{\boxtimes }_{\varepsilon }E$.
\end{proof}

\begin{proposition}
Let $X$ be a pointed metric space and $E$ a Banach space. A Lip-Linear form $%
T$ on $X\times E$ is integral if and only if there exists a finite measure
space $\left( \Omega ,\Sigma ,\mu \right) $ and a Lipschitz operator $%
R:X\longrightarrow L_{\infty }\left( \mu \right) ,$ a linear operator, $%
v:E\longrightarrow L_{\infty }\left( \mu \right) $ such that%
\begin{equation}
T\left( x,e\right) =\dint\limits_{\Omega }R\left( x\right) v\left( e\right)
d\mu  \label{36}
\end{equation}%
for every $x\in X$ and $e\in E.$ We have%
\begin{equation*}
\mathfrak{I}_{int}^{LipL}(T)=\inf \left\Vert v\right\Vert Lip\left( R\right)
\mu \left( \Omega \right) ,
\end{equation*}%
where the infimum is taken over all such factorizations, and this infimum is
attained.
\end{proposition}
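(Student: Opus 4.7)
The plan is to reduce to the classical $L_\infty$-factorization of integral bilinear forms via Proposition \ref{ProTandTbil}.

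For the forward direction, assume $T$ is integral. Then $T_{bil}\colon\mathcal{F}(X)\times E\to\mathbb{K}$ is an integral bilinear form by Proposition \ref{ProTandTbil}, with $\mathfrak{I}_{int}^{2}(T_{bil})=\mathfrak{I}_{int}^{LipL}(T)$. Invoking the classical factorization theorem for integral bilinear forms (see e.g. Ryan), there exist a finite measure space $(\Omega,\Sigma,\mu)$, a bounded linear operator $S\colon\mathcal{F}(X)\to L_{\infty}(\mu)$ and a bounded linear operator $v\colon E\to L_{\infty}(\mu)$ such that
\begin{equation*}
T_{bil}(m,e)=\int_{\Omega}S(m)\,v(e)\,d\mu\qquad(m\in\mathcal{F}(X),\ e\in E),
\end{equation*}
with the infimum of $\|S\|\,\|v\|\,\mu(\Omega)$ equal to $\mathfrak{I}_{int}^{2}(T_{bil})$. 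Using the isometric identification $Lip_{0}(X;L_{\infty}(\mu))=\mathcal{L}(\mathcal{F}(X);L_{\infty}(\mu))$ from \eqref{13}, define $R:=S\circ\delta_{X}\in Lip_{0}(X;L_{\infty}(\mu))$, so that $Lip(R)=\|S\|$. Substituting $m=\delta_{x}$ and using the diagram \eqref{2.5} (which yields $T(x,e)=T_{bil}(\delta_{x},e)$) gives
\begin{equation*}
T(x,e)=\int_{\Omega}S(\delta_{x})\,v(e)\,d\mu=\int_{\Omega}R(x)\,v(e)\,d\mu,
\end{equation*}
as required.

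For the converse, suppose $T$ admits a factorization as in \eqref{36}. Let $R_{L}\colon\mathcal{F}(X)\to L_{\infty}(\mu)$ be the linearization of $R$ furnished by \eqref{13}, so $\|R_{L}\|=Lip(R)$. Define the bilinear form
\begin{equation*}
B(m,e):=\int_{\Omega}R_{L}(m)\,v(e)\,d\mu.
\end{equation*}
For $m=\delta_{x}$ one recovers $B(\delta_{x},e)=T(x,e)=T_{bil}(\delta_{x},e)$; by linearity and density of $\mathrm{span}\{\delta_{x}:x\in X\}$ in $\mathcal{F}(X)$, one has $B=T_{bil}$ on $\mathcal{F}(X)\times E$. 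The classical factorization result then tells us that $T_{bil}$ is an integral bilinear form with $\mathfrak{I}_{int}^{2}(T_{bil})\le\|R_{L}\|\,\|v\|\,\mu(\Omega)=Lip(R)\,\|v\|\,\mu(\Omega)$. Proposition \ref{ProTandTbil} finally yields that $T$ is integral.

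For the norm formula, combining the two steps, the set of triples $(\mu,R,v)$ realizing $T$ as in \eqref{36} is in bijection with the set of factorizations $(\mu,S,v)$ realizing $T_{bil}$, and the products $\|S\|\,\|v\|\,\mu(\Omega)$ and $Lip(R)\,\|v\|\,\mu(\Omega)$ coincide under this bijection. Taking infima and using the equality $\mathfrak{I}_{int}^{LipL}(T)=\mathfrak{I}_{int}^{2}(T_{bil})$ from Proposition \ref{ProTandTbil} yields
\begin{equation*}
\mathfrak{I}_{int}^{LipL}(T)=\inf\bigl\{\,\|v\|\,Lip(R)\,\mu(\Omega)\,\bigr\},
\end{equation*}
with the infimum attained because it is attained in the classical bilinear case. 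The main obstacle I anticipate is ensuring the correct normalization and verifying carefully that the bijection $S\leftrightarrow R$ between linear maps out of $\mathcal{F}(X)$ and Lipschitz maps out of $X$ respects both the norm ($\|S\|=Lip(R)$) and the integral representation, so that the infima over the two parametrizations genuinely agree and the attained minimum transfers.
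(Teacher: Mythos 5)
Your proof is correct and follows essentially the same route as the paper's: both reduce the statement to the $L_{\infty}(\mu)$-factorization of the associated integral bilinear form on $\mathcal{F}(X)\times E$ (equivalently, of the bounded functional $\widehat{T}\circ I^{-1}$ on $\mathcal{F}(X)\widehat{\otimes}_{\varepsilon}E$) and transfer back through the correspondence $R\leftrightarrow R_{L}$, the only difference being that the paper invokes the Lipschitz version of this factorization from \cite{cab} directly while you derive it from the classical bilinear result in \cite{ryn}. If anything, your write-up is more complete, since you also track the norm identity $\mathfrak{I}_{int}^{LipL}(T)=\inf \left\Vert v\right\Vert Lip\left( R\right)\mu\left(\Omega\right)$ and its attainment, which the paper's proof leaves unverified.
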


\begin{proof}
Suppose $T:X\times E\rightarrow \mathbb{K}$ is integral. Then, $\widehat{T}$
is bounded on $X\widehat{\boxtimes }_{\varepsilon }E,$ which implies that $%
\widehat{T}\circ I^{-1}$ is boundend on $\mathcal{F}\left( E\right) \widehat{%
\otimes }_{\varepsilon }F.$ By \cite[Theorem 1.3]{cab}, there exists a
finite measure space $\left( \Omega ,\Sigma ,\mu \right) $ and a Lipschitz
operator $R:X\longrightarrow L_{\infty }\left( \mu \right) ,$ a linear
operator, $v:E\longrightarrow L_{\infty }\left( \mu \right) $ such that%
\begin{equation*}
\widehat{T}\circ I^{-1}\left( \delta _{x}\otimes e\right)
=\dint\limits_{\Omega }R\left( x\right) v\left( e\right) d\mu ,\text{ for }%
x\in X\text{ and }e\in E.
\end{equation*}%
For $x\in X$ and $e\in E$%
\begin{equation*}
T\left( x,e\right) =\widehat{T}\left( \delta _{x}\boxtimes e\right) =%
\widehat{T}\circ I^{-1}\left( \delta _{x}\otimes e\right)
=\dint\limits_{\Omega }R\left( x\right) v\left( e\right) d\mu .
\end{equation*}%
Conversely, suppose there exist a Lipschitz operator $R:X\longrightarrow
L_{\infty }\left( \mu \right) $ and a linear operator, $v:E\longrightarrow
L_{\infty }\left( \mu \right) $ such that the equality $\left( \ref{36}%
\right) $ holds. Then,%
\begin{equation*}
T\left( x,e\right) =\widehat{T}\circ I^{-1}\left( \delta _{x}\otimes
e\right) =\dint\limits_{\Omega }R\left( x\right) v\left( e\right) d\mu .
\end{equation*}%
By \cite[Theorem 1.3]{cab}, $\widehat{T}\circ I^{-1}$ is bouneded on $%
\mathcal{F}\left( X\right) \widehat{\otimes }_{\varepsilon }E$, and
consequentely $\widehat{T}$ is also bouneded on $X\widehat{\boxtimes }%
_{\varepsilon }E.$ Therefore, by $\left( \ref{34}\right) ,$ $T$ is integral.
\end{proof}

\section{\textsc{Dominated (p,q)-summing Lip-Linear operators}}

The class of absolutely $(s;p,q)$-summing bilinear operators was introduced
by Matos in \cite{m}. This space is denoted by $\Pi _{s;p,q}(E\times F;G),$
with the corresponding norm $\pi _{x;p,q}\left( \cdot \right) $. When $\frac{%
1}{s}=\frac{1}{p}+\frac{1}{q}$, these operators are apply dominated $(p,q)$%
-summing. In this section, we extend this concept to Lip-Linear operators by
defining the vector space of dominated $(p,q)$-summing Lip-Linear operators,
which forms a Banach space. We establish a fundamental relationship between
Lipschitz $p$-summing operators and dominated $(p,q)$-summing Lip-Linear
operators. Specifically, a linear operator $R:X\rightarrow E$ is a Lipschitz 
$p$-summing if and only if its associated Lip-Linear form $T_{R}:X\times
E^{\ast }\rightarrow \mathbb{K}$ is dominated $(p,q)$-summing.

\begin{definition}
Let $1\leq s,p,q<\infty $ with $\frac{1}{s}=\frac{1}{p}+\frac{1}{q}.$ Let $X$
be a pointed metric space, and let $E,F$ be Banach spaces. A mapping $T\in
LipL_{0}(X\times E;F)$ is called dominated $(s;p,q)$-summing if there exists
a constant $C>0$ such that for any $x_{1},\ldots ,x_{n},y_{1},\ldots ,y_{n}$
in $X$ and $e_{1},\ldots ,e_{n}$ in $E$ we have 
\begin{equation}
\left\Vert (T(x_{i},e_{i})-T(y_{i},e_{i})))_{i=1}^{n}\right\Vert _{s}\leq
C\sup_{f\in B_{X^{\#}}}\Vert (f(x_{i})-f(y_{i}))_{i=1}^{n}\Vert _{p}\Vert
(e_{i})_{i=1}^{n}\Vert _{q,w}.  \label{41}
\end{equation}%
This class of all dominated $(p,q)$-summing Lip-Linear operators is denoted
by $\Delta _{\left( p,q\right) }^{LipL}\left( X\times E;F\right) $. In this
case, we define the norm%
\begin{equation*}
\delta _{\left( p,q\right) }^{LipL}\left( T\right) =\inf \left\{ C:C\text{
satisfies }\left( \ref{41}\right) \right\} .
\end{equation*}%
Equipped with this norm, the pair $\left( \Delta _{\left( p,q\right)
}^{LipL}\left( X\times E;F\right) ,\delta _{\left( p,q\right) }^{LipL}\left(
\cdot \right) \right) $ forms a Banach space, where we omit de proof.
\end{definition}

An important characterization of dominated $(p,q)$-summing Lip-Linear
operators is given through the Pietsch domination theorem. This allows us to
derive several interesting results. The proof relies on the generalized
Pietsch domination theorem established by Pellegrino et al. in \cite{ps} and 
\cite{pss}.

\begin{theorem}
\label{domi01} Let $1\leq s,p,q<\infty $ with $\frac{1}{s}=\frac{1}{p}+\frac{%
1}{q},$ Let $T\in LipL_{0}(X\times E;F)$. The operator $T$ is dominated $%
(p,q)$-summing if and only if there exist Radon probability measures $\mu
_{1}$ on $B_{X^{\#}}$ and $\mu _{2}$ on $B_{E^{\ast }}$ such that for all $%
x,y\in X$ and $e\in E$, we have 
\begin{equation}
\left\Vert T(x,e)-T(y,e)\right\Vert \leq C\left( \int_{B_{X^{\#}}}\left\vert
f(x)-f\left( y\right) \right\vert ^{p}d\mu _{1}\right) ^{\frac{1}{p}}\left(
\int_{B_{E^{\ast }}}\left\vert e^{\ast }(e)\right\vert ^{q}d\mu _{2}\right)
^{\frac{1}{q}}  \label{42}
\end{equation}%
Moreover, in this case $\delta _{\left( p,q\right) }^{LipL}(T)=\inf \{C:C%
\text{ verifies}\ \left( \ref{42}\right) \}$.
\end{theorem}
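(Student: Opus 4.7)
The plan is to deduce the theorem from the generalized Pietsch domination theorem of Pellegrino, Santos and Seoane-Sep\'ulveda \cite{ps, pss}, adapted to the two-parameter setting natural to the Lip-Linear framework.

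First I would cast the problem in the PSS abstract form. Take as parameter sets $X_1 := X\times X$ and $X_2 := E$, as compact Hausdorff spaces $K_1 := B_{X^{\#}}$ and $K_2 := B_{E^{\ast}}$ (both compact under the topology of pointwise convergence, as recalled in Section~1), and define the auxiliary mappings
\begin{equation*}
R_1(f,(x,y)) := |f(x)-f(y)|, \quad R_2(e^{\ast},e) := |e^{\ast}(e)|, \quad S((x,y),e) := T(x,e)-T(y,e).
\end{equation*}
With this dictionary, the summability condition $(\ref{41})$ is exactly the abstract $R_1,R_2$-$s$-summing condition with exponents $p$ and $q$ in the PSS sense.

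Second, I would verify the structural axioms of PSS: each $R_j$ is continuous in its first (compact) variable under the pointwise topology, $R_j(f,\cdot)$ is positively homogeneous, and the identities $d(x,y) = \sup_{f\in K_1} R_1(f,(x,y))$ and $\|e\| = \sup_{e^{\ast}\in K_2} R_2(e^{\ast},e)$, together with the Lip-Linear bound $\|T(x,e)-T(y,e)\| \le LipL(T)\,d(x,y)\,\|e\|$, deliver the required compatibility between $S$ and the $R_j$. Invoking the PSS theorem then produces Radon probability measures $\mu_1$ on $B_{X^{\#}}$ and $\mu_2$ on $B_{E^{\ast}}$ and a constant $C \le \delta_{(p,q)}^{LipL}(T)$ for which $(\ref{42})$ holds.

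For the converse, starting from $(\ref{42})$ I would raise both sides to the $s$-th power, sum over a finite family $(x_i,y_i,e_i)_{i=1}^n$, and apply H\"older's inequality to the resulting product with conjugate exponents $p/s$ and $q/s$ (valid because $s/p+s/q=1$). Swapping the finite sum with each integral and bounding each integral by its supremum over the corresponding unit ball recovers $(\ref{41})$ with the same constant $C$. This gives $\delta_{(p,q)}^{LipL}(T) \le \inf\{C : (\ref{42}) \text{ holds}\}$, which combined with the PSS bound yields the claimed equality of norms.

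The main obstacle will be the bookkeeping of the first step: identifying the correct multi-variable version of the Pellegrino-Santos-Seoane-Sep\'ulveda theorem (with two compact spaces and two exponents $p,q$) and mechanically checking that our concrete $R_1,R_2,S$ satisfy its axioms in the two-$K$-variable form. Once that matching is carried out, both the existence of the dominating measures and the sharpness of the constant fall out of the abstract machinery, while the converse reduces to a direct H\"older computation.
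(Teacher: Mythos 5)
Your proposal is correct and follows essentially the same route as the paper: both reduce the statement to the generalized Pietsch domination theorem of Pellegrino--Santos--Seoane-Sep\'ulveda by introducing auxiliary maps $R_1,R_2,S$ on $B_{X^{\#}}$ and $B_{E^{\ast}}$ (the paper's versions carry extra scalar parameters $\lambda^1,\lambda^2$ to match the exact formal setup of that theorem, which is the bookkeeping you anticipated). The converse direction you carry out by hand via H\"older is subsumed in the paper by the ``if and only if'' form of the abstract theorem, but your direct computation is equally valid.
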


\begin{proof}
Choosing the parameters%
\begin{equation*}
\left\{ 
\begin{array}{l}
R_{1}:B_{X^{\#}}\times \left( X\times X\times E\right) \times \mathbb{K}%
\rightarrow \mathbb{R}^{+}:R_{1}\left( f,\left( x,y,e\right) ,\lambda
^{1}\right) =\left\vert f\left( x\right) -f\left( y\right) \right\vert \\ 
R_{2}:B_{E^{\ast }}\times \left( X\times X\times E\right) \times \mathbb{K}%
\rightarrow \mathbb{R}^{+}:R_{2}\left( e^{\ast },\left( x,y,e\right)
,\lambda ^{2}\right) =\left\vert \lambda ^{2}\right\vert \left\vert e^{\ast
}\left( e\right) \right\vert \\ 
S:LipL_{0}(X,E;F)\times \left( X\times X\times E\right) \times \mathbb{K}%
\times \mathbb{K}\rightarrow \mathbb{R}^{+}: \\ 
S\left( T,\left( x,y,e\right) ,\lambda ^{1},\lambda ^{2}\right) =\left\vert
\lambda ^{2}\right\vert \left\Vert T\left( x,e\right) -T\left( y,e\right)
\right\Vert .%
\end{array}%
\right.
\end{equation*}%
These maps satisfy conditions $\left( 1\right) $ and $\left( 2\right) $ from 
\cite[Page 1255]{pss}, allowing us to conclude that $T:X\times E\rightarrow
F $ is dominated $(p,q)$-summing if and only if 
\begin{eqnarray*}
&&(\sum_{i=1}^{n}S\left( T,\left( x_{i},y_{i},e_{i}\right) ,\lambda
_{i}^{1},\lambda _{i}^{2}\right) ^{s})^{\frac{1}{s}} \\
&\leq &\sup_{f\in B_{X^{\#}}}(\sum_{i=1}^{n}R_{1}\left( f,\left(
x_{i},y_{i},e_{i}\right) ,\lambda _{i}^{1}\right) ^{p})^{\frac{1}{p}%
}\sup_{e^{\ast }\in B_{E^{\ast }}}(\sum_{i=1}^{n}R_{2}\left( e^{\ast
},\left( x_{i},y_{i},e_{i}\right) ,\lambda _{i}^{2}\right) ^{q})^{\frac{1}{q}%
}.
\end{eqnarray*}%
Thus, $T$ is $R_{1},R_{2}$-$S$-abstract $(p,q)$-summing. A result in \cite[%
Theorem 4.6]{pss} states that $T$ is $R_{1},R_{2}$-$S$-abstract $(p,q)$%
-summing if and only if there exists a positive constant $C$ and Radon
probability measures $\mu _{1}$ on $B_{X^{\#}}$ and $\mu _{2}$ on $%
B_{E^{\ast }}$ such that 
\begin{eqnarray*}
&&S\left( T,\left( x,y,e\right) ,\lambda _{1},\lambda _{2}\right) \\
&\leq &C\left( \int_{B_{X^{\#}}}R_{1}\left( f,\left( x,y,e\right) ,\lambda
_{1}\right) ^{p}d\mu _{1}\right) ^{\frac{1}{p}}\left( \int_{B_{E^{\ast
}}}R_{2}\left( e^{\ast },\left( x,y,e\right) ,\lambda _{2}\right) ^{q}d\mu
_{2}\right) ^{\frac{1}{q}}
\end{eqnarray*}%
Consequently, 
\begin{equation*}
\left\Vert T(x,e)-T(y,e)\right\Vert \leq C\left( \int_{B_{X^{\#}}}\left\vert
f(x)-f\left( y\right) \right\vert ^{p}d\mu _{1}\right) ^{\frac{1}{p}}\left(
\int_{B_{E^{\ast }}}\left\vert e^{\ast }(e)\right\vert ^{q}d\mu _{2}\right)
^{\frac{1}{q}},
\end{equation*}%
this concludes the proof.
\end{proof}

\begin{proposition}
Let $1\leq s,p,q<\infty $ with $\frac{1}{s}=\frac{1}{p}+\frac{1}{q}.$ Let $%
X,Y$ be pointed metric spaces and let $E,G,F$ be Banach spaces. If $R\in \Pi
_{p}^{L}(X;Y)$, $v\in \Pi _{q}(E;G)$ and $S\in LipL_{0}(Y\times G;F)$, then $%
S\circ (R,v)$ is dominated $(p,q)$-summing and%
\begin{equation*}
\delta _{\left( p,q\right) }^{LipL}(S\circ (R,v))\leq LipL(S)\pi
_{p}^{L}(R)\pi _{q}(v).
\end{equation*}
\end{proposition}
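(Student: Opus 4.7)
The plan is to reduce the proof to a direct application of the Pietsch-type characterization established in Theorem~\ref{domi01}, by combining the domination inequalities available for each of the three factors. First, since $R \in \Pi_{p}^{L}(X;Y)$, the Pietsch domination theorem for Lipschitz $p$-summing operators yields a Radon probability measure $\mu_{1}$ on $B_{X^{\#}}$ such that
\begin{equation*}
\|R(x)-R(y)\| \leq \pi_{p}^{L}(R)\left(\int_{B_{X^{\#}}} |f(x)-f(y)|^{p}\, d\mu_{1}(f)\right)^{1/p}
\end{equation*}
for every $x,y \in X$. Likewise, since $v \in \Pi_{q}(E;G)$, the classical Pietsch domination theorem provides a Radon probability measure $\mu_{2}$ on $B_{E^{\ast}}$ with
\begin{equation*}
\|v(e)\| \leq \pi_{q}(v)\left(\int_{B_{E^{\ast}}} |e^{\ast}(e)|^{q}\, d\mu_{2}(e^{\ast})\right)^{1/q}
\end{equation*}
for every $e \in E$. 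It is important here that the domination for $R$ lives over $B_{X^{\#}}$ (rather than $B_{Y^{\#}}$), which is exactly the kind of measure needed to invoke Theorem~\ref{domi01}.

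Next, I would exploit the Lip-Linear structure of $S$ on the composition $T = S \circ (R,v)$. For fixed $e \in E$ and $x,y \in X$, linearity of $S$ in the second slot together with its Lipschitz estimate in the first yield
\begin{equation*}
\|T(x,e)-T(y,e)\| = \|S(R(x),v(e)) - S(R(y),v(e))\| \leq LipL(S)\, d(R(x),R(y))\, \|v(e)\|.
\end{equation*}
Inserting the two Pietsch estimates produces the inequality
\begin{equation*}
\|T(x,e)-T(y,e)\| \leq C\left(\int_{B_{X^{\#}}}|f(x)-f(y)|^{p}\, d\mu_{1}\right)^{1/p}\left(\int_{B_{E^{\ast}}}|e^{\ast}(e)|^{q}\, d\mu_{2}\right)^{1/q}
\end{equation*}
with $C = LipL(S)\, \pi_{p}^{L}(R)\, \pi_{q}(v)$, which is precisely condition~(\ref{42}).

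Finally, by Theorem~\ref{domi01}, this domination forces $T = S \circ (R,v)$ to be dominated $(p,q)$-summing, and the infimum characterization of $\delta_{(p,q)}^{LipL}$ immediately gives the stated norm estimate
\begin{equation*}
\delta_{(p,q)}^{LipL}(S\circ (R,v)) \leq LipL(S)\, \pi_{p}^{L}(R)\, \pi_{q}(v).
\end{equation*}
There is no substantive obstacle: this is essentially the ideal property of the class $\Delta_{(p,q)}^{LipL}$, and the only conceptual point to verify is that the two Pietsch measures naturally live on the correct compact sets $B_{X^{\#}}$ and $B_{E^{\ast}}$ demanded by Theorem~\ref{domi01}, which is precisely why the Lipschitz $p$-summing notion (rather than a linear $p$-summing notion on $\mathcal{F}(X)$) is the right hypothesis on $R$.
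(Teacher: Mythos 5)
Your proposal is correct and follows essentially the same route as the paper's own proof: bound $\Vert S(R(x),v(e))-S(R(y),v(e))\Vert$ by $LipL(S)\,d(R(x),R(y))\,\Vert v(e)\Vert$, insert the Pietsch dominations for $R$ (over $B_{X^{\#}}$, via Farmer--Johnson) and for $v$ (over $B_{E^{\ast}}$), and conclude by Theorem~\ref{domi01}. The only cosmetic slip is writing $\Vert R(x)-R(y)\Vert$ where $d(R(x),R(y))$ is meant, since $Y$ is only a pointed metric space.
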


\begin{proof}
For all $x,y\in X$ and $e\in E$ we have 
\begin{equation*}
\left\Vert S(R(x),v(e))-S(R(y),v(e))\right\Vert \leq
LipL(S)d(R(x),R(y))\left\Vert v(e)\right\Vert .
\end{equation*}%
Since $R$ is Lipschitz $p$-summing, by \cite[Theorem 1]{Farmer},\ there
exists a Radon probability measure $\mu _{1}$ on $B_{X^{\#}}$ such that 
\begin{equation*}
d(R(x),R(y))\leq \pi _{p}^{L}\left( R\right) \left(
\int_{B_{X^{\#}}}\left\vert f(x)-f\left( y\right) \right\vert ^{p}d\mu
_{1}\right) ^{\frac{1}{p}}.
\end{equation*}%
Similarly, as $v$ is $q$-summing, by \cite[Theorem 2.12]{distel}, there
exists a Radon probability measure $\mu _{2}$ on $B_{E^{\ast }}$ such that 
\begin{equation*}
\left\Vert v(e)\right\Vert \leq \pi _{q}\left( v\right) \left(
\int_{B_{E^{\ast }}}\left\vert e^{\ast }(e)\right\vert ^{q}d\mu _{2}\right)
^{\frac{1}{q}}.
\end{equation*}%
Therefore, by Theorem \ref{domi01}$,$ $T$ is dominated $(p,q)$-summing and 
\begin{equation*}
\delta _{\left( p,q\right) }^{LipL}(T)\leq LipL(S)\pi _{p}^{L}(R)\pi _{q}(v).
\end{equation*}
\end{proof}

\begin{theorem}
\label{equiTheorem}Let $1\leq s,p,q<\infty $ with $\frac{1}{s}=\frac{1}{p}+%
\frac{1}{q}.$ For $T\in LipL_{0}(X\times E;F)$, the following assertions are
equivalent.

1) The Lip-Linear operator $T$ belongs to $\Delta _{\left( p,q\right)
}^{LipL}(X\times E;F)$.

2) The Lipschitz operator $A_{T}$ belongs to $\Pi _{p}^{L}(X;\Pi _{q}(E;F))$.

3) The linear operator $B_{T}$ belongs to $\Pi _{q}(E;\Pi _{p}^{L}(X;F))$.

Additionally, the following equality holds%
\begin{equation*}
\delta _{\left( p,q\right) }^{LipL}(T)=\pi _{p}^{L}(A_{T})=\pi _{q}(B_{T}).
\end{equation*}%
As a direct consequence, we have the following isometric identifications%
\begin{equation}
\Delta _{\left( p,q\right) }^{LipL}(X\times E;F)=\Pi _{p}^{L}(X;\Pi
_{q}(E;F))=\Pi _{q}(E;\Pi _{p}^{L}(X;F)).  \label{43}
\end{equation}
\end{theorem}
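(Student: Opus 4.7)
The strategy is to characterize each of the three conditions via a Pietsch-type integral domination and verify that these dominations coincide. By Theorem \ref{domi01}, condition (1) is equivalent to the existence of Radon probability measures $\mu_{1}$ on $B_{X^{\#}}$ and $\mu_{2}$ on $B_{E^{\ast}}$ such that
\begin{equation*}
\|T(x,e)-T(y,e)\|\leq C\Bigl(\int_{B_{X^{\#}}}|f(x)-f(y)|^{p}d\mu_{1}\Bigr)^{1/p}\Bigl(\int_{B_{E^{\ast}}}|e^{\ast}(e)|^{q}d\mu_{2}\Bigr)^{1/q},
\end{equation*}
with the infimum of admissible $C$ equal to $\delta_{(p,q)}^{LipL}(T)$. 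By the Farmer--Johnson Pietsch theorem for Lipschitz $p$-summing operators applied to $A_{T}:X\to\Pi_{q}(E;F)$, condition (2) is equivalent to the existence of $\mu_{1}$ on $B_{X^{\#}}$ such that $\pi_{q}(A_{T}(x)-A_{T}(y))$ is bounded by the first integral factor (with $C=\pi_{p}^{L}(A_{T})$). Analogously, the classical Pietsch theorem applied to the linear operator $B_{T}:E\to\Pi_{p}^{L}(X;F)$ characterizes (3) by the existence of $\mu_{2}$ on $B_{E^{\ast}}$ with $\pi_{p}^{L}(B_{T}(e))$ bounded by the second integral factor (with $C=\pi_{q}(B_{T})$).

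The implication $(1)\Rightarrow(2)$ is immediate: fixing $x,y\in X$ and reading the inequality above with respect to $e$, the bound is precisely the Pietsch domination of the linear operator $e\mapsto(A_{T}(x)-A_{T}(y))(e)$ by $\mu_{2}$ with Pietsch constant $\delta_{(p,q)}^{LipL}(T)(\int|f(x)-f(y)|^{p}d\mu_{1})^{1/p}$. One direction of the classical Pietsch theorem then yields $\pi_{q}(A_{T}(x)-A_{T}(y))$ bounded by this same expression, which is precisely the Pietsch domination making $A_{T}$ Lipschitz $p$-summing, giving $\pi_{p}^{L}(A_{T})\leq\delta_{(p,q)}^{LipL}(T)$. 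Fixing instead $e$ and reading in the variable $x$ proves $(1)\Rightarrow(3)$ with $\pi_{q}(B_{T})\leq\delta_{(p,q)}^{LipL}(T)$.

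For the reverse direction $(2)\Rightarrow(1)$, the plan is to verify the sequence characterization (\ref{41}) directly and then invoke Theorem \ref{domi01} to extract the common $\mu_{2}$. Given sequences $(x_{i},y_{i},e_{i})_{i=1}^{n}$, set $v_{i}:=A_{T}(x_{i})-A_{T}(y_{i})$ and combine three ingredients: (a) the sequential Lipschitz $p$-summing bound $(\sum_{i}\pi_{q}(v_{i})^{p})^{1/p}\leq\pi_{p}^{L}(A_{T})\sup_{f}(\sum_{i}|f(x_{i})-f(y_{i})|^{p})^{1/p}$ (the Farmer--Johnson characterization); (b) a common Radon probability measure $\mu$ on $B_{E^{\ast}}$ for the finite family $\{v_{i}\}_{i=1}^{n}$ that realizes the Pietsch inequality $\|v_{i}(e)\|\leq\pi_{q}(v_{i})(\int|e^{\ast}(e)|^{q}d\mu)^{1/q}$ for each $i$, obtained via a weak-$\ast$ compactness/Hahn--Banach argument on the convex set of Radon probabilities on $B_{E^{\ast}}$; (c) H\"older's inequality applied with the conjugate exponents $p/s$ and $q/s$ (whose reciprocals sum to $1$ since $1/s=1/p+1/q$), together with the Fubini identity $\sum_{i}\int|e^{\ast}(e_{i})|^{q}d\mu=\int\sum_{i}|e^{\ast}(e_{i})|^{q}d\mu\leq\|(e_{i})\|_{q,w}^{q}$. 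Stringing these together gives $(\sum_{i}\|v_{i}(e_{i})\|^{s})^{1/s}\leq\pi_{p}^{L}(A_{T})\sup_{f}(\sum_{i}|f(x_{i})-f(y_{i})|^{p})^{1/p}\|(e_{i})\|_{q,w}$, which is (\ref{41}) with constant $\pi_{p}^{L}(A_{T})$. The implication $(3)\Rightarrow(1)$ is symmetric, exchanging the roles of $\mu_{1}$ and $\mu_{2}$.

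The main obstacle is step (b) above: producing a common Pietsch measure for the finite family $\{v_{i}\}$ without inflating the constants $\pi_{q}(v_{i})$, as naive approaches (forming the vector-valued composite $V:E\to\ell_{q}^{n}(F)$, or averaging individual Pietsch measures) introduce a factor $n^{1/q}$. The cleanest fix uses the abstract Pietsch domination framework of Pellegrino--Santos--Seoane, already invoked in the proof of Theorem \ref{domi01}, which converts the sequence inequality directly into a single integral Pietsch domination. Once this is done, tracking constants through the three implications yields the norm equalities $\delta_{(p,q)}^{LipL}(T)=\pi_{p}^{L}(A_{T})=\pi_{q}(B_{T})$, and the isometric identifications (\ref{43}) are immediate.
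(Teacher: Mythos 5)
Your forward implications $(1)\Rightarrow(2)$ and $(1)\Rightarrow(3)$ match the paper's: one reads the two-measure domination of Theorem \ref{domi01} one variable at a time, exactly as you describe. The divergence is in the reverse direction. The paper never returns to the sequence inequality (\ref{41}); it runs the cycle $(1)\Rightarrow(2)\Rightarrow(3)\Rightarrow(1)$ entirely at the level of pointwise Pietsch dominations (Farmer--Johnson for $A_T$, then the linear Pietsch theorem for $B_T:E\to\Pi_p^L(X;F)$, re-entering class (1) via Theorem \ref{domi01}), and adds an explicit surjectivity argument for (\ref{43}). You instead attack $(2)\Rightarrow(1)$ by verifying (\ref{41}) directly, which is where the trouble lies.

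Step (b) is a genuine gap: a finite family of $q$-summing operators need not admit a common Pietsch measure realizing the individual constants $\pi_q(v_i)$, and no weak-$\ast$ compactness or Hahn--Banach argument will produce one. Take $q=2$, $E=F=\ell_2^2$, and let $v_1,v_2$ be the orthogonal projections onto the coordinate axes, so $\pi_2(v_i)=\Vert v_i\Vert_{HS}=1$. Testing $e=u_1$ forces any measure dominating $v_1$ with constant $1$ to be concentrated on $\{\pm u_1\}$, and such a measure gives $\int|\langle e^{\ast},u_2\rangle|^2d\mu=0<\Vert v_2u_2\Vert^2$, so no common measure exists. Worse, the inequality your H\"older step would deliver, namely $\bigl(\sum_i\Vert v_i(e_i)\Vert^s\bigr)^{1/s}\leq\bigl(\sum_i\pi_q(v_i)^p\bigr)^{1/p}\Vert(e_i)\Vert_{q,w}$, is itself false: with $e_i=u_i$ the left side of the underlying joint bound $\sum_i\Vert v_i(e_i)\Vert^q/\pi_q(v_i)^q\leq\Vert(e_i)\Vert_{q,w}^q$ equals $2$ while the right side equals $1$. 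So the $n^{1/q}$ loss you flag is not an artifact of a naive method; it is unavoidable at that step. Your proposed repair via the Pellegrino--Santos--Seoane framework is circular: that machinery converts the sequence inequality (\ref{41}) into the domination (\ref{42}), so it takes as input precisely what $(2)\Rightarrow(1)$ must establish. (The theorem is not contradicted by this example, because the Lipschitz weak norm $\sup_{f\in B_{X^{\#}}}\Vert(f(x_i)-f(y_i))_i\Vert_p$ appearing in (\ref{41}) is much larger than a linear weak norm --- bump functions make it comparable to the strong $\ell_p$-norm of the distances --- and that slack is exactly what your route discards.) To close the argument you should follow the paper's route through assertion (3): from $\pi_q(A_T(x)-A_T(y))\leq\pi_p^L(A_T)\bigl(\int|f(x)-f(y)|^pd\mu_1\bigr)^{1/p}$ deduce a Pietsch domination for the linear operator $B_T$, conclude $B_T\in\Pi_q(E;\Pi_p^L(X;F))$, and then combine the single Pietsch measure of $B_T$ with those of the $B_T(e)$ to recover (\ref{42}) --- noting that even there one must argue that a single measure $\mu_1$ can be chosen to work for all $B_T(e)$ simultaneously, a point the paper treats rather lightly.
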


\begin{proof}
$1)\Longrightarrow 2):$ Let $T\in \Delta _{\left( p,q\right)
}^{LipL}(X\times E;F)$ and fix $x,y\in X.$ We aim to show that $%
A_{T}(x)-A_{T}\left( y\right) \in \Pi _{q}(E;F).$ Let $e\in E$. By Theorem %
\ref{domi01}, 
\begin{eqnarray*}
\left\Vert \left( A_{T}(x)-A_{T}\left( y\right) \right) \left( e\right)
\right\Vert &=&\left\Vert A_{T}(x)\left( e\right) -A_{T}(y)\left( e\right)
\right\Vert =\left\Vert T(x,e)-T(y,e)\right\Vert \\
&\leq &\delta _{\left( p,q\right) }^{LipL}(T)(\int_{B_{X^{\#}}}\left\vert
f(x)-f\left( y\right) \right\vert ^{p}d\mu _{1})^{\frac{1}{p}%
}(\int_{B_{E^{\ast }}}\left\vert e^{\ast }(e)\right\vert ^{q}d\mu _{2})^{%
\frac{1}{q}} \\
&\leq &\delta _{\left( p,q\right) }^{LipL}(T)(\int_{B_{X^{\#}}}\left\vert
f(x)-f\left( y\right) \right\vert ^{p}d\mu _{1})^{\frac{1}{p}%
}(\int_{B_{E^{\ast }}}\left\vert e^{\ast }(e)\right\vert ^{q}d\mu _{2})^{%
\frac{1}{q}}.
\end{eqnarray*}%
This shows that $A_{T}(x)-A_{T}\left( y\right) \in \Pi _{q}(E;F)$ with%
\begin{equation}
\pi _{q}\left( A_{T}(x)-A_{T}\left( y\right) \right) \leq \delta _{\left(
p,q\right) }^{LipL}(T)\left( \int_{B_{X^{\#}}}\left\vert f(x)-f\left(
y\right) \right\vert ^{p}d\mu _{1}\right) ^{\frac{1}{p}}.  \label{44}
\end{equation}%
Setting $y=0$ in $\left( \ref{44}\right) $ implies $A_{T}(x)\in \Pi
_{q}(E;F).$ Therefore, $A_{T}$ is Lipschitz $p$-summing, and%
\begin{equation*}
\pi _{p}^{L}(A_{T})\leq \delta _{\left( p,q\right) }^{LipL}(T).
\end{equation*}%
$2)\Longrightarrow 3):$ Let $e\in E.$ We aim to show $B_{T}\left( e\right)
\in \Pi _{p}^{L}(X;F).$ For $x,y\in X,$%
\begin{equation*}
\left\Vert B_{T}\left( e\right) \left( x\right) -B_{T}\left( e\right) \left(
y\right) \right\Vert =\left\Vert A_{T}(x)\left( e\right) -A_{T}(y)\left(
e\right) \right\Vert .
\end{equation*}%
Using the $q$-summing property of $A_{T}(x)-A_{T}\left( y\right) ,$ followed
the Lipschitz $p$-summing property of $A_{T},$ we obtain%
\begin{eqnarray*}
\left\Vert B_{T}\left( e\right) \left( x\right) -B_{T}\left( e\right) \left(
y\right) \right\Vert &\leq &\pi _{q}\left( A_{T}(x)-A_{T}\left( y\right)
\right) \left( \int_{B_{E^{\ast }}}\left\vert e^{\ast }(e)\right\vert
^{q}d\mu _{2}\right) ^{\frac{1}{q}} \\
&\leq &\pi _{p}^{L}(A_{T})\left( \int_{B_{X^{\#}}}\left\vert f(x)-f\left(
y\right) \right\vert ^{p}d\mu _{1}\right) ^{\frac{1}{p}}\left(
\int_{B_{E^{\ast }}}\left\vert e^{\ast }(e)\right\vert ^{q}d\mu _{2}\right)
^{\frac{1}{q}}.
\end{eqnarray*}%
This shows that $B_{T}\left( e\right) $ is Lipschitz $p$-summing, and%
\begin{equation*}
\pi _{p}^{L}\left( B_{T}\left( e\right) \right) \leq \pi
_{p}^{L}(A_{T})\left( \int_{B_{E^{\ast }}}\left\vert e^{\ast }(e)\right\vert
^{q}d\mu _{2}\right) ^{\frac{1}{q}}.
\end{equation*}%
Thus, $B_{T}$ is $q$-summing, and%
\begin{equation*}
\pi _{q}\left( B_{T}\right) \leq \pi _{p}^{L}(A_{T}).
\end{equation*}%
$3)\Longrightarrow 1):$ Let $x,y\in X$ and $e\in E$. Then,%
\begin{equation*}
\Vert T(x,e)-T(y,e)\Vert =\left\Vert B_{T}\left( e\right) \left( x\right)
-B_{T}\left( e\right) \left( y\right) \right\Vert .
\end{equation*}%
Since $B_{T}\left( e\right) $ is Lipschitz $p$-summing, there exists a Radon
probability measure $\mu _{1}$ on $B_{X^{\#}}$ such that%
\begin{eqnarray*}
\Vert T(x,e)-T(y,e)\Vert &=&\left\Vert B_{T}\left( e\right) \left( x\right)
-B_{T}\left( e\right) \left( y\right) \right\Vert \\
&\leq &\pi _{p}^{L}(B_{T}\left( e\right) )\left( \int_{B_{X^{\#}}}\left\vert
f(x)-f\left( y\right) \right\vert ^{p}d\mu _{1}\right) ^{\frac{1}{p}}.
\end{eqnarray*}%
Given that $B_{T}$ is $q$-summing, there exists a Radon probability measures 
$\mu _{2}$ on $B_{E^{\ast }}$ such that%
\begin{equation*}
\Vert T(x,e)-T(y,e)\Vert \leq \pi _{q}(B_{T})\left(
\int_{B_{X^{\#}}}\left\vert f(x)-f\left( y\right) \right\vert ^{p}d\mu
_{1}\right) ^{\frac{1}{p}}\left( \int_{B_{E^{\ast }}}\left\vert e^{\ast
}(e)\right\vert ^{q}d\mu _{2}\right) ^{\frac{1}{q}}.
\end{equation*}%
Thus, $T$ is dominated $(p,q)$-summing, and%
\begin{equation*}
\delta _{\left( p,q\right) }^{LipL}(T)\leq \pi _{q}(B_{T}).
\end{equation*}%
To complete the proof, we need to establish the surjectivity. Let $S\in \Pi
_{p}^{L}(X;\Pi _{q}(E;F))$. Define a Lip-Linear operator $T_{S}:X\times
E\rightarrow F$ by%
\begin{equation*}
T_{S}\left( x,e\right) =S\left( x\right) \left( e\right) .
\end{equation*}%
We claim that $T_{S}$ is dominated $(p,q)$-summing. Indeed, let $x,y\in X$
and $e\in E$. Then,%
\begin{equation*}
\left\Vert T_{S}(x,e)-T_{S}(y,e)\right\Vert =\left\Vert S\left( x\right)
\left( e\right) -S\left( y\right) \left( e\right) \right\Vert .
\end{equation*}%
Using the $q$-summing property of $S\left( x\right) -S\left( y\right) ,$%
\begin{equation*}
\left\Vert T_{S}(x,e)-T_{S}(y,e)\right\Vert \leq \pi _{q}\left( S\left(
x\right) -S\left( y\right) \right) \left( \int_{B_{E^{\ast }}}\left\vert
e^{\ast }(e)\right\vert ^{q}d\mu _{2}\right) ^{\frac{1}{q}}
\end{equation*}%
Since $S\in \Pi _{p}^{L}(X;\Pi _{q}(E;F)),$ we have%
\begin{equation*}
\left\Vert T_{S}(x,e)-T_{S}(y,e)\right\Vert \leq \pi _{p}^{L}\left( S\right)
\left( \int_{B_{X^{\#}}}\left\vert f(x)-f\left( y\right) \right\vert
^{p}d\mu _{1}\right) ^{\frac{1}{p}}\left( \int_{B_{E^{\ast }}}\left\vert
e^{\ast }(e)\right\vert ^{q}d\mu _{2}\right) ^{\frac{1}{q}}.
\end{equation*}%
This proves that $T_{S}$ dominated $(p,q)$-summing, and hence $S$ induces a
surjective correspondence. By similar arguments, a corresponding
identification holds for an operator in $\Pi _{q}(E;\Pi _{p}^{L}(X;F)).$
Thus, the isometric identification $\left( \ref{43}\right) $ is established.
\end{proof}

\begin{theorem}
Let $1\leq s,p,q<\infty $ with $\frac{1}{s}=\frac{1}{p}+\frac{1}{q}.$ The
following assertions are equivalent.

1) The Lipschitz operator $R:X\rightarrow E$ is Lipschitz $p$-summing.

2) The Lip-Linear operator $T_{R}$ belongs to $\Delta _{\left( p,q\right)
}^{LipL}(X\times E^{\ast })$.

Additionally, the following equality holds%
\begin{equation*}
\pi _{p}^{L}\left( R\right) =\delta _{\left( p,q\right) }^{LipL}(T).
\end{equation*}%
As a direct consequence, we have the following isometric identification%
\begin{equation*}
\Pi _{p}^{L}(X;E)=\Delta _{\left( p,q\right) }^{LipL}(X\times E^{\ast }).
\end{equation*}
\end{theorem}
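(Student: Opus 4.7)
The plan is to reduce the statement to Theorem \ref{equiTheorem}, by identifying the associated Lipschitz operator $A_{T_R}$ with the canonical composition $J_E\circ R$, where $J_E\colon E\hookrightarrow E^{\ast\ast}$ is the canonical isometric embedding. By the definition of $A_{T_R}$ and of $T_R$ in $(\ref{1,1})$, one computes directly
\[
A_{T_R}(x)(e^{\ast})=T_R(x,e^{\ast})=e^{\ast}(R(x))=J_E(R(x))(e^{\ast})
\]
for every $x\in X$ and $e^{\ast}\in E^{\ast}$, so $A_{T_R}=J_E\circ R$.

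The first key step is the isometric identification $\Pi_q(E^{\ast};\mathbb{K})=E^{\ast\ast}$. The inclusion $\pi_q(\phi)\geq\|\phi\|$ is automatic, while for the reverse I plan to test the defining supremum against the single functional $\phi/\|\phi\|\in B_{E^{\ast\ast}}$: for any finite family $(e_i^{\ast})\subset E^{\ast}$,
\[
\Bigl(\sum_i |\phi(e_i^{\ast})|^q\Bigr)^{1/q}
\leq \|\phi\|\sup_{e^{\ast\ast}\in B_{E^{\ast\ast}}}\Bigl(\sum_i |e^{\ast\ast}(e_i^{\ast})|^q\Bigr)^{1/q},
\]
which gives $\pi_q(\phi)\leq\|\phi\|$. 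Consequently $\Pi_q(E^{\ast};\mathbb{K})=\mathcal{L}(E^{\ast};\mathbb{K})=E^{\ast\ast}$ as Banach spaces.

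With these two identifications in hand, Theorem \ref{equiTheorem} applied to $T_R\in LipL_0(X\times E^{\ast};\mathbb{K})$ gives
\[
T_R\in\Delta_{(p,q)}^{LipL}(X\times E^{\ast})\ \Longleftrightarrow\ A_{T_R}=J_E\circ R\in \Pi_p^L(X;\Pi_q(E^{\ast};\mathbb{K}))=\Pi_p^L(X;E^{\ast\ast}),
\]
with $\delta_{(p,q)}^{LipL}(T_R)=\pi_p^L(A_{T_R})$. The final step is then to observe that, since $J_E$ is an isometric embedding, $R$ is Lipschitz $p$-summing into $E$ if and only if $J_E\circ R$ is Lipschitz $p$-summing into $E^{\ast\ast}$, with $\pi_p^L(J_E\circ R)=\pi_p^L(R)$. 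This is immediate from the Pietsch-type domination inequality applied before and after composition with $J_E$, using $\|J_E(R(x))-J_E(R(y))\|=\|R(x)-R(y)\|$.

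Combining everything yields the desired equivalence and the equality of norms $\pi_p^L(R)=\delta_{(p,q)}^{LipL}(T_R)$, and the identification $\Pi_p^L(X;E)=\Delta_{(p,q)}^{LipL}(X\times E^{\ast})$ then follows from the isometric correspondence $R\leftrightarrow T_R$. I expect the only mildly delicate point to be the verification $\Pi_q(E^{\ast};\mathbb{K})=E^{\ast\ast}$; everything else is a routine application of Theorem \ref{equiTheorem} together with the functorial behaviour of $\pi_p^L$ under isometric embeddings.
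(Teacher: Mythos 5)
Your proof is correct and follows essentially the same route as the paper: both identify $A_{T_R}$ with the composition $k_E\circ R$ of $R$ with the canonical embedding into $E^{\ast\ast}$ and then reduce the statement to Theorem \ref{equiTheorem} via the injectivity of the Lipschitz $p$-summing norm under isometric embeddings. You are in fact slightly more careful than the paper in explicitly verifying the isometric identification $\Pi _{q}(E^{\ast };\mathbb{K})=E^{\ast \ast }$, a step the paper leaves implicit.
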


\begin{proof}
It is straightforward to see that $k_{E}\circ R=A_{T_{R}},$%
\begin{equation*}
\begin{array}{ccc}
X & \overset{R}{\longrightarrow } & E \\ 
& \searrow A_{T_{R}} & \downarrow k_{E} \\ 
&  & E^{\ast \ast }%
\end{array}%
\end{equation*}%
Indeed, for $x\in X$ and $e^{\ast }\in E^{\ast }$ we have%
\begin{equation*}
k_{F}\circ R\left( x\right) \left( e^{\ast }\right) =\left\langle e^{\ast
},R\left( x\right) \right\rangle =T_{R}\left( x,e^{\ast }\right)
=A_{T_{R}}\left( x\right) \left( e^{\ast }\right)
\end{equation*}%
Consequently, by the injectivity property of Lipschitz $p$-summing
operators, $R$ is Lipschitz $p$-summing if and only if $A_{T_{R}}$ is
Lipschitz $p$-summing and%
\begin{equation*}
\pi _{p}^{L}\left( R\right) =\pi _{p}^{L}\left( k_{F}\circ R\right) =\pi
_{p}^{L}\left( A_{T_{R}}\right) =\delta _{\left( p,q\right) }^{LipL}(T).
\end{equation*}
\end{proof}

If $E,F$ and $G$ are Banach spaces and $T:E\times F\rightarrow G$ is a
bilinear operator, the associated operators 
\begin{equation*}
\begin{array}{llll}
A_{T}: & E & \longrightarrow & \mathcal{L}\left( F;G\right) \\ 
& x & \mapsto & A_{T}\left( x\right) \left( y\right) =T\left( x,y\right)%
\end{array}%
\text{ and }%
\begin{array}{llll}
B_{T}: & F & \longrightarrow & \mathcal{L}(E;G) \\ 
& y & \mapsto & B_{T}\left( y\right) \left( x\right) =T\left( x,y\right)%
\end{array}%
\end{equation*}%
are linear. We now state the following result.

\begin{corollary}
Let $1\leq s,p,q<\infty $ with $\frac{1}{s}=\frac{1}{p}+\frac{1}{q}.$ Let $%
E,F$ and $G$ be Banach spaces. For a bilinear operator $T:E\times
F\rightarrow G$, the following assertions are equivalent

1) The bilinear operator $T$ belongs to $\Pi _{s;p,q}(E\times F;G)$.

2) The bilinear operator $T$ belongs to $\Delta _{\left( p,q\right)
}^{LipL}(E\times F;G)$.

3) The linear operator $A_{T}$ belongs to $\Pi _{p}(E;\Pi _{q}(F;G))$.

4) The linear operator $B_{T}$ belongs to $\Pi _{q}(F;\Pi _{p}(E;G)).$

Moreover, we have%
\begin{equation*}
\delta _{\left( p,q\right) }^{LipL}(T)=\pi _{s;p,q}(T)=\pi _{p}(A_{T})=\pi
_{q}(B_{T}).
\end{equation*}
\end{corollary}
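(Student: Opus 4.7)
The plan is to reduce this bilinear corollary to the already-proved Lip-Linear result, Theorem \ref{equiTheorem}, and to Farmer-Johnson's theorem that the Lipschitz $p$-summing norm coincides with the $p$-summing norm on linear operators. Concretely, I would first observe that since $\mathcal{B}(E\times F;G)\subset LipL_{0}(E\times F;G)$ (see inclusion $(\ref{22})$), the bilinear operator $T$ can legitimately be treated as a Lip-Linear operator, so Theorem \ref{equiTheorem} applies. It gives at once the chain
\begin{equation*}
T\in \Delta _{(p,q)}^{LipL}(E\times F;G)\;\Longleftrightarrow\; A_{T}\in \Pi _{p}^{L}(E;\Pi _{q}(F;G))\;\Longleftrightarrow\; B_{T}\in \Pi _{q}(F;\Pi _{p}^{L}(E;G)),
\end{equation*}
with matching norms $\delta _{(p,q)}^{LipL}(T)=\pi _{p}^{L}(A_{T})=\pi _{q}(B_{T})$.

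Next, I would use the crucial point that when $T$ is bilinear, the associated maps $A_{T}:E\to \mathcal{L}(F;G)$ and $B_{T}:F\to \mathcal{L}(E;G)$ are \emph{linear}, not merely Lipschitz. By Farmer-Johnson's theorem, for any linear operator $v$ between Banach spaces one has $\pi _{p}^{L}(v)=\pi _{p}(v)$. Applying this to $A_{T}$ (which is linear from $E$ to the Banach space $\Pi _{q}(F;G)$) yields $A_{T}\in \Pi _{p}^{L}(E;\Pi _{q}(F;G))$ if and only if $A_{T}\in \Pi _{p}(E;\Pi _{q}(F;G))$, and likewise the Lipschitz $p$-summing condition inside the range $\Pi _{p}^{L}(E;G)$ in assertion $4')$ reduces to genuine $p$-summability. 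This produces the equivalences $2)\Leftrightarrow 3)\Leftrightarrow 4)$ together with all the norm identifications $\pi _{p}^{L}(A_{T})=\pi _{p}(A_{T})$ and $\pi _{q}(B_{T})$ already matching $\delta _{(p,q)}^{LipL}(T)$.

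The remaining step is to reconcile $1)$ with $3)$ (equivalently with $2)$). This is exactly Matos's classical characterization of $(s;p,q)$-summing bilinear operators in the ``diagonal'' case $\frac{1}{s}=\frac{1}{p}+\frac{1}{q}$: a bilinear $T$ is dominated $(p,q)$-summing if and only if $A_{T}$ is $p$-summing with values in the $q$-summing operators, with $\pi _{s;p,q}(T)=\pi _{p}(A_{T})$. Alternatively, one can derive $1)\Leftrightarrow 2)$ directly by comparing the two Pietsch-type dominations: Theorem \ref{domi01} gives a domination for $\Delta _{(p,q)}^{LipL}$ over $B_{E^{\#}}\times B_{F^{\ast}}$, while Matos's version for $\Pi _{s;p,q}$ gives one over $B_{E^{\ast}}\times B_{F^{\ast}}$; bilinearity of $T$ in the first slot (take $y=0$ in $(\ref{42})$) converts the Lip-domination into the bilinear one, and the converse follows because, for a linear $A_{T}$, Farmer-Johnson permits replacing the sup over $B_{E^{\#}}$ by the sup over $B_{E^{\ast}}$ without change.

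The main obstacle I anticipate is precisely this last reconciliation: the Lip-Linear Pietsch domination $(\ref{42})$ is a priori weaker (larger sup set $B_{E^{\#}}\supsetneq B_{E^{\ast}}$) than Matos's bilinear domination, so proving the norms actually coincide relies essentially on the linearity of $A_{T}$ combined with Farmer-Johnson. Once this equality of constants is verified, the four norm identities $\delta _{(p,q)}^{LipL}(T)=\pi _{s;p,q}(T)=\pi _{p}(A_{T})=\pi _{q}(B_{T})$ fall out immediately.
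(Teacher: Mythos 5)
Your proposal is correct and follows essentially the same route as the paper: the paper also deduces $2)\Leftrightarrow 3)\Leftrightarrow 4)$ from Theorem \ref{equiTheorem} together with Farmer--Johnson's identification $\pi_p^L=\pi_p$ for the linear maps $A_T$ and $B_T$, and links assertion $1)$ to the rest by comparing the sup over $B_{E^{\ast}}$ with that over $B_{E^{\#}}$ (for $1)\Rightarrow 2)$) and by chaining the two Pietsch dominations of $B_T(e)$ and $B_T$ (for $4)\Rightarrow 1)$), exactly the reconciliation you describe. The only cosmetic difference is that the paper proves these last two implications directly rather than quoting Matos's characterization as a black box.
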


\begin{proof}
$1)\Longrightarrow 2):$ Assume $T\in \Pi _{s;p,q}(E\times F;G).$ For $\left(
x_{i}\right) _{i=1}^{n},\left( y_{i}\right) _{i=1}^{n}\subset E$ and $\left(
e_{i}\right) _{i=1}^{n}\in F$ we have%
\begin{eqnarray*}
\left\Vert (T(x_{i},e_{i})-T(y_{i},e_{i})))_{i=1}^{n}\right\Vert _{s}
&=&\left\Vert (T(x_{i}-y_{i},e_{i}))_{i=1}^{n}\right\Vert _{s} \\
&\leq &\pi _{s;p,q}(T)\sup_{x^{\ast }\in B_{E^{\ast }}}\Vert (x^{\ast
}(x_{i}-y_{i}))_{i=1}^{n}\Vert _{p}\Vert (e_{i})_{i=1}^{n}\Vert _{q,w} \\
&\leq &\pi _{s;p,q}(T)\sup_{x^{\ast }\in B_{E^{\ast }}}\Vert (x^{\ast
}\left( x_{i}\right) -x^{\ast }\left( y_{i}\right) )_{i=1}^{n}\Vert
_{p}\Vert (e_{i})_{i=1}^{n}\Vert _{q,w} \\
&\leq &\pi _{s;p,q}(T)\sup_{f\in B_{E^{\#}}}\Vert (f\left( x_{i}\right)
-f\left( y_{i}\right) )_{i=1}^{n}\Vert _{p}\Vert (e_{i})_{i=1}^{n}\Vert
_{q,w}.
\end{eqnarray*}%
Then, $T$ is dominated $\left( p,q\right) $-summing and 
\begin{equation*}
\delta _{\left( p,q\right) }^{LipL}(T)\leq \pi _{s;p,q}(T).
\end{equation*}%
$2)\Longrightarrow 3):$ By Theorem $\ref{equiTheorem},$ $A_{T}:E\rightarrow $
$\Pi _{q}(F;G)$ is Lipschitz $p$-summing. Using the fact, given by Farmer 
\cite{Farmer}, that the definitions of $p$-summing and Lipschitz $p$-summing
coincide for linear operators. It follows that $A_{T}$ is $p$-summing, and%
\begin{equation*}
\delta _{\left( p,q\right) }^{LipL}(T)=\pi _{p}^{L}(A_{T})=\pi _{p}(A_{T}).
\end{equation*}%
$3)\Longrightarrow 4):$ Suppose $A_{T}:E\rightarrow $ $\Pi _{q}(F;G)$ is $p$%
-summing. Then, $A_{T}$ is Lipschitz $p$-summing, By Theorem $\ref%
{equiTheorem},$ $B_{T}:F\rightarrow \Pi _{p}^{L}(E;F)$ is $q$-summing. It
suffices to show that $B_{T}\left( e\right) \in \Pi _{p}(E;F)$ for every $%
e\in F.$ Indeed, since $T$ is bilinear then $B_{T}\left( e\right) $ is
linear. Thus, we deduce that $B_{T}\left( e\right) $ is $p$-summing, and%
\begin{equation*}
\pi _{p}(A_{T})=\pi _{p}^{L}(A_{T})=\pi _{q}(B_{T}).
\end{equation*}

$4)\Longrightarrow 1):$ Let $x,y\in E$ and $e\in F.$ Since $B_{T}\left(
e\right) $ is $p$-summing, 
\begin{equation*}
\left\Vert T(x,e)\right\Vert =\left\Vert B_{T}\left( e\right) (x)\right\Vert
\leq \pi _{p}\left( B_{T}\left( e\right) \right) \left( \int_{B_{E^{\ast
}}}\left\vert x^{\ast }(x)\right\vert ^{p}d\mu \right) ^{\frac{1}{p}}
\end{equation*}%
and as $B_{T}$ is $q$-summing, we obtain%
\begin{equation*}
\left\Vert T(x,e)\right\Vert \leq \pi _{q}\left( B_{T}\right) \left(
\int_{B_{E^{\ast }}}\left\vert x^{\ast }(x)\right\vert ^{p}d\mu \right) ^{%
\frac{1}{p}}\left( \int_{B_{F^{\ast }}}\left\vert e^{\ast }(e)\right\vert
^{q}d\eta \right) ^{\frac{1}{q}}
\end{equation*}%
Then, $T$ is dominated $(p,q)$-summing, and%
\begin{equation*}
\pi _{s;p,q}(T)\leq \pi _{q}(B_{T}).
\end{equation*}
\end{proof}

\begin{proposition}[Dvoretzky-Rogers Theorem]
$\Delta _{\left( p,q\right) }^{LipL}(E\times E;E)=LipL_{0}(E\times E;E)$ if
and only if $E$ is a finite-dimensional space.
\end{proposition}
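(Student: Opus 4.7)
The plan is to prove both implications by combining the identifications in Proposition 2.3 and Theorem~\ref{equiTheorem} with the classical linear Dvoretzky--Rogers theorem.

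For the forward direction, assume the equality $\Delta _{(p,q)}^{LipL}(E\times E;E)=LipL_{0}(E\times E;E)$. From the inclusion $\mathcal{B}(E\times E;E)\subset LipL_{0}(E\times E;E)$ in (\ref{22}), every continuous bilinear $T:E\times E\rightarrow E$ is dominated $(p,q)$-summing, and the corollary immediately preceding the proposition translates this into $A_{T}\in \Pi _{p}(E;\Pi _{q}(E;E))$. I would specialize to rank-one bilinear test maps: fix an arbitrary $v\in \mathcal{L}(E;E)$, a nonzero $e^{\ast }\in E^{\ast }$, and $y_{0}\in E$ with $e^{\ast }(y_{0})=1$, and take $T(x,y):=e^{\ast }(y)v(x)$. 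Then $A_{T}(x)=v(x)\otimes e^{\ast }$ factors as $\iota _{e^{\ast }}\circ v$, where $\iota _{e^{\ast }}:z\mapsto z\otimes e^{\ast }$ is a bounded linear embedding, while evaluation at $y_{0}$ recovers $v=E_{y_{0}}\circ A_{T}$ (with $E_{y_{0}}$ the bounded evaluation map on $\mathcal{L}(E;E)$). The ideal property of $\Pi _{p}$ therefore gives $v\in \Pi _{p}(E;E)$. Since $v$ was arbitrary, $\mathcal{L}(E;E)=\Pi _{p}(E;E)$, and the classical Dvoretzky--Rogers theorem forces $\dim E<\infty $.

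For the converse, assume $\dim E<\infty $ and let $T\in LipL_{0}(E\times E;E)$. By Proposition~2.3, $T$ corresponds to $B_{T}\in \mathcal{L}(E;Lip_{0}(E;E))$, and by Theorem~\ref{equiTheorem} it suffices to verify $B_{T}\in \Pi _{q}(E;\Pi _{p}^{L}(E;E))$. The argument relies on two ingredients. First, an auxiliary lemma: every Lipschitz operator $R:X\rightarrow F$ from a pointed metric space $X$ into a finite-dimensional Banach space $F$ is Lipschitz $p$-summing. To prove it, choose a basis $\{f_{j}\}_{j=1}^{m}$ of $F$ with dual basis $\{f_{j}^{\ast }\}$, write $R=\sum _{j}R_{j}f_{j}$ with $R_{j}:=f_{j}^{\ast }\circ R\in X^{\#}$, note that each normalized $R_{j}/Lip(R_{j})\in B_{X^{\#}}$ makes the Lipschitz $p$-summing inequality trivial for the scalar component, and then combine the components using the equivalence of norms on the finite-dimensional space $F$. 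Applied with $X=F=E$, this gives $B_{T}(e)\in \Pi _{p}^{L}(E;E)$ for every $e$. Second, since $E$ is finite-dimensional and $B_{E^{\ast }}$ is compact, every bounded linear operator out of $E$ is $q$-summing by a standard Pietsch-measure argument (any full-support probability measure on $B_{E^{\ast }}$ produces the required domination, because the resulting $L_{q}$-seminorm is equivalent to the norm on the finite-dimensional domain). Hence $B_{T}\in \Pi _{q}(E;\Pi _{p}^{L}(E;E))$, and Theorem~\ref{equiTheorem} yields $T\in \Delta _{(p,q)}^{LipL}(E\times E;E)$.

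The technical core of the argument is the auxiliary lemma that Lipschitz maps into finite-dimensional targets are automatically Lipschitz $p$-summing; once this is isolated, both directions reduce cleanly to the classical linear Dvoretzky--Rogers theorem, the forward implication via rank-one bilinear test maps that pull the conclusion back to the linear operator $v$, and the converse via the linear-Lipschitz decomposition of Theorem~\ref{equiTheorem}. I expect the lemma, though short, to be the only step that is not a direct citation of previously established material, and hence the main substantive obstacle.
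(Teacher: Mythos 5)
Your proof is correct and follows essentially the same route as the paper: both directions reduce to the classical linear Dvoretzky--Rogers theorem through the equivalences of Theorem \ref{equiTheorem} and its bilinear corollary, using a rank-one bilinear test map for necessity and finite-dimensionality for sufficiency. The only differences are cosmetic: the paper's test map $T(e_1,e_2)=\langle e_0^{\ast},e_1\rangle e_2$ extracts $id_E\in \Pi _q(E;E)$ from the second coordinate rather than $v\in \Pi _p(E;E)$ from the first, and your auxiliary lemma (Lipschitz maps into finite-dimensional targets are Lipschitz $p$-summing) makes explicit a step that the paper's converse simply asserts.
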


\begin{proof}
Let $e_{0}\in E$ with $\left\Vert e_{0}\right\Vert =1,$ and let $e_{0}^{\ast
}\in B_{E^{\ast }}$ satisfy $\left\langle e_{0}^{\ast },e_{0}\right\rangle
=\left\Vert e_{0}\right\Vert $. Define $T_{e_{0}^{\ast }}:E\times
E\rightarrow E$ by 
\begin{equation*}
T(e_{1},e_{2})=\left\langle e_{0}^{\ast },e_{1}\right\rangle e_{2}.
\end{equation*}%
It follows that%
\begin{equation*}
T\in \mathcal{B}(E\times E;E)\subset LipL_{0}(E\times E;E)=\Delta _{\left(
p,q\right) }^{LipL}(E\times E;E).
\end{equation*}%
For every $e\in E$, we have 
\begin{equation*}
A_{T}\left( e_{0}\right) \left( e\right) =T(e_{0},e)=\left\langle
e_{0}^{\ast },e_{0}\right\rangle e=id_{E}\left( e\right) ,
\end{equation*}%
By Theorem $\ref{equiTheorem}$, $A_{T}\left( e_{0}\right) $ is $q$-summing,
which implies $id_{E}\in \Pi _{q}\left( E;E\right) .$ By the
Dvoretzky-Rogers Theorem, this implies that $E$ is a finite-dimensional.
Conversely, suppose that $T\in LipL_{0}(E\times E;E)$ and $E$ is
finite-dimensional. Then, $A_{T}:E\rightarrow \mathcal{L}(E;E)$ is Lipschitz 
$p$-summing and for every $e\in E,$ $A_{T}\left( e\right) :E\rightarrow E$
is $q$-summing. According to Theorem $\ref{equiTheorem}$, $T$ is dominated $%
\left( p,q\right) $-summing.
\end{proof}

\end{document}